\newtheorem{theorem}{Theorem}[section]
\newtheorem*{theorem*}{Theorem}
\newtheorem{lemma}[theorem]{Lemma}
\newtheorem{corollary}[theorem]{Corollary}
\newtheorem{proposition}[theorem]{Proposition}
\newtheorem{definition}[theorem]{Definition}
\newtheorem{example}[theorem]{Example}
\newtheorem{remark}[theorem]{Remark}
\newtheorem{claim}{Claim}[theorem]
\newcommand{\R}{\mathbb{R}}
\newcommand{\C}{\mathbb{C}}
\newcommand{\N}{\mathbb{N}}
\begin{document}

\title[Classification of algebraic curves under blow-spherical equiv.]
{Classification of complex algebraic curves under blow-spherical equivalence}

\author[J. E. Sampaio]{Jos\'e Edson Sampaio}
\author[E. C. da Silva]{Euripedes Carvalho da Silva}

\address{Jos\'e Edson Sampaio:  
              Departamento de Matem\'atica, Universidade Federal do Cear\'a,
	      Rua Campus do Pici, s/n, Bloco 914, Pici, 60440-900, 
	      Fortaleza-CE, Brazil. \newline  
              E-mail: {\tt edsonsampaio@mat.ufc.br}                    
}

\address{Euripedes Carvalho da Silva: Departamento de Matem\'atica, Instituto Federal de Educa\c{c}\~ao, Ci\^encia e Tecnologia do Cear\'a,
 	      Av. Parque Central, 1315, Distrito Industrial I, 61939-140, 
 	      Maracana\'u-CE, Brazil. \newline 
 	      and 	      Departamento de Matem\'atica, Universidade Federal do Cear\'a,
	      Rua Campus do Pici, s/n, Bloco 914, Pici, 60440-900, 
	      Fortaleza-CE, Brazil.
               E-mail: {\tt euripedes.carvalho@ifce.edu.br}
 } 

\thanks{The first named author was partially supported by CNPq-Brazil grant 310438/2021-7. This work was supported by the Serrapilheira Institute (grant number Serra -- R-2110-39576).
}
\keywords{Blow-spherical geometry; Lipschitz geometry; Algebraic sets; Classification of algebraic curves}
\subjclass[2010]{14R05;  32S20;  14B05; 32S50}

\begin{abstract}
This article is devoted to studying complex algebraic sets under (global) blow-spherical equivalence. This equivalence lives strictly between semialgebraic bi-Lipschitz equivalence  and topological equivalence.  
The main results of this article are complete classifications of complex algebraic curves. Firstly, we present a complete classification of complex algebraic curves under blow-spherical homeomorphisms at infinity and, then, we present a complete classification of complex algebraic curves under (global) blow-spherical homeomorphisms. For the classification at infinity we also present a classification with normal forms. We also present several properties of the blow-spherical equivalence. For instance, we prove that the degree of curves is preserved under blow-spherical homeomorphisms at infinity. Another property presented here is a Bernstein-type result which says that a pure dimensional complex algebraic set which is blow-spherical homeomorphic at infinity to a Euclidean space must be an affine linear subspace.

\end{abstract}

\maketitle

\tableofcontents

\section{Introduction}

One of the most natural and important problems in mathematics is the problem of classifying objects into a given category. For instance, an important result of classification is the classification of compact (without boundary) smooth surfaces under diffeomorphisms. It is well known the following: {\it Let $X\subset \R^n$ and $Y\subset \R^m$ be two connected smooth (without boundary) compact surfaces. Then the following statements are equivalent:
	\begin{itemize}
	 \item [(1)] $X$ and $Y$ are diffeomorphic;
	 \item [(2)] $X$ and $Y$ are (inner or outer) lipeomorphic (see the definition of lipeomorphism in Definition \ref{lipschitz function});
	 \item [(3)] $\theta(X)=\theta(Y)$ and $g(X)=g(Y)$,
	\end{itemize}}
\noindent where $g(M)$ denotes the genus of $M$, $\theta(M)=1$ if $M$ is orientable and $\theta(M)=-1$ if $M$ is non-orientable.

In the setting of (not necessarily compact) properly embedded smooth surfaces in $\R^n$, the problem of classifying is much harder and changes drastically, as it is shown in the next example. 

\begin{example}
	Let $X=\R^2$, $Y=\{(x,y,z)\in \R^3;x^2+y^2=1\}$ and $Z=\{(x,y,z)\in \R^3;z=x^2+y^2\}$. 
	\begin{itemize}
	 \item [a)] $\theta(X)=\theta(Y)$, $g(X)=g(Y)$, but $X$ and $Y$ are not homeomorphic;
	 \item [(b)] $X$ and $Z$ are diffeomorphic, but they are not inner lipeomorphic;
	\end{itemize}
\end{example} 

If we allow singularities then the problem of classifying is even harder.

In order to have some control on the topology of such surfaces, let us assume they are semialgebraic. Since compact manifolds (without boundary) are diffeomorphic to semialgebraic ones (see \cite{Nash}), in some sense that assumption is not too restrictive. With that assumption on the surfaces, the first author and Fernandes in \cite{FernandesS:2022} presented a complete classification of all semialgebraic surfaces with isolated singularities under inner lipeomorphisms.

Since there is no local classification of semialgebraic surfaces under outer lipeomorphisms, we believe that we are far away from obtaining a global classification of such surfaces under outer lipeomorphisms.
However, any semialgebraic outer lipeomorphism is a blow-spherical homeomorphism (see Definition \ref{def:bs_homeomorphism} and Proposition \ref{Lip_implies_bs}), thus an intermediate step in the problem of classifying complex algebraic surfaces or semialgebraic surfaces under outer lipeomorphisms is to classify algebraic surfaces under blow-spherical homeomorphisms. In order to learn more about the local properties of the blow-spherical equivalence see \cite{BirbrairFG:2012}, \cite{BirbrairFG:2017}, \cite{Sampaio:2015}, \cite{Sampaio:2020a}, \cite{Sampaio:2020} and \cite{Sampaio:2021b}. 

The problem of classification of the complex algebraic curves under outer lipeomorphisms was recently done by Targino in \cite{Targino:2022}. This is an important result alone, but it is also important to classify complex algebraic surfaces under outer lipeomorphisms, since an outer lipeomorphism between two complex algebraic surfaces induces an outer lipeomorphism between their singular sets (see \cite{Sampaio:2016} and \cite{BirbrairFLS:2016}), which are points or complex algebraic curves. Let us remark that there are algebraic curves which are blow-spherical homeomorphic, but are not outer lipeomorphic (see Example \ref{main_example}). But the blow-spherical equivalence is still a strong enough equivalence which is capable to catch singular points. In fact, it follows from \cite{Sampaio:2020} that a blow-spherical homeomorphism between two complex algebraic surfaces induces a blow-spherical homeomorphism between their singular sets.

So, it becomes natural to try classifying complex algebraic curves under blow-spherical homeomorphisms and the main aim of this article is to present such a classification (see Theorems \ref{thmequivinfinity} and \ref{main_theorem}). More precisely, we present two classifications: one under blow-spherical homeomorphisms at infinity (see Theorem \ref{thmequivinfinity}) and one under (global) blow-spherical homeomorphisms (see Theorem \ref{main_theorem}). Moreover, for the classification under blow-spherical homeomorphisms at infinity, we also present normal forms (see Theorem \ref{thm:normal_forms}).

It is important to say that the classifications presented here hold true for spacial algebraic curves, not only for plane algebraic curves as it was done in \cite{Targino:2022}. Moreover, since there are spacial algebraic curves which are not blow-spherical homeomorphic to any plane algebraic curve (see Remark \ref{rem:plane_spacial_are_not-same}), the problem of classifying spacial algebraic curves is harder than the problem of classifying plane algebraic curves.

In Section \ref{sec:app_lne}, as consequences of the results in \cite{Targino:2022} and \cite{FernandesS:2022}, we present some results related to LNE sets (see Definition \ref{def:lne}). For instance, a complex algebraic curves is LNE if and only if it is blow-spherical homeomorphic to a LNE complex algebraic curve (see Proposition \ref{prop:charac_bs_lne}); and two LNE complex algebraic curves blow-spherical homeomorphic if and only if they are outer lipeomorphic (see Corollary \ref{cor:bs_lipeo}).

\bigskip

\section{Preliminaries}\label{section:cones}
Here, we assume that all the algebraic sets are of pure dimension.
\subsection{Tangent Cones}
Let $X\subset \R^{n+1}$ be an unbounded semialgebraic set (resp. subanalytic set with $p\in \overline{X}$).
We say that $v\in \R^{n+1}$ is a tangent vector of $X$ at infinity (resp. $p$) if there are a sequence of points $\{x_i\}\subset X$ tending to infinity (resp. $p$) and a sequence of positive real numbers $\{t_i\}$ such that 
$$\lim\limits_{i\to \infty} \frac{1}{t_i}x_i= v \quad (\mbox{resp. } \lim\limits_{i\to \infty} \frac{1}{t_i}(x_i-p)= v).$$
Let $C(X,\infty)$ (resp. $C(X,p)$) denote the set of all tangent vectors of $X$ at infinity (resp. $p$). We call $C(X,\infty)$ the {\bf tangent cone of $X$ at infinity} (resp. $p$).

\begin{remark}
If $A\subset \C^n$ is a complex algebraic set and $p\in A\cup \{\infty\}$ then $C(A,p)$ is the zero set of a set of homogeneous polynomials (see \cite[p. 84, Proposition 2]{Chirka:1989}, \cite[Theorem 1.1]{LeP:2018} and \cite[Theorem 3.1]{Sampaio:2023}). In particular, $C(A,p)$ is a union of complex lines passing through at the origin.
\end{remark}

We have the following characterization.
\begin{corollary}[Corollary 2.16 \cite{FernandesS:2020}]\label{corollary 2.16FernandesS:2020}
Let $X\subset \R^n$ be an unbounded semialgebraic set. Then  
$C(X,\infty)=\{v\in\R^n;\, \exists \gamma:(\varepsilon,+\infty )\to Z$ $C^0$ semialgebraic such that $\lim\limits _{t\to +\infty }|\gamma(t)|=+\infty$ and $\gamma(t)=tv+o_{\infty }(t)\}$, where $g(t)=o_{\infty }(t)$ means $\lim\limits_{t\to +\infty}\frac{g(t)}{t}=0$.
\end{corollary}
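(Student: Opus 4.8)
The plan is to prove the two inclusions separately, the inclusion ``$\supseteq$'' being immediate and the inclusion ``$\subseteq$'' being the substantive one; throughout I read the target of $\gamma$ as $X$. For ``$\supseteq$'', given a $C^0$ semialgebraic $\gamma:(\varepsilon,+\infty)\to X$ with $|\gamma(t)|\to+\infty$ and $\gamma(t)=tv+o_\infty(t)$, I would simply pick any sequence $t_i\to+\infty$ with $t_i>\varepsilon$, put $x_i:=\gamma(t_i)\in X$ (which tends to infinity because $|\gamma(t)|\to+\infty$), and note that $\frac{1}{t_i}x_i=v+\frac{1}{t_i}o_\infty(t_i)\to v$, so $v\in C(X,\infty)$.

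For ``$\subseteq$'', fix $v\in C(X,\infty)$, realized by $x_i\in X$ with $|x_i|\to+\infty$ and $t_i>0$ with $\frac{1}{t_i}x_i\to v$, and treat the cases $v\neq 0$ and $v=0$ separately. Assume first $v\neq 0$; then $\frac{|x_i|}{t_i}\to|v|>0$, hence $\frac{x_i}{|x_i|}\to \frac{v}{|v|}=:u\in S^{n-1}$. Introducing the variable $s=1/|x|$, consider the semialgebraic set $\widehat{X}:=\{(\theta,s)\in S^{n-1}\times(0,1):\frac{1}{s}\theta\in X\}$; the hypothesis says precisely that $(u,0)\in\overline{\widehat{X}}$. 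Applying the classical curve selection lemma at $(u,0)$ yields a $C^0$ semialgebraic arc $\sigma\mapsto(\theta(\sigma),s(\sigma))$, $\sigma\in[0,\eta)$, with $(\theta(0),s(0))=(u,0)$ and $(\theta(\sigma),s(\sigma))\in\widehat{X}$, $s(\sigma)>0$, for $\sigma\in(0,\eta)$. Then $\gamma_0(\sigma):=\frac{1}{s(\sigma)}\theta(\sigma)$ is a $C^0$ semialgebraic curve in $X$ with $|\gamma_0(\sigma)|=\frac{1}{s(\sigma)}\to+\infty$ and $\frac{\gamma_0(\sigma)}{|\gamma_0(\sigma)|}=\theta(\sigma)\to u$ as $\sigma\to 0^+$. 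It remains to reparametrize onto $(\varepsilon,+\infty)$ so that the leading term is exactly $tv$: since $s$ is semialgebraic with $s(0^+)=0<s(\sigma)$ it is strictly increasing near $0$, hence has a semialgebraic inverse $s^{-1}$, and I would set $\gamma(t):=\gamma_0\!\big(s^{-1}(1/(|v|t))\big)$ for $t$ large. By construction $|\gamma(t)|=|v|t$ and $\frac{\gamma(t)}{|\gamma(t)|}\to u$, so $\gamma(t)=|v|t\big(\frac{v}{|v|}+o_\infty(1)\big)=tv+o_\infty(t)$, and $\gamma$ is a $C^0$ semialgebraic curve in $X$ of the required form.

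For the case $v=0$ (which occurs for every unbounded $X$), I only need a $C^0$ semialgebraic curve $\delta:(\varepsilon,+\infty)\to X$ with $|\delta(t)|\to+\infty$ and $|\delta(t)|=o_\infty(t)$. The existence of some $\delta$ with $|\delta(t)|\to+\infty$ is the curve selection lemma at infinity, which follows from the classical one exactly as above (applying it to $\widehat X$ with $u\in S^{n-1}$ left unconstrained, or equivalently via the inversion $x\mapsto x/|x|^2$); writing the Puiseux expansion at infinity $|\delta(t)|=ct^q+o_\infty(t^q)$ with $c>0$ and $q\in\Q_{>0}$, the reparametrization $\tau\mapsto\delta(\tau^{1/(q+1)})$ has norm $\sim c\,\tau^{q/(q+1)}$, which tends to infinity but is $o_\infty(\tau)$ since $q/(q+1)<1$. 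I expect the main obstacle to be the case $v\neq 0$, specifically the step of upgrading the sequential statement ``$v$ is a tangent vector'' into a genuine semialgebraic arc tangent to $v$: this forces one to pass to finite scale (here through the spherical blow-up encoded in $\widehat X$, or equivalently by inversion) in order to invoke the curve selection lemma, and then to use o-minimal monotonicity and Puiseux expansions to carry out the reparametrizations that normalize the norm; once this is arranged, the rest is routine bookkeeping.
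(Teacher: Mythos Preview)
The paper does not prove this statement; it is quoted verbatim as Corollary~2.16 of \cite{FernandesS:2020} and used as a black box, so there is no proof in the paper to compare against. That said, your argument is correct and is the standard one: the inclusion $\supseteq$ is immediate from the definition, and for $\subseteq$ with $v\neq 0$ you pass to a bounded model via the spherical blow-up $\widehat X\subset S^{n-1}\times(0,1)$, invoke the semialgebraic curve selection lemma at $(v/|v|,0)$, and then use o-minimal monotonicity of $s(\sigma)$ to reparametrize so that $|\gamma(t)|=|v|t$. The case $v=0$ is handled by taking any semialgebraic arc in $X$ going to infinity and slowing it down via a power reparametrization so that its norm grows sublinearly; this is also correct (note that $q\in\mathbb{Q}_{>0}$ ensures $\tau\mapsto\tau^{1/(q+1)}$ is semialgebraic). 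One cosmetic point: you are right to read the target of $\gamma$ as $X$; the ``$Z$'' in the displayed statement is a typographical slip carried over from the cited paper.
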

%

Thus, we have the following 
\begin{corollary}[Corollary 2.18 \cite{FernandesS:2020}]\label{dimension_tg_cone}
Let $Z\subset \R^n$ be an unbounded semialgebraic set. Let $\phi:\R^n\setminus\{0\}\to \R^n\setminus\{0\}$ be the semialgebraic mapping given by $\phi(x)=\frac{x}{\|x\|^2}$ and denote $X=\phi(Z\setminus \{0\})$. Then $C(Z,\infty)$ is a semialgebraic set satisfying $C(Z,\infty)=C(X,0)$ and $\dim_{\R} C(Z,\infty) \leq \dim_{\R} Z$.
\end{corollary}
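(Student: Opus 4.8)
The plan is to reduce the assertion about $C(Z,\infty)$ to the corresponding assertion about the tangent cone at the origin of the inverted set $X$, exploiting that the inversion $\phi$ is a semialgebraic involution which carries a punctured neighbourhood of $0$ onto a neighbourhood of infinity. First observe that $\phi$ is a semialgebraic diffeomorphism of $\R^n\setminus\{0\}$ onto itself with $\phi\circ\phi=\mathrm{id}$ and $\phi(X)=Z\setminus\{0\}$, so $X$ is semialgebraic; and, since $Z$ is unbounded, $X$ accumulates at $0$ (if $z_i\in Z$ with $z_i\to\infty$, then $\phi(z_i)\to 0$). In particular $\dim(X\setminus\{0\})=\dim(Z\setminus\{0\})\le\dim_\R Z$, because $\phi$ is a semialgebraic homeomorphism of $\R^n\setminus\{0\}$.

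The core step is the equality $C(Z,\infty)=C(X,0)$, which I would prove by a direct sequence argument. Since $Z$ is unbounded one has $0\in C(Z,\infty)$ (take $z_i\to\infty$ and $t_i=\|z_i\|^2$), and likewise $0\in C(X,0)$, so it suffices to match the nonzero tangent vectors. Let $v\in C(Z,\infty)$ with $v\neq 0$, and pick $x_i\in Z$ with $x_i\to\infty$ and $t_i>0$ with $x_i/t_i\to v$; write $x_i=t_iv+w_i$ with $w_i/t_i\to 0$, so $t_i\to\infty$ and $\|x_i\|^2=t_i^2(\|v\|^2+o(1))$. Then $\phi(x_i)=\dfrac{x_i}{\|x_i\|^2}=\dfrac1{t_i}\cdot\dfrac{v+o(1)}{\|v\|^2+o(1)}\to 0$, and setting $s_i=\dfrac1{t_i\|v\|^2}>0$ gives $\phi(x_i)/s_i\to v$; since $\phi(x_i)\in X$, this yields $v\in C(X,0)$. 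The reverse inclusion follows by running the same computation in the other direction, using $\phi\circ\phi=\mathrm{id}$, $\phi(X)=Z\setminus\{0\}$, and the fact that $\phi$ carries a sequence tending to $0$ to a sequence tending to infinity (alternatively, one may invoke the curve characterization of Corollary \ref{corollary 2.16FernandesS:2020} to produce the approximating semialgebraic arc). Hence $C(Z,\infty)=C(X,0)$.

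For semialgebraicity and the dimension bound I would pass to the spherical blow-up $\beta\colon S^{n-1}\times[0,+\infty)\to\R^n$, $\beta(\omega,r)=r\omega$. Put $A=\beta^{-1}(X\setminus\{0\})\subset S^{n-1}\times(0,+\infty)$, let $\widetilde X=\overline{A}$ be its closure in $S^{n-1}\times[0,+\infty)$, and set $L=\widetilde X\cap(S^{n-1}\times\{0\})$. Since $\beta$ restricts to a semialgebraic diffeomorphism $S^{n-1}\times(0,+\infty)\to\R^n\setminus\{0\}$, the set $\widetilde X$ is semialgebraic with $\dim\widetilde X=\dim(X\setminus\{0\})$, and $L$ lies in the topological frontier $\overline{A}\setminus A$, whence $\dim L\le\dim(X\setminus\{0\})-1$. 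A routine check with the defining sequences shows $C(X,0)=\{r\omega:\ r\ge 0,\ (\omega,0)\in L\}$, i.e. $C(X,0)$ is the cone over $L$; in particular $C(X,0)$ is semialgebraic and $\dim C(X,0)\le\dim L+1\le\dim(X\setminus\{0\})$. Combining with the first paragraph, $C(Z,\infty)=C(X,0)$ is semialgebraic and $\dim_\R C(Z,\infty)=\dim_\R C(X,0)\le\dim_\R Z$.

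I expect the only real obstacle to be bookkeeping: one must track the rescaling factor $s_i=1/(t_i\|v\|^2)$ carefully when transporting a tangent direction through the inversion and dispose of the direction $v=0$ separately, and one must invoke the standard fact that the frontier of a semialgebraic set has strictly smaller dimension in order to conclude $\dim C(X,0)\le\dim X$. No genuinely new difficulty arises beyond these points.
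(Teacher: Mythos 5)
Your argument is correct. Note, however, that the paper does not prove this statement at all: it is imported verbatim as Corollary~2.18 of \cite{FernandesS:2020}, so there is no in-paper proof to compare against. Your route --- transporting tangent directions through the inversion $\phi$ by the explicit rescaling $s_i=1/(t_i\|v\|^2)$, and then getting semialgebraicity and the dimension bound from the spherical blow-up together with the fact that the frontier of a semialgebraic set has strictly smaller dimension --- is the standard one and matches the mechanism the cited reference relies on (their Corollary~2.16, quoted here as Corollary~\ref{corollary 2.16FernandesS:2020}, is exactly the arc characterization you mention as an alternative for the reverse inclusion).
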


Another way to present the tangent cone at infinity (resp. $p$) of a subset $X\subset\R^{n+1}$ is via the spherical blow-up at infinity (resp. $p$) of $\R^{n+1}$. Let us consider the {\bf spherical blowing-up at infinity} (resp. $p$) of $\R^{n+1}$, $\rho_{\infty}\colon\mathbb{S}^n\times (0,+\infty )\to \R^{n+1}$ (resp. $\rho_p\colon\mathbb{S}^n\times [0,+\infty )\to \R^{n+1}$), given by $\rho_{\infty}(x,r)=\frac{1}{r}x$ (resp. $\rho_p(x,r)=rx+p$).

Note that $\rho_{\infty}\colon\mathbb{S}^n\times (0,+\infty )\to \R^{n+1}\setminus \{0\}$ (resp. $\rho_p\colon\mathbb{S}^n\times (0,+\infty )\to \R^{n+1}\setminus \{0\}$) is a homeomorphism with inverse mapping $\rho_{\infty}^{-1}\colon\R^{n+1}\setminus \{0\}\to \mathbb{S}^n\times (0,+\infty )$ (resp. $\rho_p \colon\mathbb{S}^n\times (0,+\infty )\to \R^{n+1}\setminus \{0\}$) given by $\rho_{\infty}^{-1}(x)=(\frac{x}{\|x\|},\frac{1}{\|x\|})$ (resp. $\rho_p^{-1}(x)=(\frac{x-p}{\|x-p\|},\|x-p\|)$).

 The {\bf strict transform} of the subset $X$ under the spherical blowing-up $\rho_{\infty}$ is $X'_{\infty}:=\overline{\rho_{\infty}^{-1}(X\setminus \{0\})}$ (resp. $X'_{p}:=\overline{\rho_{p}^{-1}(X\setminus \{0\})}$). The subset $X_{\infty}'\cap (\mathbb{S}^n\times \{0\})$ (resp. $X_{p}'\cap (\mathbb{S}^n\times \{0\})$) is called the {\bf boundary} of $X'_{\infty}$ (resp. $X'_p$) and it is denoted by $\partial X'_{\infty}$ (resp. $\partial X'_p$). 

\begin{remark}\label{remarksimplepointcone}
{\rm If $X\subset \R^{n+1}$ is a semialgebraic set, then $\partial X'_{\infty}=(C(X,\infty)\cap \mathbb{S}^n)\times \{0\}$ (resp. $\partial X'_{p}=(C(X,p)\cap \mathbb{S}^n)\times \{0\}$).}
\end{remark}

\subsection{Relative multiplicities}

Let $X\subset \R^{m+1}$ be a $d$-dimensional subanalytic subset and $p\in \R^{m+1}\cup \{\infty\}$. We say $x\in\partial X'_p$ is {\bf a simple point of $\partial X'_p$}, if there is an open subset $U\subset \R^{m+2}$ with $x\in U$ such that:
\begin{itemize}
\item [a)] the connected components $X_1,\cdots , X_r$ of $(X'_p\cap U)\setminus \partial X'_{p}$ are topological submanifolds of $\R^{m+2}$ with $\dim X_i=\dim X$, for all $i=1,\cdots,r$;
\item [b)] $(X_i\cup \partial X'_{p})\cap U$ are topological manifolds with boundary, for all $i=1,\cdots ,r$. 
\end{itemize}
Let ${\rm Smp}(\partial X'_{p})$ be the set of simple points of $\partial X'_{p}$ and we define $C_{\rm Smp}(X,p)=\{t\cdot x; \, t>0\mbox{ and }x\in {\rm Smp}(\partial X'_{p})\}$. Let $k_{X,p}\colon {\rm Smp}(\partial X'_{p})\to \N$ be the function such that $k_{X,p}(x)$ is the number of connected components of the germ $(\rho_{p}^{-1}(X\setminus\{p\}),x)$. 

\begin{remark}\label{remarksimplepointdense}
	${\rm Smp}(\partial X'_p)$ is an open dense subset of the $(d-1)$-dimensional part of $\partial X'_p$ whenever $\partial X'_p$ is a $(d-1)$-dimensional subset, where $d=\dim X$. 
\end{remark} 
\begin{definition}\label{def:relative_mult}
It is clear the function $k_{X,p}$ is locally constant. In fact, $k_{X,p}$ is constant on each connected component $C_j$ of ${\rm Smp}(\partial X'_{p})$. Then, we define {\bf the relative multiplicity of $X$ at $p$ (along of $C_j$)} to be $k_{X,p}(C_j):=k_{X,p}(x)$ with $x\in C_j$. 
Moreover, when $X$ is a complex algebraic set, there is a complex algebraic set $\sigma$ with $\dim \sigma <\dim X$, such that $X_j\setminus \sigma$ intersect only one connected component $C_i$ (see \cite{Chirka:1989}, pp. 132-133), for each irreducible component $X_j$ of tangent cone $C(X,p)$. Then we define also $k_{X,p}(X_j):=k_{X,p}(C_i)$.
Let $X_1,...,X_r$ be the irreducible components of $C(X,p)$. By reordering indices, if necessary, we assume that $k_{X,p}(X_1)\leq \cdots \leq k_{X,p}(X_r)$. Then we define $k(X,p)=(k_{X,p}(X_1),...,k_{X,p}(X_r))$.
\end{definition}

%

%

\subsection{Degree of complex algebraic sets}

\begin{definition}
Let $X\subset \mathbb{C}^n$ be a pure $p$-dimensional complex algebraic set such that and
let $L\in G(n-p,n)$ such that $L\cap C(Y ,\infty) = \{0\}$. Let $\Pi \colon \mathbb{C}^n\rightarrow \mathbb{C}^p$ be the orthogonal projection such that $L=\Pi^{-1}(0)$. Therefore, there exists a proper algebraic subset $\sigma\subset \mathbb{C}^p$ such that $d=\#(\Pi^{-1}(t)\cap X)$ does not depend on $t\in \C^p\setminus \sigma$. We highlight that, $d$ does not depend also on $L$. Thus, we define {\bf the degree of $X$} to be ${\rm deg}(X)=d$.
\end{definition}

\begin{proposition}[Proposition 3.3 in \cite{FernandesS:2020}]\label{propdegone}
Let $X\subset \mathbb{C}^n$ be a pure dimensional algebraic subset. Then, $\deg(X)=1$ if and only if $X$ is an affine linear subspace of $\mathbb{C}^n$.
\end{proposition}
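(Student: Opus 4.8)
The plan is to treat the two implications separately. The implication ``$X$ affine linear $\Rightarrow \deg(X)=1$'' is immediate: if $X$ is an affine linear subspace of dimension $p$, then $C(X,\infty)$ is the linear subspace $V$ parallel to $X$, and for any $L\in G(n-p,n)$ with $L\cap V=\{0\}$ the restriction $\Pi|_X$ of the associated orthogonal projection is an affine isomorphism $X\to\mathbb{C}^p$, so each fibre $\Pi^{-1}(t)\cap X$ is a single point and $\deg(X)=1$.

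For the converse, assume $\deg(X)=1$, write $p=\dim X$, and fix $L\in G(n-p,n)$ with $L\cap C(X,\infty)=\{0\}$ together with the orthogonal projection $\Pi\colon\mathbb{C}^n\to\mathbb{C}^p$ with $\ker\Pi=L$. I would first note that $X$ is irreducible: were $X$ to have two distinct $p$-dimensional irreducible components, a generic fibre of $\Pi|_X$ would meet each of them and one would get $\#(\Pi^{-1}(t)\cap X)\geq 2$. Next, the hypothesis $L\cap C(X,\infty)=\{0\}$ forces $\Pi|_X$ to be proper: if $\{x_i\}\subset X$ had $\|x_i\|\to\infty$ while $\{\Pi(x_i)\}$ stayed bounded, a subsequential limit of $x_i/\|x_i\|$ would be a unit vector in $L\cap C(X,\infty)$. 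Since $\Pi|_X$ is also quasi-finite, it is a finite morphism onto $\mathbb{C}^p$, and since its generic fibre consists of a single point it is birational.

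Now I would invoke that $\mathbb{C}[z_1,\dots,z_p]$ is integrally closed in its fraction field: the coordinate ring $\mathbb{C}[X]$ is module-finite over $\Pi^*\mathbb{C}[z_1,\dots,z_p]$ and contained in the common function field, hence it equals $\Pi^*\mathbb{C}[z_1,\dots,z_p]$, so $\Pi|_X$ is an isomorphism (this is the instance of Zariski's Main Theorem needed here). After a unitary change of coordinates sending $L$ to $\{0\}\times\mathbb{C}^{n-p}$, the inverse of $\Pi|_X$ exhibits $X$ as the graph $X=\{(w,f(w)):w\in\mathbb{C}^p\}$ of a polynomial map $f=(f_{p+1},\dots,f_n)\colon\mathbb{C}^p\to\mathbb{C}^{n-p}$.

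It remains to show that $f$ is affine, for then $X$ is an affine linear subspace. Suppose not and set $d=\max_{j}\deg f_j\geq 2$; for each $j$ with $\deg f_j=d$ let $h_j$ be the degree-$d$ homogeneous part of $f_j$, and choose $u\in\mathbb{C}^p$ with $h_j(u)\neq 0$ for some such $j$. Along $w=su$ with $s\to+\infty$ (real), rescaling by $t=s^{d}$ gives $\frac{1}{t}(su,f(su))\to v:=(0,\dots,0,h_{p+1}(u),\dots,h_n(u))$ (with the convention $h_j=0$ when $\deg f_j<d$), and $v$ is a nonzero vector of $C(X,\infty)$ lying in $L$, contradicting $L\cap C(X,\infty)=\{0\}$. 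Hence $d\leq 1$, so $f$ is affine and $X$ is an affine linear subspace, completing the proof. The main point to get right is the step from ``generic fibre a single point'' to ``$\Pi|_X$ is an isomorphism''; the rest is the fibre-counting remark, the elementary properness argument, and the short tangent-cone-at-infinity computation. One could alternatively bypass the explicit parametrization by identifying $\deg(X)$ with the degree of the projective closure $\overline{X}\subset\mathbb{P}^n$ and appealing to the classical fact that an irreducible projective variety of degree one is a linear subspace, whence $X=\overline{X}\cap\mathbb{C}^n$ is affine linear.
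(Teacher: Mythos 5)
Your proof is correct. Note first that the paper does not prove this statement at all: it is imported verbatim as Proposition 3.3 of the cited work of Fernandes and Sampaio, so there is no in-paper argument to compare against. Your argument is a sound, self-contained substitute. The forward direction is the routine fibre count. For the converse, the chain ``$\deg=1$ forces irreducibility'', ``$L\cap C(X,\infty)=\{0\}$ forces properness of $\Pi|_X$'', ``finite $+$ birational onto the normal target $\mathbb{C}^p$ forces $\Pi|_X$ to be an isomorphism'', and the final tangent-cone computation showing the graph map $f$ must be affine are all correct; the only points worth making fully explicit are that module-finiteness of $\mathbb{C}[X]$ over $\Pi^*\mathbb{C}[z_1,\dots,z_p]$ follows from properness plus quasi-finiteness of the affine morphism $\Pi|_X$, and that birationality uses characteristic zero to identify the generic fibre cardinality with the degree of the function field extension. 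Your closing alternative --- identifying $\deg(X)$ with the degree of the projective closure and quoting that an irreducible projective variety of degree one is linear --- is in fact closer in spirit to how this statement is usually derived in the literature the paper cites, and is shorter, at the price of invoking a classical projective fact rather than arguing directly in the affine setting.
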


\begin{remark}[\cite{BobadillaFS:2018}]\label{remarkdegree}
 Let $X\subset \mathbb{C}^n$ be a pure dimensional complex algebraic set and  $X_1, \cdots, X_r$ be the irreducible components of $C(X,\infty)$. Then 
 $$\deg(X)=\sum_{j=1}^r{k_{X,\infty}(X_j)\cdot \deg(X_j)}.$$
\end{remark}

\subsection{Ends of semialgebraic sets}
 
For $R>0$ we denote $\mathbb{S}^{n-1}_{R}=\{x\in \mathbb{R}^n; \|x\|=R\}$ and $B^{n}_R=\{x\in \mathbb{R}^n; \|x\|< R\}$. Moreover, if $X\subset \mathbb{R}^n$, we define $\mbox{Cone}_{\infty}(X)=\{tx; x\in X \ \ \mbox{and} \ \ t\in [1,+\infty)\}$.
\begin{proposition}\label{propconicalstru}
 Let $C\subset \mathbb{R}^n$ be a semialgebraic set. Then there exists $R_0>0$ such that for each $R\geq R_0$ there exists a homeomorphism $h\colon C\setminus B_R(0)\rightarrow \mbox{Cone}_{\infty}(C\cap \mathbb{S}^{n-1}_{R})$ such that $\|h(x)\|=\|x\|$ for all $x\in C\setminus B_R(0)$. Moreover, if $C$ has isolated singularities then we can choose $R_0$ such that $C\cap \mathbb{S}^{n-1}_R$ is smooth for all $R\geq R_0$.
\end{proposition}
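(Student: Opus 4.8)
The plan is to obtain the conical structure at infinity from the semialgebraic local triviality theorem (Hardt) applied to the norm function on $C$, and then to replace the resulting trivialization by one that is norm preserving; an alternative, essentially equivalent, route is to conjugate the classical local conical structure theorem at a point by the inversion $x\mapsto x/\|x\|^2$.

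First I would apply Hardt's semialgebraic triviality theorem to the semialgebraic function $\rho\colon C\to\R$, $\rho(x)=\|x\|$. As $\R$ admits a finite semialgebraic partition over whose pieces $\rho$ is semialgebraically trivial, there is $R_0>0$ such that $\rho$ is semialgebraically trivial over $[R_0,+\infty)$; restricting the trivialization to $[R,+\infty)$ and to the slice $\rho^{-1}(R)$, for every $R\ge R_0$ one gets a (semialgebraic) homeomorphism
\[
\Psi_R\colon (C\cap\mathbb{S}^{n-1}_R)\times[R,+\infty)\;\longrightarrow\; C\setminus B_R(0),\qquad \rho\circ\Psi_R=\mathrm{pr}_2 .
\]
Now set $g_R\colon(C\cap\mathbb{S}^{n-1}_R)\times[R,+\infty)\to\mathrm{Cone}_{\infty}(C\cap\mathbb{S}^{n-1}_R)$, $g_R(y,t)=\tfrac{t}{R}\,y$. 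Since $\|y\|=R$ on the first factor, $g_R$ is a homeomorphism, with inverse $z\mapsto\big(\tfrac{R}{\|z\|}\,z,\|z\|\big)$, and $\|g_R(y,t)\|=t$. Then $h:=g_R\circ\Psi_R^{-1}\colon C\setminus B_R(0)\to\mathrm{Cone}_{\infty}(C\cap\mathbb{S}^{n-1}_R)$ is a homeomorphism, and if $\Psi_R^{-1}(x)=(y,t)$ then $t=\rho(x)=\|x\|$, whence $\|h(x)\|=\|g_R(y,\|x\|)\|=\|x\|$. (If $C$ is bounded the statement is vacuous once $R_0$ exceeds $\sup_{x\in C}\|x\|$.)

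For the last assertion, suppose $C$ has isolated singularities, so $\mathrm{Sing}(C)$ is finite, and choose $R_1$ with $\mathbb{S}^{n-1}_R\cap\mathrm{Sing}(C)=\varnothing$ for every $R\ge R_1$; then $C\cap\mathbb{S}^{n-1}_R=C_{\mathrm{reg}}\cap\mathbb{S}^{n-1}_R$ is the level set $\{x\in C_{\mathrm{reg}}:\|x\|^2=R^2\}$ of the smooth semialgebraic function $\|\cdot\|^2$ restricted to the smooth semialgebraic manifold $C_{\mathrm{reg}}=C\setminus\mathrm{Sing}(C)$. By the semialgebraic version of Sard's theorem, the critical values of this function form a measure-zero semialgebraic subset of $\R$, hence a finite set; enlarging $R_1$ beyond all of them makes $R^2$ a regular value for every $R\ge R_1$, so $C\cap\mathbb{S}^{n-1}_R$ is then a smooth submanifold. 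Replacing $R_0$ by $\max\{R_0,R_1\}$ finishes the proof.

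The only delicate point is the exact identity $\|h(x)\|=\|x\|$: Hardt's theorem produces a trivialization compatible only with the level sets of $\rho$, not with the value of $\|\cdot\|$ itself, and it is the explicit rescaling $g_R(y,t)=\tfrac{t}{R}\,y$ — rather than the naive $(y,t)\mapsto ty$ — together with the choice of $[R,+\infty)$ as the parameter interval, that upgrades this to the cone $\mathrm{Cone}_{\infty}(C\cap\mathbb{S}^{n-1}_R)$ with the norm preserved on the nose. The alternative argument via the inversion $\phi(x)=x/\|x\|^2$, which is a semialgebraic involutive homeomorphism of $\R^n\setminus\{0\}$ with $\|\phi(x)\|=1/\|x\|$, carrying $\R^n\setminus B_R(0)$ onto $\overline{B_{1/R}(0)}\setminus\{0\}$ and $\mathrm{Cone}(A)\setminus\{0\}$ onto $\mathrm{Cone}_{\infty}(\phi(A))$ for $A\subset\mathbb{S}^{n-1}_{1/R}$, reduces the statement to the classical conical structure at the origin applied to $\overline{\phi(C\setminus\{0\})}$, and meets precisely the same point when one insists that the homeomorphism be radial.
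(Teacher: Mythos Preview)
The paper states Proposition~\ref{propconicalstru} without proof; it is quoted as a well-known structural fact about semialgebraic sets (the conical structure at infinity), so there is no argument in the paper to compare against.

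Your proof is correct and is the standard route: Hardt's semialgebraic triviality applied to the norm function yields a product structure $(C\cap\mathbb{S}^{n-1}_R)\times[R,+\infty)\cong C\setminus B_R(0)$ over the unbounded piece of the partition of $\R$, and your rescaling $g_R(y,t)=\tfrac{t}{R}y$ is exactly what is needed to identify that product with $\mathrm{Cone}_\infty(C\cap\mathbb{S}^{n-1}_R)$ in a norm-preserving way. One tiny cosmetic point: Hardt's theorem in its usual formulation produces triviality over an \emph{open} unbounded interval $(a,+\infty)$; to get the statement literally for every $R\ge R_0$ one should take $R_0>a$ and then restrict, which you implicitly do. The alternative you sketch via the inversion $x\mapsto x/\|x\|^2$ and the local conical structure theorem at the origin is indeed an equivalent and equally standard argument, and it is the one often invoked in this literature (cf.\ Corollary~\ref{dimension_tg_cone} in the paper, which already uses this inversion). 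Your treatment of the smoothness assertion via semialgebraic Sard on $\|\cdot\|^2|_{C_{\mathrm{reg}}}$ is also correct: the critical values form a semialgebraic subset of $\R$ of dimension zero, hence a finite set, so all large enough radii are regular.
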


Each connected component of $C\setminus B_R(0)$ is called an {\bf end of $C$}.

\section{The blow-spherical equivalence and some examples and properties}

\subsection{Definition of the blow-spherical equivalence}

\begin{definition}\label{def:bs_homeomorphism}
Let $X$ and $Y$ be subsets in $\mathbb{R}^n$ and $\mathbb{R}^m$ respectively. Let $p\in \mathbb{R}^n\cup \{\infty\}$, $q\in \mathbb{R}^m\cup \{\infty\}$. A homeomorphism $\varphi:X\rightarrow Y$ such that $q=\lim\limits_{x\rightarrow p}{\varphi(x)}$ is said a {\bf blow-spherical homeomorphism at $p$}, if  the homeomorphism 
$$\rho^{-1}_{q}\circ \varphi\circ \rho_p \colon X'_p\setminus \partial X'_p\rightarrow Y'_q\setminus \partial Y'_q$$
extends to a homeomorphism $\varphi'\colon X'_p\rightarrow Y'_q$. A homeomorphism $\varphi\colon X\rightarrow Y$ is said a {\bf blow-spherical homeomorphism} if it is a blow-spherical homeomorphism for all $p\in \overline{X}\cup \{\infty\}$. In this case, we say that the sets $X$ and $Y$ are {\bf blow-spherical homeomorphic} or {\bf blow-isomorphic} ({\bf at $(p,q)$}). 
\end{definition}

\begin{definition}\label{def:strong_diffeo}
Let $X$ and $Y$ be subsets in $\mathbb{R}^n$ and $\mathbb{R}^m$, respectively. We say that a blow-spherical homeomorphism $h\colon X\rightarrow Y$ is a {\bf strong blow-spherical homeomorphism} if $h({\rm Sing}_1(X))={\rm Sing}_1(Y)$ and $h|_{X\setminus {\rm Sing}_1(X)}\colon X\setminus {\rm Sing}_1(X)\rightarrow Y\setminus {\rm Sing}_1(Y)$ is a $C^1$ diffeomorphism, where, for $A\subset \R^p$, ${\rm Sing}_k(A)$ denotes the points $x\in A$ such that, for any open neighbourhood $U$ of $x$, $A\cap U$ is not a $C^k$ submanifold of $\R^p$. A blow-spherical homeomorphism at $\infty$, $\varphi\colon X\rightarrow Y$, is said a {\bf strong blow-spherical homeomorphism at $\infty$ } if there are compact sets $K\subset \mathbb{R}^n$ and $\tilde{K}\subset\mathbb{R}^m$ such that $\varphi(X\setminus K)=Y\setminus \tilde{K}$ and the restriction $\varphi|_{X\setminus K}\colon X\setminus K\to Y\setminus \tilde{K}$ is a strong blow-spherical homeomorphism. 
\end{definition}

\begin{remark}
 We have some examples:
 \begin{enumerate}
  \item $\mbox{Id}:X \rightarrow X$ is a blow-spherical homeomorphism for any $X\subset \mathbb{R}^n$;
  \item Let $X\subset \mathbb{R}^n$, $Y\subset \mathbb{R}^m$ and $Z\subset \mathbb{R}^k$ be subsets. If $f\colon X\rightarrow Y$ and $g\colon Y\rightarrow Z$ are blow-spherical homeomorphisms, then $g\circ f\colon X\rightarrow Z$ is a blow-spherical homeomorphism.
 \end{enumerate}

\end{remark}

Thus we have a category called {\bf blow-spherical category}, which is denoted by $\mbox{BS}$, where its objects are all the subsets of Euclidean spaces and its morphisms are all blow-spherical homeomorphisms.

By definition, if $X$ and $Y$ are strongly blow-spherical homeomorphic then they are blow-spherical homeomorphic, but the converse does not hold in general, as we can see in the next example.

\begin{example}
Let $V=\{(x,y)\in\R^2; y=|x|\}$. The mapping $\varphi\colon \R\to V$ given by $\varphi(x)=(x,|x|)$ is a blow-spherical homeomorphism (see Proposition \ref{Lip_implies_bs}). However, since ${\rm Sing}_1(\R)=\emptyset$ and ${\rm Sing}_1(V)=\{(0,0)\}$, there is no strong blow-spherical homeomorphism $h\colon \R\to V$ and, in particular, $\R$ and $V$ are not strongly blow-spherical homeomorphic.
\end{example}

\subsection{Blow-spherical invariance of the relative multiplicities}

\begin{proposition}\label{propinvarianciamultiplicidaderelativa}
	Let $X$ and $Y$ be semialgebraic set in $\mathbb{R}^n$ and $\mathbb{R}^m$ respectively. Let $\varphi:X\rightarrow Y$ be a blow-spherical homeomorphism at $p\in \mathbb{R}^n\cup \{\infty\}$. Then  
	$$k_{X,p}(x)=k_{Y,q}(\varphi'(x)),$$
	for all $x\in Smp(\partial X'_p)\subset \partial X'_p$, where $q=\lim\limits_{x\to p}\varphi(x)$. In particular, $k(X,p)=k(Y,q)$.
\end{proposition}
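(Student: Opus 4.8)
The plan is to reduce the statement to a purely local invariance statement near a simple point and then transport the combinatorial data of connected components through the extended homeomorphism $\varphi'\colon X'_p\to Y'_q$. First I would observe that, by definition of a blow-spherical homeomorphism at $p$, the restriction $\Phi:=\rho_q^{-1}\circ\varphi\circ\rho_p\colon X'_p\setminus\partial X'_p\to Y'_q\setminus\partial Y'_q$ is a homeomorphism that extends to a homeomorphism $\varphi'\colon X'_p\to Y'_q$, and in particular $\varphi'(\partial X'_p)=\partial Y'_q$. The key point is that $\varphi'$ being a homeomorphism of pairs $(X'_p,\partial X'_p)\to(Y'_q,\partial Y'_q)$ forces it to send simple points to simple points: simplicity of a boundary point is characterized by the purely topological conditions (a) and (b) in the definition (the local connected components of $(X'_p\cap U)\setminus\partial X'_p$ are topological manifolds of the right dimension meeting $\partial X'_p$ as manifolds-with-boundary), and these conditions are preserved by a homeomorphism of pairs. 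Hence $\varphi'({\rm Smp}(\partial X'_p))={\rm Smp}(\partial Y'_q)$.

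Next I would fix $x\in{\rm Smp}(\partial X'_p)$ and set $y=\varphi'(x)\in{\rm Smp}(\partial Y'_q)$. By definition $k_{X,p}(x)$ is the number of connected components of the germ $(\rho_p^{-1}(X\setminus\{p\}),x)=(X'_p\setminus\partial X'_p,x)$, and similarly $k_{Y,q}(y)$ counts the components of $(Y'_q\setminus\partial Y'_q,y)$. Since $\varphi'$ is a homeomorphism carrying $x$ to $y$ and restricting to the homeomorphism $\Phi$ on the complements of the boundaries, it induces a bijection between small punctured-at-the-boundary neighbourhoods of $x$ in $X'_p\setminus\partial X'_p$ and of $y$ in $Y'_q\setminus\partial Y'_q$; more precisely, for a neighbourhood basis $\{U_n\}$ of $x$ the images $\{\varphi'(U_n)\}$ form a neighbourhood basis of $y$, and $\varphi'$ carries the connected components of $(X'_p\cap U_n)\setminus\partial X'_p$ bijectively onto the connected components of $(Y'_q\cap\varphi'(U_n))\setminus\partial Y'_q$. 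Taking the germ (i.e. passing to the limit over the neighbourhood basis) this gives a bijection between the components of the two germs, so $k_{X,p}(x)=k_{Y,q}(y)$, which is the displayed equality.

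Finally, for the ``in particular'' clause, I would recall from Remark \ref{remarksimplepointdense} that ${\rm Smp}(\partial X'_p)$ is open and dense in the $(d-1)$-dimensional part of $\partial X'_p$, and from Definition \ref{def:relative_mult} that $k_{X,p}$ is constant on each connected component of ${\rm Smp}(\partial X'_p)$, with $k(X,p)$ the ordered tuple of these values (in the complex algebraic case, indexed via the irreducible components of $C(X,p)$). Since $\varphi'$ is a homeomorphism, it permutes the connected components of ${\rm Smp}(\partial X'_p)$ onto those of ${\rm Smp}(\partial Y'_q)$, and the equality $k_{X,p}(x)=k_{Y,q}(\varphi'(x))$ shows the constant value on each component of the source equals the constant value on the corresponding component of the target; hence the multisets of relative multiplicities coincide, and after the agreed reordering $k(X,p)=k(Y,q)$.

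The main obstacle I anticipate is the first step: verifying carefully that $\varphi'$ preserves simple points, i.e. that conditions (a) and (b) are genuinely topological and survive under the extended homeomorphism. One has to be a little careful because $\varphi'$ is only asserted to be a homeomorphism $X'_p\to Y'_q$ restricting to $\Phi$ off the boundary, so one must check that a suitable neighbourhood $U$ witnessing simplicity of $x$ can be pushed forward — this is where the fact that $\varphi'(\partial X'_p)=\partial Y'_q$ and that $X'_p,\,Y'_q$ sit inside the ambient cylinders $\mathbb{S}^n\times[0,+\infty)$ is used to produce an ambient open set $\tilde U$ playing the role of $U$ for $y$. Once that is in place, the counting-of-germ-components argument is essentially formal.
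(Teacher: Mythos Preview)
Your proposal is correct and follows essentially the same approach as the paper: the paper's proof simply asserts that $\varphi'(\partial X'_p)=\partial Y'_q$ and $\varphi'({\rm Smp}(\partial X'_p))={\rm Smp}(\partial Y'_q)$, and then observes that the number of connected components of the germ $(X'_p\setminus\partial X'_p,x)$ equals that of $(Y'_q\setminus\partial Y'_q,\varphi'(x))$. Your write-up is in fact more careful than the paper's---you spell out why simplicity is a topological condition of the pair and flag the (minor) issue of producing an ambient open set on the target side, which the paper passes over in silence.
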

\begin{proof}
Since $\varphi\colon X\rightarrow Y$ is a blow-spherical homeomorphism at $p$, we have that $\varphi':X'_p\rightarrow Y'_q$ is a homeomorphism such that $\varphi'(\partial X'_p)=\partial Y'_q$ and $\varphi'({\rm Smp}(\partial X'_p))= {\rm Smp}(\partial Y'_q)$. Thus, for each $x\in {\rm Smp}(\partial X'_p)$, the number of connected components of the germ $(\rho_{p}^{-1}(X\setminus\{p\}),x)=(X'_p\setminus \partial X'_p,x)$ is equal to the number of connected components of the germ $(\rho_{q}^{-1}(Y\setminus\{q\}),\varphi'(x))=(Y'_q\setminus \partial Y'_q,\varphi'(x)) $, which implies  $k_{X,p}(x)=k_{Y,q}(\varphi'(x))$. 
\end{proof}

\subsection{Blow-spherical invariance of the tangent cones}
The next result is a generalization of Proposition 3.3 in \cite{Sampaio:2020}.

\begin{proposition}\label{coneinvarianteblow}
 If $h\colon X\rightarrow Y$ is a blow-spherical homeomorphism at $p\in X\cup \{\infty\}$, then $C(X,p)$ and $C(Y,h(p))$ are blow-spherical homeomorphic at $0$.
\end{proposition}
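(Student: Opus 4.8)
The plan is to produce a blow-spherical homeomorphism between the tangent cones directly from the strict-transform extension $\varphi'$ furnished by the hypothesis. First I would recall from Remark \ref{remarksimplepointcone} that, for a semialgebraic set $Z\subset\R^{N+1}$, the boundary $\partial Z'_a$ of the strict transform equals $(C(Z,a)\cap\mathbb{S}^N)\times\{0\}$. Applying this to $X$ at $p$ and to $Y$ at $q:=h(p)$, the homeomorphism $\varphi'\colon X'_p\to Y'_q$ restricts to a homeomorphism of the boundaries, $\varphi'|_{\partial X'_p}\colon \partial X'_p\to \partial Y'_q$, i.e. a homeomorphism $(C(X,p)\cap\mathbb{S}^N)\times\{0\}\to(C(Y,q)\cap\mathbb{S}^N)\times\{0\}$. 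Identifying $\mathbb{S}^N\times\{0\}$ with $\mathbb{S}^N$, this is a homeomorphism $\psi\colon C(X,p)\cap\mathbb{S}^N\to C(Y,q)\cap\mathbb{S}^N$ between the links of the two cones.

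Next I would extend $\psi$ radially to a homeomorphism $\Psi\colon C(X,p)\to C(Y,q)$ by setting $\Psi(0)=0$ and $\Psi(v)=\|v\|\cdot\psi\!\left(\tfrac{v}{\|v\|}\right)$ for $v\neq 0$; since both $C(X,p)$ and $C(Y,q)$ are genuine cones with vertex at the origin (by the Remark after the definition of tangent cone, even unions of complex lines in the complex-algebraic case, but conicity is all that is needed here), $\Psi$ is a well-defined homeomorphism carrying the origin to the origin. It remains to check that $\Psi$ is itself blow-spherical at $0$, i.e. that $\rho_0^{-1}\circ\Psi\circ\rho_0$ extends across the boundary $\mathbb{S}^N\times\{0\}$. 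But $\rho_0(x,r)=rx$, so $\rho_0^{-1}\circ\Psi\circ\rho_0(x,r)=(\psi(x),r)$ for $r>0$, which manifestly extends to the homeomorphism $(x,r)\mapsto(\psi(x),r)$ of $(C(X,p)\cap\mathbb{S}^N)\times[0,+\infty)$ onto $(C(Y,q)\cap\mathbb{S}^N)\times[0,+\infty)$; note the strict transform of a cone $C$ at $0$ is exactly $(C\cap\mathbb{S}^N)\times[0,+\infty)$. Hence $\Psi$ is a blow-spherical homeomorphism at $0$ between $C(X,p)$ and $C(Y,h(p))$.

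The only subtle point is making sure the radial extension $\Psi$ is well defined and continuous at the origin: this uses that the tangent cone really is a cone (invariant under positive scaling), which is exactly the content of its definition, so there is no genuine obstacle there. One should also be slightly careful about ambient dimensions — $X\subset\R^n$ and $Y\subset\R^m$ may have $n\neq m$, so the spheres $\mathbb{S}^{n-1}$ and $\mathbb{S}^{m-1}$ live in different Euclidean spaces — but this causes no trouble since $\varphi'$ already provides the homeomorphism between the links and the radial construction is carried out intrinsically in $C(X,p)\subset\R^n$ and $C(Y,q)\subset\R^m$ separately. I expect the write-up to be short, the main (mild) obstacle being the bookkeeping identifying $\partial(C(Z,a))'_0$ with $(C(Z,a)\cap\mathbb{S}^N)\times\{0\}$ and checking the extension condition at $0$ cleanly.
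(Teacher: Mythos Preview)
Your proposal is correct and follows essentially the same approach as the paper: restrict the extension $\varphi'$ to the boundary to obtain a homeomorphism of links (the paper calls this $\nu_h$), extend radially to the cones (the paper calls this $d_p h$), and verify the blow-spherical condition via the identity $\rho_0^{-1}\circ\Psi\circ\rho_0(x,r)=(\psi(x),r)$. The only cosmetic difference is that the paper writes $q=\lim_{x\to p}h(x)$ rather than $q=h(p)$, which is the cleaner formulation when $p=\infty$.
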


\begin{proof}
Let $q=\lim_{x\rightarrow p} h(x)$. Thus $h'_{p}\colon \partial X'_{p}\rightarrow \partial Y'_{q}$ is a homeomorphism. Let $\nu_{h}\colon C(X,p)\cap \mathbb{S}^{n-1}\rightarrow C(Y,q)\cap \mathbb{S}^{m-1}$ be the homeomorphism satisfying $h'_{p}(x,0)=(\nu_{h}(x),0)$ for all $x\in C(X,p)\cap \mathbb{S}^{n-1}$. Now, we define $d_{p}h\colon C(X,p)\rightarrow C(Y,q)$ by
  \[d_{p}h(x) = \left\{
  \begin{array}{ll}
    \|x\|\nu_h\left(\frac{x}{\|x\|}\right) &  \mbox{if} \ x \neq 0\\
    0 &  \mbox{if} \ x= 0.
  \end{array}
\right.
\] 
Since $\rho_{0}^{-1}\circ d_{p}h\circ \rho_{0}(x,t)= (\nu_h(x),t)$, $d_{p}h$ is a blow-spherical homeomorphism at $0$.
\end{proof}

\subsection{Blow-spherical invariance of the degree of curves}
\begin{proposition}\label{invariance_of_degree}
Let $X$ and $Y$ be two complex algebraic curves. 
If $X$ and $Y$ are blow-spherical homeomorphic at $\infty$, then ${\rm deg}(X)={\rm deg}(Y)$.
\end{proposition}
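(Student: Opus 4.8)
The plan is to reduce the degree of a complex algebraic curve to a sum of relative multiplicities of its tangent cone at infinity, and then to invoke the blow-spherical invariance of those multiplicities (Proposition~\ref{propinvarianciamultiplicidaderelativa}). First I would record the structure of $C(X,\infty)$ for a curve. Let $X\subset\C^n$ be a pure $1$-dimensional complex algebraic set; then $X$ is unbounded, so $C(X,\infty)$ contains a nonzero vector, and being a complex cone (a union of complex lines through the origin, by the Remark in Section~\ref{section:cones}) it satisfies $\dim_{\C}C(X,\infty)\ge 1$. On the other hand Corollary~\ref{dimension_tg_cone} gives $\dim_{\R}C(X,\infty)\le\dim_{\R}X=2$, hence $\dim_{\C}C(X,\infty)=1$. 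As $C(X,\infty)$ is a complex algebraic set it has finitely many irreducible components $X_1,\dots,X_r$, and each $X_j$, being an irreducible $1$-dimensional complex cone, is a complex line through the origin; in particular $X_j$ is an affine linear subspace of $\C^n$, so $\deg(X_j)=1$ by Proposition~\ref{propdegone}. Substituting into the formula of Remark~\ref{remarkdegree} yields
$$\deg(X)=\sum_{j=1}^{r}k_{X,\infty}(X_j)\cdot\deg(X_j)=\sum_{j=1}^{r}k_{X,\infty}(X_j),$$
and likewise $\deg(Y)=\sum_{j=1}^{s}k_{Y,\infty}(Y_j)$, where $Y_1,\dots,Y_s$ are the irreducible components of $C(Y,\infty)$.

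Next I would bring in the blow-spherical homeomorphism. Let $\varphi\colon X\to Y$ be a blow-spherical homeomorphism at $\infty$ and $q=\lim_{x\to\infty}\varphi(x)$. Since $X$ is closed, $\varphi$ is continuous and $Y$ is unbounded, one checks that $q=\infty$ (a sequence $y_k\to\infty$ in $Y$ has $\varphi^{-1}(y_k)\to\infty$ in $X$, for otherwise a bounded subsequence of $\varphi^{-1}(y_k)$ would converge in $X$ and its $\varphi$-image could not tend to infinity; then $y_k=\varphi(\varphi^{-1}(y_k))\to q$, which forces $q=\infty$). Hence Proposition~\ref{propinvarianciamultiplicidaderelativa} applies at $p=\infty$ and gives the equality of sorted tuples $k(X,\infty)=k(Y,\infty)$; in particular $r=s$ and $\sum_{j=1}^{r}k_{X,\infty}(X_j)=\sum_{j=1}^{s}k_{Y,\infty}(Y_j)$. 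Combining this with the two displayed identities above yields $\deg(X)=\deg(Y)$.

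I do not anticipate a genuine obstacle: the proof amounts to assembling Remark~\ref{remarkdegree} and Proposition~\ref{propinvarianciamultiplicidaderelativa}. The only points needing a small argument are that every irreducible component of $C(X,\infty)$ is a complex line, so that it contributes degree $1$ in the formula of Remark~\ref{remarkdegree}, and that a blow-spherical homeomorphism at $\infty$ between unbounded complex algebraic curves has limit $\infty$ at infinity, so that both Remark~\ref{remarkdegree} and Proposition~\ref{propinvarianciamultiplicidaderelativa} may legitimately be used at the point $\infty$. (One could instead route the argument through Proposition~\ref{coneinvarianteblow} to compare tangent cones directly, but Proposition~\ref{propinvarianciamultiplicidaderelativa} already gives exactly what is needed.)
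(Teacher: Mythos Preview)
Your proof is correct and follows essentially the same approach as the paper: both reduce $\deg$ to a sum of relative multiplicities via Remark~\ref{remarkdegree} (using that each component of the tangent cone at infinity is a complex line of degree $1$) and then invoke Proposition~\ref{propinvarianciamultiplicidaderelativa}. The paper additionally cites Proposition~\ref{coneinvarianteblow} to obtain $r=s$, whereas you extract this directly from the equality of tuples $k(X,\infty)=k(Y,\infty)$; this is a cosmetic difference, and your extra care in justifying that the components are lines and that the limit at infinity is $\infty$ fills in details the paper leaves implicit.
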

\begin{proof}
Let $X_1,...,X_r$ (resp. $Y_1,...,Y_s$) be the irreducible components of $C(X,\infty)$ (resp. $C(Y,\infty)$).
By Proposition \ref{coneinvarianteblow}, we have that $C(X,\infty)$ and $C(Y,\infty)$ are blow-spherical homeomorphic and, consequently, $r=s$. Since each $X_i$ (resp. $Y_j$) is a complex line, it follows from Proposition \ref{propinvarianciamultiplicidaderelativa} and Remark \ref{remarkdegree} that 
 \begin{eqnarray*}
 	\deg X & = & \sum_{i=1}^r{k_{X,\infty}(X_i)\deg X_i}\\
 	& = & \sum_{i=1}^r{k_{X,\infty}(X_i)}\\
 	& = & \sum_{i=1}^r{k_{Y,\infty}(Y_i)}\\
 	& = & \sum_{i=1}^r{k_{Y,\infty}(Y_i)\deg Y_i}=\deg Y.
 \end{eqnarray*}
\end{proof}

\subsection{A Bernstein-type result}

\begin{definition}
	A subset $X\subset \mathbb{R}^n$ is called blow-spherical regular at infinity if there are compact subsets $K$ and $\tilde{K}$ in $X$ and $\mathbb{R}^d$ respectively such that $X\setminus K$ is a blow-spherical homeomorphic at $\infty$ to an $\mathbb{R}^d\setminus \tilde{K}$.
\end{definition}
Now we remember Prill's Theorem proved in \cite{Prill:1967}.
\begin{lemma}[\cite{Prill:1967}, Theorem]
	Let $C\subset \mathbb{C}^n$ be a complex cone which is a topological manifold. Then $C$ is a linear subspace of $\mathbb{C}^n$.
\end{lemma}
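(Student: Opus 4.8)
The plan is to reduce Prill's theorem to the degree-one criterion of Proposition~\ref{propdegone} by computing the integral cohomology ring of the projectivization of $C$ through a Gysin sequence. Write $d=\dim_{\mathbb C}C$. The cases $d\le 1$ are immediate: for $d=1$ the set $C$ is a finite union of complex lines through the origin, and being a topological $2$-manifold forces it to be a single line, which is linear. So assume $d\ge 2$. A local argument also shows $C$ must be irreducible: on a $2d$-dimensional topological manifold the local dimension is constant, so every irreducible component has dimension $d$, and if there were two distinct components then at a generic point of their intersection $C$ would locally be a union of at least two $2d$-dimensional sheets meeting along a set of real codimension $\ge 2$, and deleting that set would locally disconnect $C$, contradicting $C\cong\mathbb{R}^{2d}$ there. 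Hence the projective variety $W:=\mathbb{P}(C)\subset\mathbb{P}^{n-1}$ is an irreducible variety of dimension $d-1$.

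Next I would study the link $L:=C\cap\mathbb{S}^{2n-1}$. Since $C$ is a complex cone, scaling by positive reals identifies $C$ with the topological cone over $L$ and identifies $C\setminus\{0\}$ with $L\times(0,+\infty)$; in particular $L$ is a closed topological $(2d-1)$-manifold and the local homology of $C$ at $0$ is $\widetilde H_{*-1}(L;\mathbb{Z})$. As $C$ is a $2d$-manifold at $0$, this forces $L$ to be an integral homology $(2d-1)$-sphere. On the other hand the $\mathbb{C}^{*}$-action realizes $C\setminus\{0\}\to W$ as the complement of the zero section of $\mathcal{O}_W(-1)$, so $\mathbb{S}^{1}\to L\to W$ is a principal circle bundle whose Euler class is $\pm h$, where $h\in H^{2}(W;\mathbb{Z})$ is the (very ample) hyperplane class. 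Feeding $H^{*}(L;\mathbb{Z})\cong H^{*}(\mathbb{S}^{2d-1};\mathbb{Z})$ into the Gysin sequence of this bundle, the cup product $\cup\,h\colon H^{k-2}(W;\mathbb{Z})\to H^{k}(W;\mathbb{Z})$ becomes an isomorphism for $2\le k\le 2d-2$, and $H^{1}(W;\mathbb{Z})=0$. Starting from $H^{0}(W;\mathbb{Z})=\mathbb{Z}$ and iterating, $H^{2i}(W;\mathbb{Z})=\mathbb{Z}\cdot h^{\,i}$ for $0\le i\le d-1$ and the odd cohomology vanishes; thus $W$ has exactly the integral cohomology ring of $\mathbb{P}^{d-1}$, with $h$ a ring generator.

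Finally I would read off the degree. Since $h^{d-1}$ generates $H^{2d-2}(W;\mathbb{Z})\cong\mathbb{Z}$ and the fundamental class $[W]$ generates $H_{2d-2}(W;\mathbb{Z})\cong\mathbb{Z}$ (top homology of an irreducible complex variety), the evaluation $\langle h^{\,d-1},[W]\rangle$ equals $\pm 1$; being the degree of $W$ with respect to a very ample class it is positive, so $\deg W=1$. Either directly (a degree-one projective variety is a linear subspace, so $C$ is the affine cone over a linear $\mathbb{P}^{d-1}$), or via Proposition~\ref{propdegone} applied to $C$ together with the fact that the degree of an affine cone equals the degree of its projectivization, one concludes $\deg C=1$ and hence that $C$ is an affine linear subspace of $\mathbb{C}^{n}$; since $C$ contains the origin, it is a linear subspace.

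The main obstacle here is conceptual rather than computational: ``topological manifold'' is far weaker than ``smooth'', and in high dimensions one cannot deduce smoothness of $C$ at $0$ directly (cf.\ the Brieskorn-type phenomena for weighted-homogeneous, though not homogeneous, singularities), so the argument must avoid any resolution of the singularity of $C$ and instead exploit the homogeneity globally. The two points where the hypotheses really bite are the verification that $L$ is genuinely an integral homology sphere — which uses the manifold hypothesis exactly at the vertex — and the identification of the Euler class of $L\to W$ with the ample generator $h$, which is what converts the Gysin sequence into the cohomology-ring computation above.
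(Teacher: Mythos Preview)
The paper does not prove this lemma; it is quoted as a result from \cite{Prill:1967} and used as a black box in the proposition that follows. So there is no proof in the paper to compare your argument against.

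Your argument is essentially correct and follows the same strategy as Prill's original: use the manifold hypothesis at the cone point to see that the link $L=C\cap\mathbb{S}^{2n-1}$ is an integral homology $(2d-1)$-sphere, feed this into the Gysin sequence of the circle bundle $L\to W=\mathbb{P}(C)$ with Euler class $\pm h$, conclude $H^{*}(W;\mathbb{Z})\cong\mathbb{Z}[h]/(h^{d})$, and hence $\deg W=1$. One minor point: your claim that $L$ is itself a topological $(2d-1)$-manifold is not justified by what you wrote (in general $X\times\mathbb{R}$ being a manifold does not force $X$ to be one; cf.\ the Cannon--Edwards double-suspension phenomenon), but you never actually use this --- the Gysin sequence needs only that $L\to W$ be an oriented $\mathbb{S}^{1}$-bundle over a finite CW complex, and the homology-sphere statement follows purely from the local homology of $C$ at $0$. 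The passage from $H^{2d-2}(W;\mathbb{Z})=\mathbb{Z}\cdot h^{\,d-1}$ to $\langle h^{\,d-1},[W]\rangle=\pm 1$ is fine once you note, via universal coefficients, that the evaluation $H^{2d-2}(W;\mathbb{Z})\to\mathrm{Hom}(H_{2d-2}(W;\mathbb{Z}),\mathbb{Z})$ is a surjection between two copies of $\mathbb{Z}$, hence an isomorphism carrying generator to generator.
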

\begin{proposition}
 Let $X\subset \mathbb{C}^n$ be a pure dimensional complex algebraic set. If $X$ is blow-spherical regular at infinity, then $X$ is an affine linear subspace of $\mathbb{C}^n$.
\end{proposition}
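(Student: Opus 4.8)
The plan is to reduce the statement to showing $\deg(X)=1$; the conclusion then follows at once from Proposition \ref{propdegone}. By Remark \ref{remarkdegree}, writing $X_1,\dots,X_r$ for the irreducible components of the tangent cone $C(X,\infty)$, one has
\[
\deg(X)=\sum_{j=1}^{r}k_{X,\infty}(X_j)\,\deg(X_j),
\]
so it suffices to prove that $r=1$, that $\deg(X_1)=1$, and that $k_{X,\infty}(X_1)=1$.

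Set $d=\dim_{\R}X$ and fix compact sets $K\subset X$, $\tilde{K}\subset\R^{d}$ and a blow-spherical homeomorphism at infinity $\varphi\colon X\setminus K\to\R^{d}\setminus\tilde{K}$, which exists since $X$ is blow-spherical regular at infinity. The tangent cone at infinity, the boundary of the strict transform at infinity, its set of simple points, and the relative multiplicity function depend only on the germ of a set at infinity, so replacing $X$ by $X\setminus K$ changes neither $C(X,\infty)$ nor the values of $k_{X,\infty}$ on ${\rm Smp}(\partial X'_\infty)$; moreover $C(\R^{d}\setminus\tilde{K},\infty)=\R^{d}$. Hence, by Proposition \ref{coneinvarianteblow}, $C(X,\infty)$ is blow-spherical homeomorphic at $0$ to $\R^{d}$; in particular $C(X,\infty)$ is a topological manifold at every point, including the cone point $0$. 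On the other hand, since $X$ is complex algebraic, the remark following the definition of the tangent cone shows that $C(X,\infty)$ is a complex cone. Therefore Prill's Lemma (recalled above) applies and $C(X,\infty)$ is a complex linear subspace of $\C^{n}$; being irreducible, this forces $r=1$, $X_1=C(X,\infty)$, and $\deg(X_1)=1$ (an affine linear subspace has degree one, again by Proposition \ref{propdegone}).

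It remains to check $k_{X,\infty}(X_1)=1$. The homeomorphism $\varphi'\colon(X\setminus K)'_\infty\to(\R^{d}\setminus\tilde{K})'_\infty$ carries $\partial(X\setminus K)'_\infty$ onto $\partial(\R^{d}\setminus\tilde{K})'_\infty$ and, the simple-point condition being topological, restricts to a homeomorphism between the corresponding sets of simple points preserving the local number of branches; this is the invariance recorded in Proposition \ref{propinvarianciamultiplicidaderelativa}, so $k_{X,\infty}(x)=k_{\R^{d}\setminus\tilde{K},\infty}(\varphi'(x))$ for every simple point $x$ of $\partial X'_\infty$. But near infinity $\R^{d}\setminus\tilde{K}$ coincides with $\R^{d}$, so $(\R^{d}\setminus\tilde{K})'_\infty$ is, near its boundary $\mathbb{S}^{d-1}\times\{0\}$, the manifold with boundary $\mathbb{S}^{d-1}\times[0,\varepsilon)$; hence every boundary point is simple and the germ there of $\rho_\infty^{-1}\big((\R^{d}\setminus\tilde{K})\setminus\{0\}\big)$ is connected, i.e. $k_{\R^{d}\setminus\tilde{K},\infty}\equiv 1$. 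Thus $k_{X,\infty}\equiv 1$ on ${\rm Smp}(\partial X'_\infty)$, and by Definition \ref{def:relative_mult} we get $k_{X,\infty}(X_1)=1$. Therefore $\deg(X)=k_{X,\infty}(X_1)\,\deg(X_1)=1$, and $X$ is an affine linear subspace of $\C^{n}$ by Proposition \ref{propdegone}.

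Most of the content of this proof is imported from earlier results — the blow-spherical invariance of the tangent cone and of the relative multiplicities, Prill's theorem, and the degree formula of Remark \ref{remarkdegree} — so the argument is essentially their assembly. The step deserving the most attention is the reduction of the tangent cone to a linear subspace: one must be sure that $C(X,\infty)$ is a topological manifold at the cone point $0$ (which is what makes Prill's Lemma applicable), and here this is immediate from the homeomorphism $C(X,\infty)\cong\R^{d}$. The remaining subtlety is the routine but necessary bookkeeping that $k_{X,\infty}(X_1)$ is genuinely the generic local branch count of $X$ at infinity, so that the blow-spherical homeomorphism transports it to the value $1$ exhibited by $\R^{d}$.
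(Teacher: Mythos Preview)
Your proof is correct and follows essentially the same route as the paper: use Proposition~\ref{coneinvarianteblow} to see that $C(X,\infty)$ is homeomorphic to Euclidean space and hence a topological manifold, apply Prill's theorem to conclude it is a linear subspace, invoke Proposition~\ref{propinvarianciamultiplicidaderelativa} to obtain $k_{X,\infty}(C(X,\infty))=1$, and then combine Remark~\ref{remarkdegree} with Proposition~\ref{propdegone}. Your write-up is simply more explicit about the bookkeeping (e.g.\ verifying $k_{\R^d,\infty}\equiv 1$ directly), but the argument is the same.
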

\begin{proof}
 By Proposition \ref{coneinvarianteblow}. $C(X,\infty)$ is homeomorphic to $\mathbb{C}^k$, where $k=\dim X$, consequently, $C(X,\infty)$ is a topological manifold. By Prill's theorem $C(X,\infty)$ is a linear subspace of $\mathbb{C}^n$, in particular $C(X,\infty)$ is irreducible. Hence $\deg(C(X,\infty))=1$ and by Proposition \ref{propinvarianciamultiplicidaderelativa}, we have $k_{X,\infty}(C(X,\infty))=1$ and using Remark \ref{remarkdegree}, we have that
 $$\deg X=k_{X,\infty}(C(X,\infty))\cdot \deg(C(X,\infty))=1.$$
 Then, by Proposition \ref{propdegone}, $X$ is an affine linear subspace of $\mathbb{C}^n$.
\end{proof}

\subsection{Semi-algebraic outer lipeomorphisms are blow-spherical homeomorphisms}

Given a path connected subset $X\subset\R^n$, the
{\bf inner distance}  on $X$  is defined as follows: given two points $x_1,x_2\in X$, $d_{X,inn}(x_1,x_2)$  is the infimum of the lengths of paths on $X$ connecting $x_1$ to $x_2$. We denote by $d_{X,out}$ the Euclidean distance of $\R^n$ restrict to $X$.
\begin{definition}\label{lipschitz function}
Let $X\subset\R^n$ and $Y\subset\R^m$. A mapping $f\colon X\rightarrow Y$ is called {\bf outer} (resp. {\bf inner}) {\bf Lipschitz} if there exists $\lambda >0$ such that is
$$\|f(x_1)-f(x_2)\|\le \lambda \|x_1-x_2\| \quad (\mbox{resp. } d_{X,inn}(f(x_1),f(x_2))\le \lambda d_{X,inn}(x_1,x_2))$$  for all $x_1,x_2\in X$. An outer Lipschitz (resp. inner Lipschitz) mapping $f\colon X\rightarrow Y$ is called
{\bf outer} (resp. {\bf inner}) {\bf lipeomorphism} if its inverse mapping exists and is outer Lipschitz (resp. inner Lipschitz) and, in this case, we say that  $X$ and $Y$ are {\bf outer} (resp. {\bf inner}) {\bf lipeomorphic}.
\end{definition}
\begin{proposition}\label{Lip_implies_bs}
Let $X\subset \mathbb{R}^n$ and $Y\subset \mathbb{R}^m$ be semialgebraic sets. If $h\colon X\rightarrow Y$ is a semialgebraic outer lipeomorphism then $h$ is a blow-spherical homeomorphism. 
\end{proposition}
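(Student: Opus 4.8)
The plan is to show that for each point $p\in\overline{X}\cup\{\infty\}$, the conjugated map $\rho_q^{-1}\circ h\circ\rho_p$ defined on $X'_p\setminus\partial X'_p$ extends continuously (with continuous inverse) across the boundary $\partial X'_p$, where $q=\lim_{x\to p}h(x)$. Since $h$ is a semialgebraic homeomorphism, $q$ is well defined, and by symmetry (the inverse $h^{-1}$ is also a semialgebraic outer lipeomorphism) it suffices to produce the continuous extension $\varphi'$; its inverse will then be the extension obtained from $h^{-1}$, and the two are mutually inverse on the dense subset $X'_p\setminus\partial X'_p$, hence everywhere by continuity. I will treat the case $p\in\R^n$ first and indicate the change at $\infty$ at the end.

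First I would recall that the strict transform $X'_p$ is the closure of $\{(\tfrac{x-p}{\|x-p\|},\|x-p\|):x\in X\setminus\{p\}\}$ in $\mathbb{S}^{n-1}\times[0,+\infty)$, and that its boundary is $\partial X'_p=(C(X,p)\cap\mathbb{S}^{n-1})\times\{0\}$ by Remark \ref{remarksimplepointcone}. The key local computation is: if $\{x_i\}\subset X$ is a sequence with $x_i\to p$ and $\tfrac{x_i-p}{\|x_i-p\|}\to v$, then, writing $h(p)=q$, the Lipschitz bounds $\tfrac1\lambda\|x-p\|\le\|h(x)-q\|\le\lambda\|x-p\|$ (valid near $p$ since $h$ and $h^{-1}$ are outer Lipschitz) force $h(x_i)\to q$, and the direction vectors $\tfrac{h(x_i)-q}{\|h(x_i)-q\|}$ accumulate only on a set of unit tangent vectors to $Y$ at $q$. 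The heart of the matter is to show this accumulation is in fact a single point $\nu(v)$ depending only on $v$, i.e. that the limit of the directions is independent of the chosen sequence realizing $v$. This is exactly where semialgebraicity enters: using the curve-selection characterization of the tangent cone (Corollary \ref{corollary 2.16FernandesS:2020} applied near $p$, or its local analogue), one reduces to the case of a semialgebraic arc $\gamma(t)=p+tv+o(t)$; then $h\circ\gamma$ is a semialgebraic arc through $q$, so it has a Puiseux expansion $h(\gamma(t))=q+ct^{\alpha}w+o(t^\alpha)$ with $w$ a unit vector, and the Lipschitz estimates pin $\alpha=1$; one then shows the leading direction $w$ does not depend on the arc but only on $v$, by comparing two arcs with the same leading term $v$ and using that their images stay within Lipschitz distance $\le\lambda\cdot o(t)$ of each other, which is $o(t)$, hence they have the same degree-one leading direction. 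This defines $\nu:C(X,p)\cap\mathbb{S}^{n-1}\to C(Y,q)\cap\mathbb{S}^{m-1}$.

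Next I would verify that $\varphi'$, defined as $\rho_q^{-1}\circ h\circ\rho_p$ on the interior and as $(v,0)\mapsto(\nu(v),0)$ on $\partial X'_p$, is continuous on all of $X'_p$. Continuity at interior points is automatic since $\rho_p,\rho_q$ are homeomorphisms there; continuity at a boundary point $(v,0)$ amounts to: whenever $(u_i,r_i)\to(v,0)$ in $X'_p$, the images converge to $(\nu(v),0)$, and this follows from the above sequential analysis together with the uniform estimate $\|h(\rho_p(u_i,r_i))-q\|\le\lambda r_i\to0$ for the radial coordinate, plus the fact that every convergent subsequence of the angular coordinates must converge to $\nu(v)$ (again by the arc argument, approximating the sequence by a semialgebraic arc via curve selection, which is legitimate since $X$ and $h$ are semialgebraic). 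Running the same construction for $h^{-1}$ yields a continuous $\psi'\colon Y'_q\to X'_p$ which is inverse to $\varphi'$ on the dense interior, hence $\varphi'$ is a homeomorphism, so $h$ is blow-spherical at $p$. For $p=\infty$ one replaces $\rho_p$ by $\rho_\infty$ and uses the inversion $\phi(x)=x/\|x\|^2$ (as in Corollary \ref{dimension_tg_cone}) to transport the problem to a neighbourhood of $0$: $h$ semialgebraic outer Lipschitz at infinity means $\tfrac1\lambda\|x\|\le\|h(x)\|\le\lambda\|x\|$ for large $\|x\|$, which transports to a local Lipschitz-type estimate for the conjugated map at $0$, and the same argument applies.

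The main obstacle is the single-valuedness and continuity of $\nu$ — i.e., showing the leading radial/angular data of $h\circ\gamma$ depends only on the leading direction of $\gamma$ and not on the arc, and that this passes to limits of sequences. This is where one genuinely needs semialgebraicity (Puiseux expansions, curve selection) rather than mere continuity; the Lipschitz hypothesis is used precisely to guarantee the Puiseux exponent is $1$ so that $h$ interacts well with the blow-up's radial coordinate, and to guarantee $q=\lim_{x\to p}h(x)$ exists in the first place. The remaining steps (that $\varphi'$ maps boundary to boundary, that it is a bijection, that the inverse is the analogous extension of $h^{-1}$) are then formal.
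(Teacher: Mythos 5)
Your proposal is correct and follows essentially the same strategy as the paper: define the induced map on unit tangent directions via semialgebraic arcs, use the bi-Lipschitz estimates (divided by the radial parameter and passed to the limit) to show this map is well defined and that the conjugated map extends continuously to the boundary, and obtain the inverse by symmetry from $h^{-1}$. The only differences are organizational --- the paper proves the case $p=\infty$ in detail and delegates the finite case to a cited result of Sampaio, whereas you do the finite case in detail and transport to infinity by inversion --- plus a minor imprecision: your appeal to ``approximating the sequence by a semialgebraic arc via curve selection'' should really be the Lipschitz comparison of the sequence points with a fixed reference arc in the given direction, which is the mechanism you (and the paper) actually use.
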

\begin{proof}
Firstly, let us prove that $h$ is a blow-spherical homeomorphism at $\infty$.
 So, consider $h_{\infty}'\colon X_{\infty}'\rightarrow Y_{\infty}'$ given by
 \[h_{\infty}'(u,t) = \left\{
  \begin{array}{lr}
    \left( \frac{h(\frac{u}{t})}{\|h(\frac{u}{t})\|},\frac{1}{\|h(\frac{u}{t})\|}\right) & \mbox{if} \ t \neq 0\\
    \left( \frac{d_{\infty}h(u)}{\|d_{\infty}h(u)\|},0\right) & \mbox{if} \  t= 0
  \end{array}
\right.
\]
where $d_{\infty}h\colon C(X,\infty)\rightarrow C(Y,\infty)$ is the mapping given by in the following way: Given $v\in C(X,\infty)$, let $\gamma: (\epsilon, +\infty)\rightarrow X$ be a semialgebraic arc such that $\gamma(t)=tv+o_{\infty}(t)$. Then $h\circ \gamma(t)=tw+o_{\infty}(t)$, for some $w\in C(Y,\infty)$. Thus, we set $d_{\infty}h(v)=w$.
\begin{claim}\label{caim:dh-bilipischitz}
$d_{\infty}h\colon C(X,\infty)\rightarrow C(Y,\infty)$ is an outer lipeomorphism.
\end{claim}
\begin{proof}[Proof of Claim \ref{caim:dh-bilipischitz}] 
By hypotheses, there is a $K\geq 1$ such that 
$$\frac{1}{K}\|x-y\| \leq \|h(x)-h(y)\|\leq K\|x-y\|.$$
Now, consider $v_1,v_2\in C(X,\infty)$ and let $\gamma_1,\gamma_2 \colon (\epsilon,+\infty)\rightarrow X$ semialgebraic curve such that $\gamma_i(t)=tv_i+o_{\infty}(t)$, for $i=1,2$. Therefore,
$$\frac{1}{K}\frac{\|\gamma_1(t)-\gamma_2(t)\|}{t}\leq \frac{\|h(\gamma_1(t))-h(\gamma_2(t))\|}{t}\leq K\frac{\|\gamma_1(t)-\gamma_2(t)\|}{t}.$$
Sending $t$ to infinity, we obtain 
$$\frac{1}{K}\|v_1-v_2\|\leq \|d_{\infty}h(v_1)-d_{\infty}h(v_2)\| \leq K\|v_1-v_2\|.$$ 
Thus, $d_{\infty}h$ is well-defined and an outer lipeomorphism. 
\end{proof}
Therefore, 
$$\frac{d_{\infty}h}{\|d_{\infty}h\|}\colon C(X,\infty)\cap \mathbb{S}^n\rightarrow C(Y,\infty)\cap \mathbb{S}^n$$ is a homeomorphism. 
\begin{claim}\label{claim:h-continuous}
$h_{\infty}' \colon X_{\infty}' \rightarrow Y_{\infty}'$ is a continuous map.
\end{claim}
\begin{proof}[Proof of Claim \ref{claim:h-continuous}.]
Clearly, $h_{\infty}'$ is continuous in $X_{\infty}'\setminus \partial X_{\infty}'$. Thus, it is enough to prove that $h_{\infty}'$ is continuous at each point $(u,0)\in \partial X_{\infty}'$. So, let $\{(x_n,t_n)\}\subset X_{\infty}'$ such that $(x_n,t_n)\rightarrow (u,0)$. We have to show that 
$$\lim_{n\rightarrow +\infty} h_{\infty}'(x_n,t_n)=h_{\infty}'(u,0)=\left(\frac{d_{\infty}h(u)}{\|d_{\infty}h(u)\|},0\right).$$
Since $h_{\infty}'{\big|}_{\partial X_{\infty}'}\colon \partial X_{\infty}'\rightarrow \partial Y_{\infty}'$  is continuous, we may assume that $t_n> 0$, $\forall n$. Thus, by setting $s_n=\frac{1}{t_n}$, we have 
\begin{eqnarray*}
 \lim_{n\rightarrow +\infty} h_{\infty}'(x_n,t_n) & = & \lim_{n\rightarrow +\infty} \left( \frac{h(\frac{u}{t_n})}{\|h(\frac{u}{t_n})\|},\frac{1}{\|h(\frac{u}{t_n})\|}\right)\\
 & = & \lim_{n\rightarrow +\infty} \left( \frac{\frac{h(s_nu)}{s_n}}{\|\frac{h(s_nu)}{s_n}\|},\frac{1}{\|h(s_nu)\|}\right).
\end{eqnarray*}
Let $\gamma\colon (\epsilon,+\infty)\rightarrow X$ be a semialgebraic arc such that $\gamma(s)=su+o_{\infty}(s)$. Then 
$\lim_{n\rightarrow +\infty}\frac{h(s_nu)}{s_n}= d_{\infty}h(u)$.
Indeed, 
\begin{eqnarray*}
 \left\| \frac{h(s_nu)}{s_n}-\frac{h(\gamma(s_n))}{s_n}\right\| & \leq & K\frac{\|s_nu-\gamma(s_n)\|}{s_n}\\
 & = & K\left\| \frac{o_{\infty}(s_n)}{s_n}\right\|.
\end{eqnarray*}
Therefore, 
\begin{eqnarray*}
 0 & = & \lim_{n\rightarrow +\infty}\left\| \frac{h(s_nu)}{s_n}-\frac{h(\gamma(s_n))}{s_n}\right\|\\
 & = & \left\| \lim_{n\rightarrow +\infty} \frac{h(s_nu)}{s_n}-d_{\infty}h(u)\right\|.
\end{eqnarray*}
which implies that $h'_{\infty}$ is continuous.
\end{proof}

Similarly, we also prove that $h'^{-1}_{\infty}$ is continuous and this finishes the proof that $h$ is a blow-spherical homeomorphism at $\infty$.

Now, let $p\in \overline{X}$ and $q=\lim\limits_{x\to p}h(p)$. 
Let $T_p\colon \R^n\to \R^n$ and $T_q\colon \R^m\to \R^m$ the mappings given by $T_p(x)=x+p$ and $T_q(y)=x+q$. 
It follows from \cite[Proposition 3.8]{Sampaio:2020} that $T_q^{-1}\circ h\circ T_p$ is a blow spherical homeomorphism at $0$. Therefore, $h$ is a blow spherical homeomorphism at $p$.

\end{proof}

\subsection{The projective closure of algebraic curves which are blow-spherical homeomorphic}
Let $\iota\colon\mathbb{C}^n\rightarrow \mathbb{CP}^n$ be the mapping given by $\iota(x)=(1:x)$. The hyperplane at infinity, denoted by $L_{\infty}$, is the complement of $\iota(\mathbb{C}^n)$ in $\mathbb{CP}^n$. Here, we identify $\C^n$ with $\iota(\mathbb{C}^n)$. With this identification, for a set $X\subset \C^n$, the closure of $X$ in $\mathbb{CP}^n$ is called the {\bf projective closure of $X$}.
\begin{proposition}\label{homeoprojective}
 Let $X\subset \mathbb{C}^m$ and $Y\subset \mathbb{C}^k$ be complex algebraic curves. Let $h\colon X\rightarrow Y$ be a homeomorphism which is a blow-spherical homeomorphism at $\infty$. Then $h$ extends to a homeomorphism between the projective closures of $X$ and $Y$.
\end{proposition}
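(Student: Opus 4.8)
The plan is to build the extension $\bar h\colon\overline{X}\to\overline{Y}$ by hand. On the affine part it is forced to be $h$ itself, read through the chart $\iota$; the only genuinely new data is where $\bar h$ sends the finitely many points of $\overline{X}$ lying on $L_\infty$, and this will be dictated by the boundary behaviour of the blow-spherical homeomorphism at infinity. Since $X$ is a complex algebraic curve, $C(X,\infty)$ is a finite union of complex lines $\ell_1,\dots,\ell_r$ through the origin, and one checks that $\overline{X}\cap L_\infty$ is exactly the set of $r$ distinct points $p_i:=[0:\ell_i]$: every point of $\overline{X}\cap L_\infty$ is of the form $[0:v]$ with $v$ a unit limit direction of $X$ at infinity, i.e.\ $v\in C(X,\infty)\cap\mathbb{S}^{2m-1}$, and $[0:v]$ depends only on the complex line $\mathbb{C}v$. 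Likewise write $C(Y,\infty)=\ell'_1\cup\dots\cup\ell'_s$ and $\overline{Y}\cap L_\infty=\{q_1,\dots,q_s\}$ with $q_j:=[0:\ell'_j]$. By Remark~\ref{remarksimplepointcone}, $\partial X'_\infty=(C(X,\infty)\cap\mathbb{S}^{2m-1})\times\{0\}$ is a disjoint union of the $r$ circles $(\ell_i\cap\mathbb{S}^{2m-1})\times\{0\}$, and similarly for $\partial Y'_\infty$. Since $h$ is blow-spherical at $\infty$, the extension $h'_\infty\colon X'_\infty\to Y'_\infty$ carries the interior onto the interior, hence $\partial X'_\infty$ homeomorphically onto $\partial Y'_\infty$, so (exactly as in the proof of Proposition~\ref{coneinvarianteblow}) we may write $h'_\infty(v,0)=(\nu_h(v),0)$ for a homeomorphism $\nu_h\colon C(X,\infty)\cap\mathbb{S}^{2m-1}\to C(Y,\infty)\cap\mathbb{S}^{2k-1}$. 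Being a homeomorphism between disjoint unions of circles, $\nu_h$ forces $r=s$ and maps each circle $\ell_i\cap\mathbb{S}^{2m-1}$ onto a circle $\ell'_{\sigma(i)}\cap\mathbb{S}^{2k-1}$, for a uniquely determined bijection $\sigma$ of $\{1,\dots,r\}$.

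I would then define $\bar h\colon\overline{X}\to\overline{Y}$ by $\bar h(\iota(x))=\iota(h(x))$ for $x\in X$ and $\bar h(p_i)=q_{\sigma(i)}$ for $i=1,\dots,r$. Since $\overline{X}=\iota(X)\sqcup\{p_1,\dots,p_r\}$ and $\overline{Y}=\iota(Y)\sqcup\{q_1,\dots,q_r\}$ and $h$ and $\sigma$ are bijections, $\bar h$ is a well-defined bijection, and on the open subset $\iota(X)\subset\overline{X}$ it restricts to the homeomorphism $\iota\circ h\circ\iota^{-1}$ onto the open subset $\iota(Y)\subset\overline{Y}$. Hence $\bar h$ and $\bar h^{-1}$ are automatically continuous at every affine point, and it remains only to check that $\bar h$ is continuous at each $p_i$ and $\bar h^{-1}$ at each $q_j$.

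For continuity at $p_i$ I would argue with sequences, using that $h'_\infty$ extends $\rho_\infty^{-1}\circ h\circ\rho_\infty$, so that $h'_\infty(\rho_\infty^{-1}(x))=\rho_\infty^{-1}(h(x))$ for all $x\in X$ with $\|x\|$ large enough that $h(x)\neq 0$ (such an $R$ exists since $h(x)\to\infty$ as $x\to\infty$). Let $z_n\to p_i$ in $\overline{X}$; by the subsequence principle it suffices to treat $z_n=\iota(x_n)$, $x_n\in X$. Then $x_n\to\infty$ (a bounded subsequence would force $\iota(x_n)$ to accumulate in $\iota(\mathbb{C}^m)$, not at $p_i$), and after passing to a subsequence $x_n/\|x_n\|\to v\in C(X,\infty)\cap\mathbb{S}^{2m-1}$; from $\iota(x_n)=[\,\|x_n\|^{-1}:x_n/\|x_n\|\,]\to[0:v]=p_i$ we read off $v\in\ell_i$. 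Now $\rho_\infty^{-1}(x_n)=(x_n/\|x_n\|,\|x_n\|^{-1})\to(v,0)\in\partial X'_\infty$, so continuity of $h'_\infty$ gives $\rho_\infty^{-1}(h(x_n))=h'_\infty(\rho_\infty^{-1}(x_n))\to(\nu_h(v),0)$; comparing coordinates, $\|h(x_n)\|\to\infty$ and $h(x_n)/\|h(x_n)\|\to\nu_h(v)$, whence $\bar h(z_n)=\iota(h(x_n))=[\,\|h(x_n)\|^{-1}:h(x_n)/\|h(x_n)\|\,]\to[0:\nu_h(v)]$. Since $v\in\ell_i$ forces $\nu_h(v)\in\ell'_{\sigma(i)}$, this limit equals $q_{\sigma(i)}=\bar h(p_i)$, which is continuity at $p_i$. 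Applying the same reasoning to $h^{-1}$ — again a blow-spherical homeomorphism at $\infty$, with $(h^{-1})'_\infty=(h'_\infty)^{-1}$ — gives continuity of $\bar h^{-1}$ at each $q_j$. Therefore $\bar h$ is a homeomorphism extending $h$.

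The step I expect to require the most care is the identification in the first paragraph: that the points at infinity of the projective closure correspond precisely to the connected components (the ``Hopf'' circles) of the boundary $\partial X'_\infty$ of the spherical blow-up — one point for each complex line in $C(X,\infty)$ — and that $h'_\infty$, being a homeomorphism of $\partial X'_\infty$ onto $\partial Y'_\infty$, necessarily permutes these circles. This is what makes the rule $p_i\mapsto q_{\sigma(i)}$ well-defined and, together with the identity $h'_\infty\circ\rho_\infty^{-1}=\rho_\infty^{-1}\circ h$, forces the resulting bijection to be bicontinuous. Conceptually, the argument is just the remark that a neighbourhood of $L_\infty$ in $\overline{X}$ is the quotient of the compact set $\overline{\rho_\infty^{-1}(\{x\in X:\|x\|\ge R\})}$ obtained by collapsing each such circle of its boundary to a point, and that $h'_\infty$ descends to these quotients; the sequence computation above is merely a hands-on implementation of this.
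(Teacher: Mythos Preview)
Your proof is correct and follows essentially the same approach as the paper: both identify the points $\overline{X}\cap L_\infty$ with the complex lines in $C(X,\infty)$, use the boundary homeomorphism $\nu_h$ coming from $h'_\infty$ to define the bijection $\sigma$ and hence the extension $\bar h$, and then verify continuity at each $p_i$ by a sequence argument that unwinds the relation $h'_\infty\circ\rho_\infty^{-1}=\rho_\infty^{-1}\circ h$. Your write-up is in fact a bit cleaner than the paper's, which passes through explicit affine charts; your explicit invocation of the subsequence principle and the final conceptual remark about collapsing the Hopf circles are nice touches not spelled out in the original.
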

\begin{proof}
 Let $\overline{X}$ and $\overline{Y}$ be the projective closures of $X$ and $Y$, respectively. Thus $$h\colon \overline{X}\setminus L_{\infty}\rightarrow \overline{Y}\setminus L_{\infty}$$ 
 
 $$(1:z)\mapsto (1:h(z))$$
 Let $\{a_1,\cdots, a_r\}=\overline{X}\cap L_{\infty}$ and $\{b_1,\cdots, b_s\}=\overline{Y}\cap L_{\infty}$.

We write $a_i=(0:c_i)$ (resp. $b_j=(0:d_j)$) for $i=1,\cdots, r$ (resp. $j=1,\cdots, s$). Note that $C(X, \infty)=\bigcup{\mathbb{C}\cdot c_i}$ (resp. $C(Y, \infty)=\bigcup{\mathbb{C}\cdot d_j}$). Since $h$ is a blow-spherical homeomorphism at $\infty$, then $h$ induces a homeomorphism $\phi\colon C(X,\infty)\rightarrow C(Y,\infty)$ and, consequently, there is a bijection $\sigma\colon \{1,\cdots, r\}\rightarrow \{1,\cdots, s\}$ such that
$$\phi(\mathbb{C}\cdot c_i)=\mathbb{C}\cdot d_{\sigma(i)},$$
where $\phi\colon C(X,\infty)\rightarrow C(Y,\infty)$ is defined as follows
 \[\phi(v) = \left\{
  \begin{array}{lr}
    \|v\|\cdot \nu_{\infty}\left(\frac{v}{\|v\|} \right), &  v \neq 0\\
    0, &  v= 0.
  \end{array}
\right.
\]
and $\nu_{\infty}\colon C(X,\infty)\cap \mathbb{S}^{2m-1}\rightarrow C(Y,\infty)\cap \mathbb{S}^{2k-1} $ is the homeomorphism such that $h_{\infty}'(v,0)=\left(\nu_{\infty}(v),0\right)$.

Thus, we define $\overline{h}\colon \overline{X}\rightarrow \overline{Y}$ by
 \[\overline{h}(v) = \left\{
  \begin{array}{ll}
    h(v), &  v \in \overline{X}\setminus L_{\infty}\\
    b_{\sigma(i)}, &  v= a_i
  \end{array}
\right.
\]
Fix $i\in \{1,\cdots,r\}$ and let $(z_n:x_n)\in \overline{X}$ with $z_n\neq 0$ such that $(z_n:x_n)\rightarrow a_i=(0:c_i)$.   

Since $c_i=(c_i^1:...:c_i^m)\not=0$, we may assume, without loss of generality, $c_i^1\neq 0$. So, there exists $n_0\in \mathbb{N}$ such that $x_n^1\neq 0$ $\forall n\geq n_0$, where $x_n=(x_n^1:...:x_n^m)=(x_n^1:y_n)$. Then
$$w_n=\left( 1:\frac{x_n^1}{z_n}:\frac{y_n}{z_n}\right)\rightarrow (0:1:c).$$

Note that if $\{u_n\}\subset X$ such that $\frac{u_n}{\|u_n\|}\rightarrow u$ then there exists $n_0\in \mathbb{N}$ such that $u_n\in \bigcup_{i\in J}{X_i}$, $\forall n>n_0$ and $C(X_i,\infty)=L=\mathbb{C}\cdot u$, $\forall i\in J$. Moreover, $h(u_n)\in \bigcup_{j\in J'}{Y_j}$ such that $C(Y_j,\infty)=\phi(L)=L'$, $\forall j\in J'$. In fact, 
$$h_{\infty}'\left(\frac{u_n}{\|u_n\|},\frac{1}{\|u_n\|}\right)= \left(\frac{h(u_n)}{\|h(u_n)\|},\frac{1}{\|h(u_n)\|}\right)\rightarrow h_{\infty}'(u,0)$$
and, in particular, $\frac{h(u_n)}{\|h(u_n)\|}\rightarrow \phi(u)\in \phi(L)=L'=\mathbb{C}\cdot v$. 
This implies that 
\begin{eqnarray*}
\overline{h}(w_n) & = & \overline{h}\left(1:\frac{x_n^1}{z_n}:\frac{y_n}{z_n}\right)=\left(1:h\left(\frac{x_n^1}{z_n},\frac{y_n}{z_n}\right)\right) \rightarrow b_{\sigma(i)},
\end{eqnarray*} 
which shows that $\overline{h}$ is continuous. Similarly, we prove that $\overline{h}^{-1}$ is continuous as well.
Therefore, $\overline{h}:\overline{X}\rightarrow \overline{Y}$ is a homeomorphism.
\end{proof}

The next example shows that we can not remove the condition that $h$ is a blow-spherical homeomorphism at $\infty$ in Proposition \ref{homeoprojective}.

\begin{example}
Let $X=\{(x,y)\in \C^2;xy=0\}$ and $Y=\{(x,y)\in \C^2;(y-x^2)x=0\}$. Then $X$ and $Y$ are homeomorphic, but their projective closures $\overline{X}$ and $\overline{X}$ are not homeomorphic. Indeed, $(\overline{X},p)$ is irreducible for any $p\in \overline{X}\setminus \{(1:0:0)\}$ and $(\overline{Y},p)$ is reducible for any $p\in \{(1:0:0),(0:0:1)\}$ (see also Figures \ref{fig:complexbstreeinfinity_one} and \ref{fig:complexbstreeinfinity_two}). 
\end{example}

As we can see in the next example, the reciprocal of Proposition \ref{homeoprojective} is not true.
\begin{example}
Let $L=\{(x,y)\in \C^2;y=0\}$ and $C=\{(x,y)\in \C^2;x^2+y^2=1\}$. Then the projective closures $\overline{L}$ and $\overline{C}$ are smooth projective curves with genus zero. Therefore, $\overline{L}$ and $\overline{C}$ are diffeomorphic. However, $L$ and $C$ are not blow-spherical homeomorphic at infinity and, in particular, they are not blow-spherical homeomorphic (see also Figures \ref{fig:complexbstreeinfinity_three} and \ref{fig:complexbstreeinfinity_four}).
\end{example}


\section{Classification of complex algebraic curves under blow-spherical equivalence}

\subsection{Classification of complex algebraic curves at infinity}
We are going to prove that blow-spherical geometry at infinity of a complex algebraic curve determines and is determined by the blow-spherical tree at infinity.

\begin{definition}
Let $C\subset \mathbb{C}^n$ be a complex algebraic curve and let $X_1,\cdots , X_r$ be the irreducible components of the tangent cone at infinity $C(C,\infty)$.  The {\bf complex blow-spherical tree at infinity of $C$} is the rooted tree with a root with the label $\infty$, with vertices $D_i$'s corresponding to the lines $X_i$'s and we put edges joining each vertex $D_i$ to the root. Finally, for each direction at infinity $X_i$, we put, for each end $E_{i,j}$ of the curve $C$ which is tangent to the $X_{i}$, a new vertex $A_{i,j}$ and an edge joining $A_{i,j}$ to $D_i$ with weight $k_{E_{i,j}}(X_i)$.
\end{definition}

Some examples of complex blow-spherical trees at infinity are presented in Figures \ref{fig:complexbstreeinfinity_one},  \ref{fig:complexbstreeinfinity_two},  \ref{fig:complexbstreeinfinity_three} and  \ref{fig:complexbstreeinfinity_four}.

\begin{figure}[H]
\begin{tabular}{cc}
\begin{minipage}[c][6cm][c]{6cm}
	\centering
	\begin{tikzpicture}[line cap=round,line join=round,>=triangle 45,x=0.7cm,y=0.7cm]
		\clip(-5,-1) rectangle (5,6.2);
		\draw [line width=1pt] (0,0)-- (0,2);
		\draw [line width=1pt] (0,2)-- (1,4);
		\draw [line width=1pt] (0,2)-- (-1,4);
		\draw [line width=1pt] (0,0) circle (2pt);
		\begin{scriptsize}
			\draw [fill=black] (0,0) circle (2pt);
			\draw [fill=black] (0,2) circle (2pt);
			\draw [fill=black] (-1,4) circle (2pt);
			\draw [fill=black] (1,4) circle (2pt);
			\draw [fill=black] (1 ,3) node {\normalsize $1$};
			\draw [fill=black] (-1 ,3) node {\normalsize $2$};
			\draw [fill=black] (0.4,0) node {\normalsize $\infty$};
		\end{scriptsize}
	\end{tikzpicture}
	\caption{Complex blow-spherical at infinity of the curve $(y-x^2)x=0$.}
	\label{fig:complexbstreeinfinity_one}
  \end{minipage} 
  & 
  \begin{minipage}[c][6cm][c]{6cm}
	\centering
	\begin{tikzpicture}[line cap=round,line join=round,>=triangle 45,x=0.7cm,y=0.7cm]
		\clip(-5,-1) rectangle (5,6.2);
		\draw [line width=1pt] (0,0)-- (-1,2);
		\draw [line width=1pt] (0,0)-- (1,2);
		\draw [line width=1pt] (-1,2)-- (-1,4);
		\draw [line width=1pt] (1,2)-- (1,4);
		\draw [line width=1pt] (0,0) circle (2pt);
		\begin{scriptsize}
			\draw [fill=black] (0,0) circle (2pt);
			\draw [fill=black] (-1,4) circle (2pt);
			\draw [fill=black] (1,2) circle (2pt);
			\draw [fill=black] (-1,2) circle (2pt);
			\draw [fill=black] (1,4) circle (2pt);
			\draw [fill=black] (0.4 ,0) node {\normalsize $\infty$};
			\draw [fill=black] (-1.4 ,3) node {\normalsize $1$};
			\draw [fill=black] (1.4 ,3) node {\normalsize $1$};
		\end{scriptsize}
	\end{tikzpicture}
	\caption{complex blow-spherical tree at infinity of the curve $xy=0$.}
	\label{fig:complexbstreeinfinity_two}
\end{minipage}
\end{tabular}
\end{figure}

\bigskip

\begin{figure}[H]
\begin{tabular}{cc}
\begin{minipage}[c][6cm][c]{6cm}
	\centering
	\begin{tikzpicture}[line cap=round,line join=round,>=triangle 45,x=0.7cm,y=0.7cm]
		\clip(-5,-1) rectangle (5,6.2);
		\draw [line width=1pt] (0,0)-- (0,2);
		\draw [line width=1pt] (0,2)-- (0,4);
		\draw [line width=1pt] (0,0) circle (2pt);
		\begin{scriptsize}
			\draw [fill=black] (0,0) circle (2pt);
			\draw [fill=black] (0,4) circle (2pt);
			\draw [fill=black] (0,2) circle (2pt);
			\draw [fill=black] (0.4 ,0) node {\normalsize $\infty$};
			\draw [fill=black] (0.4 ,3) node {\normalsize $1$};
		\end{scriptsize}
	\end{tikzpicture}
	\caption{Real blow-spherical tree at infinity of the curve $y=0$.}
	\label{fig:complexbstreeinfinity_three}
  \end{minipage} 
  & 
  \begin{minipage}[c][6cm][c]{6cm}
	\centering
	\begin{tikzpicture}[line cap=round,line join=round,>=triangle 45,x=0.7cm,y=0.7cm]
		\clip(-5,-1) rectangle (5,6.2);
		\draw [line width=1pt] (0,0)-- (-1,2);
		\draw [line width=1pt] (0,0)-- (1,2);
		\draw [line width=1pt] (-1,2)-- (-1,4);
		\draw [line width=1pt] (1,2)-- (1,4);
		\draw [line width=1pt] (0,0) circle (2pt);
		\begin{scriptsize}
			\draw [fill=black] (0,0) circle (2pt);
			\draw [fill=black] (-1,4) circle (2pt);
			\draw [fill=black] (1,2) circle (2pt);
			\draw [fill=black] (-1,2) circle (2pt);
			\draw [fill=black] (1,4) circle (2pt);
			\draw [fill=black] (0.4 ,0) node {\normalsize $\infty$};
			\draw [fill=black] (-1.4 ,3) node {\normalsize $1$};
			\draw [fill=black] (1.4 ,3) node {\normalsize $1$};
		\end{scriptsize}
	\end{tikzpicture}
	\caption{Real blow-spherical tree at infinity of the curve $x^2+y^2=1$.}
	\label{fig:complexbstreeinfinity_four}
\end{minipage}
\end{tabular}
\end{figure}

\bigskip

\begin{theorem}\label{thmequivinfinity}
 Let $C$ and $\Gamma$ be two complex algebraic curves. Let $X_1,...,X_r$ (resp. $Y_1,...,Y_s$) be the irreducible components of $C(C,\infty)$ (resp. $C(\Gamma,\infty)$). We index the ends of $C$ and $\Gamma$ in such a way: 
 $$C\setminus B_R=\bigcup_{i=1}^r\bigcup_{j=1}^{e_i}{E_{ij}}$$
 and
 $$\Gamma\setminus B_R=\bigcup_{l=1}^s\bigcup_{m=1}^{\widetilde{e}_l}{F_{lm}},$$
 satisfying $C(E_{ij},\infty)=X_i$ for all $i\in\{1,\cdots,r\}$ and $C(F_{lm},\infty)=Y_l$ for all $l\in\{1,\cdots,s\}$. Then, the following statements are equivalent:
 \begin{enumerate}
  \item [(1)] $C$ and $\Gamma$ are blow-spherical homeomorphic at infinity;
  \item [(2)] there exists $\sigma\colon \{1,\cdots,r\}\rightarrow \{1,\cdots,s\}$ such that for each $i$, there exists $\sigma_i\colon \{1,\cdots,e_i\}\rightarrow \{1,\cdots, \widetilde{e}_{\sigma(i)}\}$ such that $k_{E_{ij},\infty}(X_i)=k_{F_{\sigma(i)\sigma_i(j)},\infty}(Y_{\sigma(i)})$;
  \item [(3)] The blow-spherical trees at infinity of $C$ and $\Gamma$ are isomorphic;
  \item [(4)] $C$ and $\Gamma$ are strongly blow-spherical homeomorphic at infinity.
 \end{enumerate}
\end{theorem}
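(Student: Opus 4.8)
The plan is to prove the cycle of implications $(1)\Rightarrow(2)\Rightarrow(3)\Rightarrow(4)\Rightarrow(1)$, with the implications $(4)\Rightarrow(1)$ and $(3)\Leftrightarrow(2)$ being essentially formal.

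\textbf{$(1)\Rightarrow(2)$.} Suppose $\varphi\colon C\to\Gamma$ is a blow-spherical homeomorphism at infinity. By Proposition \ref{coneinvarianteblow}, $C(C,\infty)$ and $C(\Gamma,\infty)$ are blow-spherical homeomorphic at $0$; since these are unions of complex lines through the origin, the induced homeomorphism $\phi=d_\infty\varphi$ matches lines to lines and hence yields a bijection $\sigma\colon\{1,\dots,r\}\to\{1,\dots,s\}$ with $\phi(X_i)=Y_{\sigma(i)}$ (in particular $r=s$). It remains to match the ends lying over each $X_i$. The key point is that the restriction $\varphi'\colon C'_\infty\to\Gamma'_\infty$ is a homeomorphism carrying $\partial C'_\infty$ onto $\partial\Gamma'_\infty$; working near the (finitely many) points $(v,0)\in\partial C'_\infty$ with $v/\|v\|$ in the sphere-slice of $X_i$, I would argue that the ends $E_{ij}$ tangent to $X_i$ correspond, via $\varphi'$, to the germs of $\Gamma'_\infty$ at the boundary point $\phi$ sends $(v,0)$ to, and these germs are exactly the ends $F_{\sigma(i)m}$ tangent to $Y_{\sigma(i)}$. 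This gives the bijections $\sigma_i$. Finally, the equality of weights $k_{E_{ij},\infty}(X_i)=k_{F_{\sigma(i)\sigma_i(j)},\infty}(Y_{\sigma(i)})$ is precisely the statement that relative multiplicities are preserved, which is Proposition \ref{propinvarianciamultiplicidaderelativa} applied locally at the relevant simple points of $\partial C'_\infty$ (using Remark \ref{remarksimplepointdense} to see that a generic point of the sphere-slice of $X_i$, restricted to the part of $C'_\infty$ coming from the single end $E_{ij}$, is simple with $k_{C,\infty}$-value equal to $k_{E_{ij},\infty}(X_i)$).

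\textbf{$(2)\Leftrightarrow(3)$.} This is a direct translation. An isomorphism of the blow-spherical trees must send the root to the root, hence the depth-one vertices $\{D_i\}$ bijectively to $\{D_l'\}$ (giving $\sigma$), and then for each $i$ it sends the leaves $\{A_{ij}\}_j$ hanging from $D_i$ bijectively to the leaves hanging from $D_{\sigma(i)}'$ (giving $\sigma_i$), and tree isomorphisms preserve edge weights, which are exactly the $k_{E_{ij},\infty}(X_i)$. Conversely the data in (2) assembles into such a tree isomorphism. I would just spell this out in a sentence or two.

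\textbf{$(3)\Rightarrow(4)$, i.e.\ construction of a strong blow-spherical homeomorphism at infinity.} This is the substantive constructive step and the main obstacle. Using Proposition \ref{propconicalstru}, outside a large ball each end $E_{ij}$ is, up to a radius-preserving homeomorphism, $\mathrm{Cone}_\infty$ of a link, and the link of $E_{ij}$ — since $E_{ij}$ is a complex curve end tangent to the line $X_i$ with relative multiplicity $k:=k_{E_{ij},\infty}(X_i)$ — is a circle that wraps $k$ times around the corresponding circle in the sphere-slice of $X_i$. The plan is: choose orientation-preserving (hence smooth, after a standard smoothing near the cone point at infinity) parametrizations of these links, use the tree isomorphism to pair $E_{ij}$ with $F_{\sigma(i)\sigma_i(j)}$, note the paired ends have links wrapping the \emph{same} number $k$ of times around circles in $X_i$ and $Y_{\sigma(i)}$ respectively, transport via the linear isomorphism $\C\cdot c_i\to\C\cdot d_{\sigma(i)}$ (composed with a rotation to realize the combinatorial matching), and cone off. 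One must check that the resulting map, after the spherical blow-up, extends continuously and diffeomorphically across the boundary $\partial C'_\infty$ — this is where the fact that we're working at infinity with conical structure, plus the $k$-to-$k$ matching of winding, makes the boundary map $\nu$ well-defined and a $C^1$ diffeomorphism onto $\partial\Gamma'_\infty$ away from $\mathrm{Sing}_1$. I expect the bookkeeping of compatibly gluing the finitely many ends while keeping smoothness (and the identification of $\mathrm{Sing}_1$) to be the delicate part; the paper likely handles it via explicit normal-form parametrizations $t\mapsto(t^k,\dots)$ along each branch (this is presumably where Theorem \ref{thm:normal_forms} enters).

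\textbf{$(4)\Rightarrow(1)$.} Immediate: a strong blow-spherical homeomorphism at $\infty$ is in particular a blow-spherical homeomorphism at $\infty$, by Definition \ref{def:strong_diffeo}. This closes the cycle.
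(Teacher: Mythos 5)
Your proposal is correct and follows essentially the same route as the paper: the cycle $(1)\Rightarrow(2)\Rightarrow(3)\Rightarrow(4)\Rightarrow(1)$, with $(1)\Rightarrow(2)$ obtained from Propositions \ref{coneinvarianteblow} and \ref{propinvarianciamultiplicidaderelativa}, $(2)\Leftrightarrow(3)$ as a formal translation, and $(3)\Rightarrow(4)$ built end-by-end. Your guess about the mechanism for $(3)\Rightarrow(4)$ is exactly what the paper does: after linear changes of coordinates sending $X_i$ and $Y_{\sigma(i)}$ to a common line, it takes Puiseux parametrizations at infinity $\psi(t)=(\phi(t),t^k)$ and $\widetilde\psi(t)=(\widetilde\phi(t),t^k)$ of the paired ends (same $k$ by the matching of relative multiplicities) and uses $\widetilde\psi\circ\psi^{-1}$, checking that its strict transform extends by the identity on the boundary, which also makes the finitely many pieces glue.
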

\begin{proof}
 By identifying $\C^n$ with $\C^n\times \{0\}\subset \C^n\times \C^k=\C^{n+k}$, for any $k\in \mathbb{N}$, we may assume that $X$ and $Y$ are in the same $\C^n$.
 
$ (1)\Rightarrow (2)$. Let $h\colon C\setminus K\to \Gamma\setminus \tilde K$ be a blow-spherical homeomorphism, for some compact subsets $K$ and $\tilde K$. By Proposition \ref{coneinvarianteblow}, $h$ induces a homeomorphism $d_{\infty}h\colon C(C,\infty)\to C(\Gamma,\infty)$ and, consequently, $d_{\infty}h$ sends each irreducible component of $C(C,\infty)$ onto an irreducible component of $C(\Gamma,\infty)$. Thus, there is a bijection $\sigma\colon \{1,\cdots,r\}\rightarrow \{1,\cdots,s\}$ such that $Y_{\sigma(i)}=d_{\infty}h(X_i)$ for all $i\in \{1,\cdots,r\}$. For each $i\in \{1,\cdots,r\}$, by Proposition \ref{propinvarianciamultiplicidaderelativa}, $\{k_{E_{i,j},\infty}(X_i);j=1,...,e_i\}=\{k_{F_{\sigma(i),m},\infty}(Y_{\sigma(i)});m=1,...,\tilde e_{\sigma(i)}\}$. Then we can choose a bijection $\sigma_i\colon \{1,...,e_i\}\to \{1,...,\tilde e_{\sigma(i)}\}$ such that $k_{E_{i,j},\infty}(X_i)=k_{F_{\sigma(i),\sigma_i(j)},\infty}(Y_{\sigma(i)})$.

$ (2)\Rightarrow (3)$. We assume that there is a bijection $\sigma\colon \{1,\cdots,r\}\rightarrow \{1,\cdots,s\}$ such that for each $i\in \{1,\cdots,r\}$ there is a bijection $\sigma_i\colon \{1,\cdots,e_i\}\rightarrow \{1,\cdots,\widetilde{e}_{\sigma(i)}\}$ such that $k_{E_{ij},\infty}(X_i)=k_{F_{\sigma(i)\sigma_i(j)},\infty}(Y_{\sigma(i)})$. Then we define the isomorphism between the blow-spherical trees at infinity of $C$ and $\Gamma$ by sending the root on the root and sending each vertex $D_i$ (corresponding to the line $X_i$) to the vertex $\widetilde{D}_{\sigma(i)}$ (corresponding to the line $Y_{\sigma(i)}$), sending the edge joining the root and $D_i$ to the edge joining the root and $\widetilde{D}$, sending each vertex $A_{i,j}$ (corresponding to the end $E_{i,j}$) to $\widetilde{A}_{\sigma(i),\sigma_i(j)}$ (corresponding to the end $F_{\sigma(i),\sigma_i(j)}$) and sending the edge joining $A_{i,j}$ to $D_i$ to the edge joining $\widetilde{A}_{\sigma(i),\sigma_i(j)}$ and $\widetilde{D}$. 

$ (3)\Rightarrow (4)$. Let $\Psi$ be an isomorphism between the complex blow-spherical trees at infinity of $C$ and $\Gamma$. So, we have $\Psi(D_i)=\widetilde{D}_{\sigma(i)}$ and $\Psi(A_{i,j})=\widetilde{A}_{\sigma(i),\sigma_i(j)}$ such that $k_{E_{ij},\infty}(X_i)=k_{F_{\sigma(i)\sigma_i(j)},\infty}(Y_{\sigma(i)})$. 

Fix $i\in \{1,\cdots,r\}$. Let $T_{i}\colon \mathbb{C}^n\rightarrow \mathbb{C}^n$ and $L_{\sigma(i)}\colon \mathbb{C}^n\rightarrow \mathbb{C}^n$ be linear isomorphisms such that $T(X_i)=L_{\sigma(i)}(Y_{\sigma(i)})=L:=\{(0,\xi)\in \C^{n-1}\times \C;\xi \in \mathbb{C}\}$. Fix $j\in \{1,\cdots,e_i\}$. Let $E=T_i(E_{ij})$ and $F=L_{\sigma(i)}(F_{\sigma(i)\sigma_i(j)})$. Note that $T_i\colon E_{ij}\to E$ and $L_{\sigma(i)}\colon F_{\sigma(i)\sigma_i(j)}\to F$ are strong blow-spherical homeomorphisms at infinity. Thus, by Proposition \ref{propinvarianciamultiplicidaderelativa}, $k_{E_{ij},\infty}(X_i)=k_{E,\infty}(L)$ and $k_{F_{\sigma(i)\sigma_i(j)},\infty}(Y_{\sigma(i)})=k_{F,\infty}(L)$.

\begin{claim} \label{claim:propequivblowdeg}
$E$ and $F$ are strongly blow-spherical homeomorphic at infinity
\end{claim}
\begin{proof} 
Let $\psi\colon \mathbb{C}\setminus D_{\epsilon}\rightarrow E$ and $\widetilde{\psi}\colon\mathbb{C}\setminus D_{\epsilon}\rightarrow F$ be Puiseux's parametrizations at infinity, for some disc $D_{\epsilon}\subset \mathbb{C}$. Since $k=k_{E_{ij},\infty}(L)=k_{F_{\sigma(i)\sigma_i(j)},\infty}(L)$, there are holomorphic mappings $\phi,\widetilde{\phi}\colon\mathbb{C}\setminus D_{\epsilon}\to \C^{n-1}$ satisfying $\lim\limits_{t\to \infty} \frac{\phi(t)}{t^k}=\lim\limits_{t\to \infty} \frac{\widetilde{\phi}(t)}{t^k}=0$
and such that $\psi$ and $\widetilde{\psi}$ are homeomorphisms given by
 $$\psi(t)=(\phi(t),t^{k})$$
and 
 $$\widetilde{\psi}(t)=(\widetilde{\phi}(t),t^{k}).
 $$
 
 Now define $\varphi\colon E\rightarrow F$ given by $\varphi=\widetilde{\psi}\circ \psi^{-1}$. Note that $\varphi$ is a diffeomorphism. Thus, we have to prove that $\varphi$ is a blow-spherical homeomorphism at infinity. Indeed, let $(x,0)\in \partial C'_{\infty}$ and for any sequence $\{z_n\}_{n\in \mathbb{N}}\subset E'\setminus \partial {E'}_{\infty}$ such that $\lim_{n\rightarrow +\infty}{z_n}=(x,0)$ and for all $n$ we write $z_n=(x_n,t_n)$ and $s_n=\psi^{-1}(\frac{x_n}{t_n})$. Then, we have
 \begin{eqnarray*}
  \varphi'_{\infty}(z_n) & = & \left(\frac{\varphi(\frac{x_n}{t_n})}{\|\varphi(\frac{x_n}{t_n})\|},\frac{1}{\|\varphi(\frac{x_n}{t_n})\|}\right)\\
  & = & \left(\frac{(\widetilde{\phi}(s_n),s_n^{k})}{\|(\widetilde{\phi}(s_n),s_n^{k})\|},\frac{1}{\|(\widetilde{\phi}(s_n),s_n^{k})\|}\right).
 \end{eqnarray*}
On the other hand, $\frac{x_n}{t_n}=(\phi(s_n),s_n^{k})$ and consequently we have
$$z_n=\left(\frac{(\phi(s_n),s_n^{k})}{\|(\phi(s_n),s_n^{k})\|},\frac{1}{\|(\phi(s_n),s_n^{k})\|}\right).$$
Finally, since $\lim\limits_{t\to \infty} \frac{\phi(t)}{t^k}=\lim\limits_{t\to \infty} \frac{\widetilde{\phi}(t)}{t^k}=0$, we have 
$$\lim_{n\rightarrow +\infty}{\varphi'(z_n)}=\lim_{n\rightarrow +\infty}{z_n}=(x,0).$$
Therefore, $\varphi$ is a blow-spherical homeomorphism at infinity such that $\varphi'|_{\partial {E'}_{\infty}}=\mbox{id}_{\partial {E'}_{\infty}}$.
\end{proof}

Since $\varphi\colon E\to F$ is a strong blow-spherical homeomorphism at infinity, then $\varphi_{ij}:=L_{\sigma(i)}\circ \varphi\circ T_{i}^{-1}\colon E_{ij}\to F_{\sigma(i)\sigma_i(j)}$ is a strong blow-spherical homeomorphism at infinity. 

Now, we define $h\colon C\setminus  B_R\to \Gamma \setminus B_R$ by $h(x)=\varphi_{ij}(x)$ if $x\in E_{ij}$. Since each $\varphi_{ij}$ is a strong blow-spherical homeomorphism and for each $i\in\{1,...,r\}$, $\varphi_{ij}'|_{\partial {E_{ij}'}_{\infty}}=\varphi_{il}'|_{\partial {E_{il}'}_{\infty}}$ for all $j,l\in\{1,...,e_i\}$, we obtain that $h$ is a strong blow-spherical homeomorphism.

$ (4)\Rightarrow (1)$. Trivial.
\end{proof}

\subsection{Normal forms for the classification at infinity}


Let $p_1\colon \mathbb Z_{>0}\times \mathcal P(\mathbb Z_{>0})\to \mathbb Z_{>0}$ and $p_2\colon \mathbb Z_{>0}\times \mathcal P(\mathbb Z_{>0})\to \mathcal P(\mathbb Z_{>0})$ be the canonical projections, where $\mathcal P(\mathbb Z_{>0})$ denotes the power set of $\mathbb Z_{>0}$ and $\mathbb Z_{>0}=\{n\in \mathbb Z; n>0\}$;
Let $\mathcal A$ the subset of $\mathbb \mathbb Z_{>0}\times \mathcal P(\mathbb Z_{>0})$ formed by the all finite and non-empty subsets $A$ satisfying the following:
\begin{itemize}
\item [i)] $p_1(A)=\{1,...,r\}$ for some $r\in \mathbb Z_{>0}$; 
\item [ii)] $p_2(p_1^{-1}(i ))=\{k_{i, 1},...,k_{i ,e_{i }}\}\subset \mathbb Z_{>0}$ and $k_{i, j}\leq k_{i, j+1}$ for all $j\in\{1,...,e_{i}-1\}$ and for all $i\in \{1,...,r\}$.
\item [iii)] $e_{i }\leq e_{i +1}$ for all $i \in \{1,...,r-1\}$.
\end{itemize}
For a set $A\in \mathcal A$ as above, we define the realization of $A$ to be the curve
$$
X_A= \{(x,y)\in \C^2; \displaystyle\prod \limits_{i=1}^r\displaystyle\prod \limits_{j=1}^{e_i}(j(y-i x)^{k_{i, j}-1}-(y+i x)^{k_{i, j}})=0\}.
$$
Thus, it follows from Theorem \ref{thmequivinfinity} and definition of $\mathcal A$, the following classification result:
\begin{theorem}\label{thm:normal_forms}
For each complex algebraic curve $X\subset \C^n$, there exists a unique set $A\in \mathcal A$ such that $X_A$ and $X$ are blow-spherical homeomorphic at infinity.
\end{theorem}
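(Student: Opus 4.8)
The plan is to show that the family $\{X_A\}_{A\in\mathcal A}$ provides exactly one representative in each blow-spherical-at-infinity equivalence class of complex algebraic curves, by matching the combinatorial data $A$ with the blow-spherical tree at infinity and then invoking Theorem \ref{thmequivinfinity}.

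First I would analyze the geometry at infinity of a realization $X_A$. Writing $X_A$ as the zero set of $\prod_{i=1}^r\prod_{j=1}^{e_i} f_{i,j}$ with $f_{i,j}(x,y)=j(y-ix)^{k_{i,j}-1}-(y+ix)^{k_{i,j}}$, I would check that the tangent cone at infinity of the branch $\{f_{i,j}=0\}$ is the line $X_i:=\{y=ix\}$ (the highest-degree part of $f_{i,j}$ is $-(y+ix)^{k_{i,j}}$ when $k_{i,j}\ge 2$, but one must be careful: the relevant direction at infinity is governed by which factor dominates; the construction is rigged so that $y-ix$ is the small quantity, i.e. the end is tangent to $X_i$). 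Then I would compute, for the end $E_{i,j}$ corresponding to $f_{i,j}=0$, a Puiseux parametrization at infinity and read off the relative multiplicity $k_{E_{i,j},\infty}(X_i)=k_{i,j}$: near infinity, setting $y-ix$ small, the equation $j(y-ix)^{k_{i,j}-1}=(y+ix)^{k_{i,j}}\sim (2ix)^{k_{i,j}}$ forces $y-ix\sim c\,x^{k_{i,j}/(k_{i,j}-1)}$ — this would need to be redone carefully so that the exponent matching with Claim \ref{claim:propequivblowdeg}'s normal form $\psi(t)=(\phi(t),t^k)$ yields precisely $k$ sheets over the tangent line. The upshot should be: the $r$ lines $X_1,\dots,X_r$ are distinct (as $i\ne i'$ give distinct slopes), over $X_i$ there are exactly $e_i$ ends, and the multiset of their relative multiplicities is $\{k_{i,1},\dots,k_{i,e_i}\}$. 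Hence the blow-spherical tree at infinity of $X_A$ is the tree determined by $A$, and conditions (ii)–(iii) in the definition of $\mathcal A$ (the ordering of the $k_{i,j}$ and of the $e_i$) are exactly a normal-form choice that rigidifies the unordered tree data.

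Next, for existence: given an arbitrary complex algebraic curve $X\subset\C^n$, let $X_1',\dots,X_r'$ be the irreducible components of $C(X,\infty)$ (complex lines), let $e_i$ be the number of ends of $X$ tangent to $X_i'$, and let $k_{i,1}\le\cdots\le k_{i,e_i}$ be their relative multiplicities; reorder the $i$'s so that $e_1\le\cdots\le e_r$. Setting $k_{i,j}$ as above produces a set $A\in\mathcal A$, and by the computation of the previous paragraph the tree at infinity of $X_A$ is isomorphic to that of $X$. By the equivalence $(3)\Leftrightarrow(1)$ in Theorem \ref{thmequivinfinity}, $X$ and $X_A$ are blow-spherical homeomorphic at infinity. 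For uniqueness: if $X_A$ and $X_B$ are blow-spherical homeomorphic at infinity, then again by Theorem \ref{thmequivinfinity} their trees at infinity are isomorphic, so they have the same number $r$ of tangent directions at infinity, the same sequence $(e_1,\dots,e_r)$ after the normalizing reordering, and the same sorted multiplicity sequences $(k_{i,1},\dots,k_{i,e_i})$; by conditions (i)–(iii) this forces $A=B$ as elements of $\mathbb Z_{>0}\times\mathcal P(\mathbb Z_{>0})$.

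The main obstacle I expect is the explicit tangent-cone and relative-multiplicity computation for the realization $X_A$ — verifying that the somewhat ad hoc polynomials $f_{i,j}$ really do produce an end tangent to $X_i$ with relative multiplicity exactly $k_{i,j}$, and in particular that no unexpected extra ends, extra tangent directions, or coincidences among branches arise (e.g. one must confirm the branches for different $(i,j)$ stay genuinely separate near infinity, and that $k_{i,j}=1$ is handled correctly since then $f_{i,j}=j-(y+ix)$ is a line). Everything downstream is then a bookkeeping translation between the set-theoretic encoding $\mathcal A$ and the isomorphism type of the rooted weighted tree, combined with the already-proven Theorem \ref{thmequivinfinity}.
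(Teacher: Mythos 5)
Your proposal is correct and follows exactly the route the paper intends (the paper offers no argument beyond citing Theorem \ref{thmequivinfinity} and the definition of $\mathcal A$): compute the blow-spherical tree at infinity of the normal form $X_A$ and apply the equivalence of (1) and (3) in Theorem \ref{thmequivinfinity} for both existence and uniqueness. One correction to the computation you yourself flagged as needing care: since the leading form of $f_{i,j}$ is $-(y+ix)^{k_{i,j}}$, the tangent line at infinity of $\{f_{i,j}=0\}$ is $\{y+ix=0\}$ rather than $\{y=ix\}$, and after the linear change $u=y+ix$, $v=y-ix$ the branch $jv^{k_{i,j}-1}=u^{k_{i,j}}$ is parametrized at infinity by $v=t^{k_{i,j}}$, $u=j^{1/k_{i,j}}t^{k_{i,j}-1}$, which is precisely the normal form used in Claim \ref{claim:propequivblowdeg} and yields relative multiplicity $k_{i,j}$ as your upshot asserts.
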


\begin{remark}\label{rem:plane_spacial_are_same}
Theorem \ref{thm:normal_forms} says in particular that any spacial algebraic curve is blow-spherical homeomorphic at infinity to a plane algebraic curve.
\end{remark}

\subsection{Global classification of complex algebraic curves}

\begin{definition}
Let $C\subset \mathbb{C}^n$ be a complex algebraic curve and let $p_1,\cdots , p_s$ be the singular points of $C$. 
 The {\bf complex blow-spherical tree of $C$} is the rooted tree with a root corresponding to the curve, with vertices $C_j's$ corresponding to the irreducible components of the curve $C$, we put edges joining each vertex $C_j$ equipped with a weight given by the Euler characteristic of the corresponding irreducible component. Fixed $j$, for each singular point $p_i$ of $C$ such that $p_i\in C_j$, we add a new vertex with the label $i$, we add also a new vertex with the label $\infty$ and we put edges joining each such a new vertex and $C_j$. For each $i\in\{1,..,s\}$, let $X_{i,1},...,X_{i,d_i}$ be the irreducible components of $C(C,p_i)$. Let $X_{\infty,1},...,X_{\infty,d_{\infty}}$ be the irreducible components of $C(C,\infty)$. We put a new vertex with the label $m$ for each tangent direction $X_{\infty,m}$ satisfying $X_{\infty,m}\subset C(C_j,\infty)$ and we put edges joining each vertex $m$ with the vertex $\infty$. If $p_i\in C_j$, then we put a new vertex with the label $m$ for each tangent direction $X_{i,m}$ satisfying $X_{i,m}\subset C(C_j,p_i)$ and we put edges joining each vertex $m$ with the vertex corresponding to $p_i$. Finally, for each irreducible component  $C_{j,p_i}^l$ (resp. for each end $E_{jl}$) of the germ $(C_j,p_i)$ (resp. of $C_j$) which is tangent to $X_{i,m}$ (resp. $X_{\infty,m}$), we put a new vertex and an edge joining this new vertex and $m$ equipped with the weight given by the relative multiplicity $k_{C_{j,q_l}^l,p_i}(X_{i,m})$ (resp. $k_{E_{jl},\infty}(X_{\infty,m})$).
\end{definition}

Some examples of complex blow-spherical trees are presented in Figures \ref{fig:complexbstree} and \ref{fig:complexbstree_two}.
 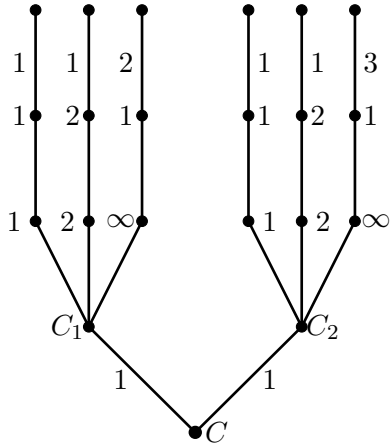
\begin{figure}[h]
	\centering
	\begin{tikzpicture}[line cap=round,line join=round,>=triangle 60,x=0.7cm,y=0.7cm]
		\clip(-5,-1) rectangle (5,8.2);
		\draw [line width=1pt] (0,0)-- (2,2);
		\draw [line width=1pt] (0,0)-- (-2,2);
		\draw [line width=1pt] (2,2)-- (1,4);
		\draw [line width=1pt] (2,2)-- (2,4);
		\draw [line width=1pt] (2,2)-- (3,4);
		\draw [line width=1pt] (-2,2)-- (-3,4);
		\draw [line width=1pt] (-2,2)-- (-2,4);
		\draw [line width=1pt] (-2,2)-- (-1,4);
		\draw [line width=1pt] (-1,4)-- (-1,6);
		\draw [line width=1pt] (-2,4)-- (-2,6);
		\draw [line width=1pt] (-3,4)-- (-3,6);
		\draw [line width=1pt] (1,4)-- (1,6);
		\draw [line width=1pt] (2,4)-- (2,6);
		\draw [line width=1pt] (3,4)-- (3,6);
		\draw [line width=1pt] (-1,6)-- (-1,8);
		\draw [line width=1pt] (-2,6)-- (-2,8);
		\draw [line width=1pt] (-3,6)-- (-3,8);
		
		\draw [line width=1pt] (1,6)-- (1,8);
		\draw [line width=1pt] (2,6)-- (2,8);
		\draw [line width=1pt] (3,6)-- (3,8);		
		
		\draw [line width=1pt] (0,0) circle (2pt);
		\begin{scriptsize}
			\draw [fill=black] (0,0) circle (2pt);
			\draw [fill=black] (2,2) circle (2pt);
			\draw [fill=black] (-2,2) circle (2pt);
			\draw [fill=black] (1,4) circle (2pt);
			\draw [fill=black] (2,4) circle (2pt);
			\draw [fill=black] (3,4) circle (2pt);
			\draw [fill=black] (-3,4) circle (2pt);
			\draw [fill=black] (-2,4) circle (2pt);
			\draw [fill=black] (-1,4) circle (2pt);
			\draw [fill=black] (-1,6) circle (2pt);
			\draw [fill=black] (-2,6) circle (2pt);
			\draw [fill=black] (-1,8) circle (2pt);
			\draw [fill=black] (-2,8) circle (2pt);
			\draw [fill=black] (-3,8) circle (2pt);
			\draw [fill=black] (1,8) circle (2pt);
			\draw [fill=black] (2,8) circle (2pt);
			\draw [fill=black] (3,8) circle (2pt);
			
			\draw [fill=black] (-3,6) circle (2pt);
			\draw [fill=black] (1,6) circle (2pt);
			\draw [fill=black] (2,6) circle (2pt);
			\draw [fill=black] (3,6) circle (2pt);
			\draw [fill=black] (0.4 ,0) node {\normalsize $C$};
			\draw [fill=black] (-1.4 ,1) node {\normalsize $1$};
			\draw [fill=black] (1.4 ,1) node {\normalsize $1$};
			\draw [fill=black] (-2.4 ,2) node {\normalsize $C_1$};
			\draw [fill=black] (2.4 ,2) node {\normalsize $C_2$};
			\draw [fill=black] (3.4 ,4) node {\normalsize $\infty$};
			\draw [fill=black] (2.4 ,4) node {\normalsize $2$};
			\draw [fill=black] (1.4 ,4) node {\normalsize $1$};
			\draw [fill=black] (-3.4 ,4) node {\normalsize $1$};
			\draw [fill=black] (-2.4 ,4) node {\normalsize $2$};
			\draw [fill=black] (-1.4 ,4) node {\normalsize $\infty$};
			\draw [fill=black] (-1.3 ,7) node {\normalsize $2$};
			\draw [fill=black] (-2.3 ,7) node {\normalsize $1$};
			\draw [fill=black] (-3.3 ,7) node {\normalsize $1$};
			\draw [fill=black] (1.3 ,7) node {\normalsize $1$};
			\draw [fill=black] (2.3 ,7) node {\normalsize $1$};
			\draw [fill=black] (3.3 ,7) node {\normalsize $3$};
			\draw [fill=black] (-1.3 ,6) node {\normalsize $1$};
			\draw [fill=black] (-2.3 ,6) node {\normalsize $2$};
			\draw [fill=black] (-3.3 ,6) node {\normalsize $1$};
			\draw [fill=black] (1.3 ,6) node {\normalsize $1$};
			\draw [fill=black] (2.3 ,6) node {\normalsize $2$};
			\draw [fill=black] (3.3 ,6) node {\normalsize $1$};
		\end{scriptsize}
	\end{tikzpicture}
	\caption{Complex blow-spherical tree of the curve $(y-x^2)(y-x^3)=0$.}
	\label{fig:complexbstree}
\end{figure}

 \begin{figure}[h]
\centering
	\begin{tikzpicture}[line cap=round,line join=round,>=triangle 60,x=0.7cm,y=0.7cm]
		\clip(-5,-1) rectangle (5,8.2);
		\draw [line width=1pt] (0,0)-- (0,2);
		\draw [line width=1pt] (0,2)-- (1,4);
		\draw [line width=1pt] (0,2)-- (-1,4);
		\draw [line width=1pt] (1,4)-- (1,6);
		\draw [line width=1pt] (-1,4)-- (-1,6);
		\draw [line width=1pt] (-1,6)-- (-1,8);
		\draw [line width=1pt] (1,6)-- (1,8);
			
		\draw [line width=1pt] (0,0) circle (2pt);
		\begin{scriptsize}
			\draw [fill=black] (0,0) circle (2pt);
			\draw [fill=black] (0,2) circle (2pt);
			\draw [fill=black] (1,4) circle (2pt);
			\draw [fill=black] (-1,6) circle (2pt);
			\draw [fill=black] (-1,8) circle (2pt);
			\draw [fill=black] (-1,4) circle (2pt);
			\draw [fill=black] (1,8) circle (2pt);
			
			\draw [fill=black] (1,6) circle (2pt);
			\draw [fill=black] (0.4 ,0) node {\normalsize $C$};
			\draw [fill=black] (0.4 ,2) node {\normalsize $C_1$};
			\draw [fill=black] (-1.4 ,4) node {\normalsize $1$};
			\draw [fill=black] (1.4 ,4) node {\normalsize $\infty$};
			\draw [fill=black] (1.4 ,6) node {\normalsize $1$};
			\draw [fill=black] (1.4 ,7) node {\normalsize $3$};
			\draw [fill=black] (-1.4 ,6) node {\normalsize $1$};
			\draw [fill=black] (-1.4 ,7) node {\normalsize $2$};
			\draw [fill=black] (0.4,1) node {\normalsize $1$};
			
		\end{scriptsize}
	\end{tikzpicture}
	\caption{Complex blow-spherical tree of the curve $y^2-x^3=0$.}
	\label{fig:complexbstree_two}
\end{figure}
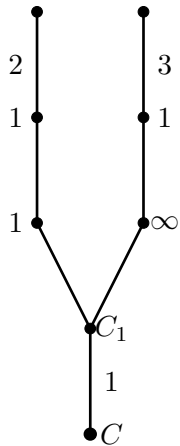

\begin{theorem}\label{main_theorem}
 Let $C=\bigcup_{i\in I}{C_i}$ and $\widetilde{C}=\bigcup_{j\in J}{\widetilde{C}_j}$ be two complex algebraic curves, where $\{{C_i}\}_{i\in I}$ and $\{\widetilde{C}_j\}_{j\in J}$ are the irreducible components of $C$ and $\widetilde{C}$, respectively. The following statements are equivalent:
 \begin{enumerate}
  \item [(1)] $C$ and $\widetilde{C}$ are blow-spherical homeomorphic;
  \item [(2)] There is an isomorphism between the complex blow-spherical trees of $C$ and $\widetilde C$;
  \item [(3)] $C$ and $\widetilde{C}$ are strongly blow-spherical homeomorphic.
 \end{enumerate}

\end{theorem}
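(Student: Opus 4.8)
The plan is to prove the theorem by establishing the cycle of implications $(3)\Rightarrow(1)\Rightarrow(2)\Rightarrow(3)$, with the bulk of the work falling on $(2)\Rightarrow(3)$, since $(3)\Rightarrow(1)$ is immediate from the definitions and $(1)\Rightarrow(2)$ is a matter of checking that a global blow-spherical homeomorphism respects all the data recorded by the tree. For $(1)\Rightarrow(2)$, I would argue as follows: a blow-spherical homeomorphism $h\colon C\to\widetilde C$ is in particular a homeomorphism of the underlying curves, so it sends irreducible components to irreducible components and singular points to singular points (using that singular points of a complex curve are topologically detectable, plus \cite{Sampaio:2020}). Restricting $h$ to each irreducible component $C_i$ gives a blow-spherical homeomorphism onto $\widetilde C_{\tau(i)}$, hence a homeomorphism of the (one-point-compactified or projective, via Proposition \ref{homeoprojective}) completions, so the Euler characteristics match; this assigns the vertex weights on the $C_i$-level. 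At each singular point $p$ and at $\infty$, Proposition \ref{coneinvarianteblow} gives an induced blow-spherical homeomorphism of tangent cones, hence a bijection of tangent directions (lines) respecting which component each direction belongs to, and Proposition \ref{propinvarianciamultiplicidaderelativa} gives equality of the relative multiplicities labelling the leaf edges. Assembling these bijections produces the desired tree isomorphism.

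The heart of the proof is $(2)\Rightarrow(3)$: from a tree isomorphism we must build an actual strong blow-spherical homeomorphism $h\colon C\to\widetilde C$. The strategy is to build $h$ component by component and then glue. Fix the component bijection $\tau$ coming from the tree isomorphism. For each irreducible component $C_i$, the normalization $\widetilde{C_i}\to C_i$ presents $C_i$ as a Riemann surface with finitely many punctures (one per branch at each singular point it passes through, one per end at infinity), and the tree data tells us exactly: the Euler characteristic of this Riemann surface, hence its topological type; the partition of punctures into those lying over each singular point $p_k\in C_i$ together with a local branch, those lying over $\infty$, and for each such puncture the tangent direction it is attached to plus the relative multiplicity along that direction. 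Since the tree isomorphism preserves all of this, $C_i$ and $\widetilde C_{\tau(i)}$ have the same discrete invariants. I would then construct the homeomorphism locally: near each singular point and near infinity, use Puiseux parametrizations of the relevant branches exactly as in Claim \ref{claim:propequivblowdeg} (in its local version, and the proof of Theorem \ref{thmequivinfinity}) to build strong blow-spherical homeomorphisms matched branch-to-branch with the prescribed relative multiplicities; on the complement of these finitely many neighbourhoods one has a compact surface-with-boundary problem with matching boundary parametrizations, which can be solved by an ambient diffeomorphism (the classification of compact surfaces / isotopy extension). The delicate point is the \textbf{compatibility of the gluings}: the local maps at different singular points and at infinity, together with the map on the middle part, must agree on overlaps, and across different components $C_i$ and $C_{i'}$ sharing a singular point $p_k$ the resulting germ maps must combine into a single well-defined homeomorphism of the germ $(C,p_k)$ whose blow-up extends continuously to $\partial C'_{p_k}$. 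This is handled by first fixing, at each singular point $p_k$ and at $\infty$, linear coordinates sending the relevant tangent lines to standard coordinate lines (as in the proof of Theorem \ref{thmequivinfinity}), and arranging that every local branch map restricts to the \emph{identity} on the boundary of the spherical blow-up; then the gluing is automatic because all pieces agree with the identity where they meet the exceptional boundary.

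I expect the main obstacle to be precisely this bookkeeping at the singular points: unlike the at-infinity case of Theorem \ref{thmequivinfinity}, where the ``middle'' region is a complement of a ball and the ends do not interact, here a single singular point can be shared by several irreducible components, and one must ensure the branch-wise local homeomorphisms assemble into one homeomorphism of the germ $(C,p_k)$ that is simultaneously compatible with the global maps on each $C_i$. Concretely, I would organize the argument as: (i) reduce to standard tangent-cone coordinates at each of $p_1,\dots,p_s$ and at $\infty$; (ii) for each branch of each component at each such point, produce via Puiseux a strong blow-spherical homeomorphism onto the corresponding branch of $\widetilde C$ that is the identity on the exceptional boundary — this is the local version of Claim \ref{claim:propequivblowdeg}; (iii) on each component $C_i$, extend these boundary-identity local maps over the compact middle piece using surface topology (matching Euler characteristics guarantees a diffeomorphism extending the prescribed boundary behaviour); (iv) take the union over all $i$, checking that the definitions agree on the shared singular-point germs because all pieces equal the identity near the exceptional divisor, so the result is a well-defined strong blow-spherical homeomorphism $h\colon C\to\widetilde C$. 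A secondary technical nuisance is making precise the ``middle piece'' extension — one wants Proposition \ref{propconicalstru} and a relative version (punctured-disc neighbourhoods removed around each singular point) to see the middle as a compact surface with boundary, and then invoke that two such surfaces with equal genus, number of boundary circles, and specified boundary identifications are diffeomorphic rel boundary.
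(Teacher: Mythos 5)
Your proposal is correct and follows essentially the same route as the paper: decompose each component into germs at the singular points (handled by the local blow-spherical classification, which the paper imports as Corollary 6.2 of \cite{Sampaio:2020} rather than re-deriving via Puiseux), an end at infinity (Theorem \ref{thmequivinfinity}), and a compact middle surface with boundary filled in by the classification of compact surfaces plus isotopy extension of the prescribed boundary maps. The only cosmetic difference is that the paper takes the local maps $g_j$ on the whole germ $C\cap\overline{B_\epsilon(p_j)}$ at once, which settles the cross-component compatibility you address by normalizing each branch map to be the identity on the exceptional boundary.
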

\begin{proof}
$ (1)\Rightarrow (2)$. Assume that we have a blow-spherical homeomorphism $\varphi\colon C\rightarrow \widetilde{C}$. Thus we have that there is a bijection $\sigma\colon I\rightarrow J$ such that each irreducible component $C_i$ of $C$ is sent by $\varphi$ onto an irreducible component $\widetilde{C}_{\sigma(i)}$ of $\widetilde{C}$. Consequently  $\chi(C_i)=\chi(\widetilde{C}_{\sigma(i)})$, where $\chi(C)$ is the Euler characteristic of $C$. Since $\varphi({\rm Sing}_1(C))={\rm Sing}_1(\widetilde C)$, by Corollary 6.2 in \cite{Sampaio:2020} and by Theorem \ref{thmequivinfinity}, we can obtain an isomorphism between the complex blow-spherical trees of $C$ and $\widetilde C$.

$ (2)\Rightarrow (3)$. Assume that there is an isomorphism between the complex blow-spherical trees of $C$ and $\widetilde C$. Thus, we may assume that $C$ and $\widetilde C$ has the same complex blow-spherical tree.

We fix $r\in \{1,...,s\}$. We denote $X=C_r$ and $\widetilde X=\widetilde C_r$. Let $p_1,\cdots,p_s$ (resp. $\widetilde{p}_1,\cdots, \widetilde{p}_s$) be the singular points of $C$ (resp. $\widetilde{C}$) which are in $X$ (resp. $Y$). So, by hypothesis we have $\chi(X)=\chi(\widetilde{X})$, $k(X,\infty)=k(\widetilde{X},\infty)$ and $k(X,p_j)=k(\widetilde{X},\widetilde{p}_j)$, $j=1,\cdots, s$.

It follows from Corollary 6.2 in \cite{Sampaio:2020} that for a small enough $\epsilon>0$, there exist strong blow-spherical homeomorphisms
	$$g_j\colon C\cap \overline{B_{\epsilon}(p_j)}\rightarrow \widetilde{C}\cap \overline{B_{\epsilon}(\widetilde{p}_j)},$$
for any $j\in \{1,\cdots, s\}$.

By Theorem \ref{thmequivinfinity}, for a sufficiently large $R>0$, there exist a strong blow-spherical homeomorphism 
	$$h\colon X\setminus B_R(0)\rightarrow \widetilde{X}\setminus B_R(0).$$

We denote by $E_1,\cdots, E_{e}$ and $F_1,\cdots,F_e$ the ends of $X$ and $\widetilde{X}$, respectively, in such a way that $h(E_i\setminus B_R(0))= F_i\setminus B_R(0)$ for any $i\in \{1,\cdots, e\}$.

Now, we define the following surfaces with boundary
$$M= \Bigl(C\cap \overline{B_R(0)} \Bigr)\setminus \Bigl\{B_{\epsilon}(p_1)\cup \cdots \cup B_{\epsilon}(p_s)\Bigr\}$$
and 
$$N=\left(\widetilde{C}\cap \overline{B_R(0)}\right)\setminus \Bigl\{B_{\epsilon}(\widetilde{p}_1)\cup \cdots \cup B_{\epsilon}(\widetilde{p}_s)\Bigr\}.$$
Note that $f\colon \partial M\rightarrow \partial N$ is given by 
\[   
f(z) = 
\begin{cases}
	g_j(z), &\quad\text{if} \ z\in C; \|z-p_j\|=\epsilon\\
	h(z), &\quad\text{if} \ z\in X; \|z\|=R\\ 
\end{cases}
\]
 is a diffeomorphism.
 
The boundaries $\partial M$ and $\partial N$ are smooth compact manifolds of dimension 1 and, consequently, their connected components $S_i$ and $\widetilde{S}_i$ are diffeomorphic to $\mathbb{S}^1$, i.e., $\partial M=\bigcup_{i=1}^{k}{S_i}$ and $\partial N=\bigcup_{i=1}^{k}{\widetilde{S}_i}$.
 
 Since $M$ and $N$ are orientable and have the same genus and the same number of boundaries, we have a diffeomorphism $\nu\colon M\rightarrow N$ given by the theorem of classification for compact surfaces (see \cite{Hirsch}). The restriction $\nu\vert_{S_i}\colon S_i \rightarrow \widetilde{S}_i$ is an  orientation-preserving diffeomorphism. Now $\nu^{-1}\circ h_{i} \colon S_{i}\rightarrow S_i$ and by Theorem 3.3 in \cite{Hirsch} there is an isotopy $H_i\colon S_i\times [R',R]\rightarrow S_i\times [R',R]$ such that $H_i(\cdot,R')=\mbox{id}$ and $H_i(\cdot,R)=\nu^{-1}\circ h_i$. Let $s\colon [R',R]\rightarrow [R',R]$  a smooth function such that 
\[   
s(t) = 
\begin{cases}
	R', &\quad\text{if} \ t\in [R',\frac{R+2R'}{3}]\\
	g(t), &\quad\text{if} \ t\in [\frac{R+2R'}{3},\frac{2R-R'}{3}]\\ 
	R, &\quad\text{if} \ t\in[\frac{2R-R'}{3},R]\\ 
\end{cases},
\]
where $g$ is a smooth bump function such that 
\[   
g(t) = 
\begin{cases}
	R', &\quad\text{if} \ t\leq \frac{R+2R'}{3}\\
	R, &\quad\text{if} \ t\geq \frac{2R-R'}{3}\\ 
\end{cases}.
\]

Now we define $\Phi\colon C\cap \overline{ann(0;R',R)}\rightarrow \partial M\times [R',R]$ by $\Phi(x)=(h(x),|x|)$ and $\Psi\colon \partial N\times [R',R]\rightarrow \nu\left(C\cap \overline{ann(0;R',R)}\right)$ such that the diagram below commutes 

\begin{center}
\begin{tikzcd}
	C\cap \overline{ann(0;R',R)} \arrow[r]{d}{\Phi} \arrow[dr]{d}{\nu}
	& \partial M\times [R',R] \arrow[d]{d}{\Psi}\\
	& \nu\left(C\cap \overline{ann(0;R',R)}\right)
\end{tikzcd}
\end{center}
where $ann(0;R',R)=\overline{B_R(0)}\setminus B_{R'}(0)$.
So, we have 
\begin{eqnarray*}
	\Psi\left(H(h(x),R'),|x|\right) & = & \Psi\left(h(x),|x|\right)\\
	& = & \nu\circ \Phi^{-1}\left(\Phi(h(x))\right)\\
	& = &  \nu(h(x)),
\end{eqnarray*}
and 
\begin{eqnarray*}
	\Psi\left(H(h(x),R),|x|\right) & = & \Psi\left(\nu^{-1}\circ h(x),|x|\right)\\
	& = & \nu\circ \Phi^{-1}\left(\nu^{-1}\circ h(x),|x|\right)\\
	& = &  \nu\left(\nu^{-1}\circ h(x)\right)\\
	& = & h(x),
\end{eqnarray*}
By Theorem 3.3 in \cite{Hirsch}, there is a diffeomorphism $F\colon M\rightarrow N$ that extends $f\colon \partial M\rightarrow \partial N$. Therefore, we define the strong blow-spherical homeomorphism $\phi_r\colon X\rightarrow \widetilde{X}$ by:  

\[   
\phi_r(z) = 
\begin{cases}
	g_j(z), &\quad\text{if} \ z\in X; \|z-p_j\|\leq \epsilon\\
	h(z), &\quad\text{if} \ z\in X; \|z\|\geq R\\ 
	F(z), &\quad\text{if} \ z\in M.\\ 
\end{cases}
\]

Finally, the mapping $\varphi\colon C \rightarrow  \widetilde{C}$, defined by $\varphi(z) = \phi_r(z)$ whenever $z\in C_r$, is a strong blow-spherical homeomorphism.

Since $ (3)\Rightarrow (1)$ by definition, this finishes the proof.
\end{proof}

As a consequence we obtain the following example:
\begin{example}\label{main_example}
Let $C_1=\{(x,y)\in \C^2; y-x^4=0\}$, $C_2=\{(x,y)\in \C^2; y-x^2-x^4=0\}$ and $C_3=\{(x,y)\in \C^2; y-x^3-x^4=0\}$. Then $C=C_1\cup C_2$ and $\widetilde C=C_1\cup C_3$ are blow-spherical homeomorphic (see Figure \ref{fig:complexbstree_three}). Since the coincidence between $C_1$ and $C_2$ at $0$ is 2 and the coincidence between $C_1$ and $C_3$ at $0$ is 3 (see the definition of coincidence between curves in \cite{Fernandes:2003}), we have that $C$ and $\widetilde C$ are not outer lipeomorphic (see \cite[Theorem 3.2]{Fernandes:2003}).
\end{example}

\begin{figure}[H]
\begin{tabular}{cc}
\begin{minipage}[c][8cm][c]{6cm}
	\centering
	\begin{tikzpicture}[line cap=round,line join=round,>=triangle 45,x=0.7cm,y=0.7cm]
		\clip(-5,-1) rectangle (5,6.2);
		\draw [line width=1pt] (0,0)-- (2,2);
		\draw [line width=1pt] (0,0)-- (-2,2);
		\draw [line width=1pt] (-2,2)-- (-3,4);
		\draw [line width=1pt] (-2,2)-- (-1,4);
		\draw [line width=1pt] (2,2)-- (3,4);
		\draw [line width=1pt] (2,2)-- (1,4);
		\draw [line width=1pt] (1,4)-- (1,6);
		\draw [line width=1pt] (3,4)-- (3,6);
		\draw [line width=1pt] (-1,4)-- (-1,6);
		\draw [line width=1pt] (-3,4)-- (-3,6);
		\draw [line width=1pt] (0,0) circle (2pt);
		\begin{scriptsize}
			\draw [fill=black] (0,0) circle (2pt);
			\draw [fill=black] (2,2) circle (2pt);
			\draw [fill=black] (-2,2) circle (2pt);
			\draw [fill=black] (-3,4) circle (2pt);
			\draw [fill=black] (-1,4) circle (2pt);
			\draw [fill=black] (3,4) circle (2pt);
			\draw [fill=black] (1,4) circle (2pt);
			\draw [fill=black] (-1,6) circle (2pt);
			\draw [fill=black] (-3,6) circle (2pt);
			\draw [fill=black] (1,6) circle (2pt);
			\draw [fill=black] (3,6) circle (2pt);
			\draw [fill=black] (0.4 ,0) node {\normalsize $C$};
					\draw [fill=black] (-1.4 ,1) node {\normalsize $1$};
					\draw [fill=black] (1.4 ,1) node {\normalsize $1$};
			\draw [fill=black] (-2.6 ,2) node {\normalsize $C_1$};
			\draw [fill=black] (2.6 ,2) node {\normalsize $C_2$};
			\draw [fill=black] (-3.4 ,4) node {\normalsize $1$};
			\draw [fill=black] (-1.4 ,4) node {\normalsize $\infty$};
			\draw [fill=black] (1.4 ,4) node {\normalsize $1$};
			\draw [fill=black] (3.4 ,4) node {\normalsize $\infty$};
			\draw [fill=black] (-1.4 ,5) node {\normalsize $4$};
			\draw [fill=black] (-3.4 ,5) node {\normalsize $1$};
			\draw [fill=black] (1.4 ,5) node {\normalsize $1$};
			\draw [fill=black] (3.4 ,5) node {\normalsize $4$};
		\end{scriptsize}
	\end{tikzpicture}
	\caption{Complex blow-spherical tree of the curve $C$.}
	\label{fig:complexbstree_three}
  \end{minipage} 
  & 
  \begin{minipage}[c][8cm][c]{6cm}
	\centering
	\begin{tikzpicture}[line cap=round,line join=round,>=triangle 45,x=0.7cm,y=0.7cm]
		\clip(-5,-1) rectangle (5,6.2);
		\draw [line width=1pt] (0,0)-- (2,2);
		\draw [line width=1pt] (0,0)-- (-2,2);
		\draw [line width=1pt] (-2,2)-- (-3,4);
		\draw [line width=1pt] (-2,2)-- (-1,4);
		\draw [line width=1pt] (2,2)-- (3,4);
		\draw [line width=1pt] (2,2)-- (1,4);
		\draw [line width=1pt] (1,4)-- (1,6);
		\draw [line width=1pt] (3,4)-- (3,6);
		\draw [line width=1pt] (-1,4)-- (-1,6);
		\draw [line width=1pt] (-3,4)-- (-3,6);
		\draw [line width=1pt] (0,0) circle (2pt);
		\begin{scriptsize}
			\draw [fill=black] (0,0) circle (2pt);
			\draw [fill=black] (2,2) circle (2pt);
			\draw [fill=black] (-2,2) circle (2pt);
			\draw [fill=black] (-3,4) circle (2pt);
			\draw [fill=black] (-1,4) circle (2pt);
			\draw [fill=black] (3,4) circle (2pt);
			\draw [fill=black] (1,4) circle (2pt);
			\draw [fill=black] (-1,6) circle (2pt);
			\draw [fill=black] (-3,6) circle (2pt);
			\draw [fill=black] (1,6) circle (2pt);
			\draw [fill=black] (3,6) circle (2pt);
			\draw [fill=black] (0.4 ,0) node {\normalsize $\widetilde{C}$};
			\draw [fill=black] (-1.4 ,1) node {\normalsize $1$};
			\draw [fill=black] (1.4 ,1) node {\normalsize $1$};
			\draw [fill=black] (-2.6 ,2) node {\normalsize $\widetilde{C}_1$};
			\draw [fill=black] (2.6 ,2) node {\normalsize $\widetilde{C}_2$};
			\draw [fill=black] (-3.4 ,4) node {\normalsize $1$};
			\draw [fill=black] (-1.4 ,4) node {\normalsize $\infty$};
			\draw [fill=black] (1.4 ,4) node {\normalsize $1$};
			\draw [fill=black] (3.4 ,4) node {\normalsize $\infty$};
			\draw [fill=black] (-1.4 ,5) node {\normalsize $4$};
			\draw [fill=black] (-3.4 ,5) node {\normalsize $1$};
			\draw [fill=black] (1.4 ,5) node {\normalsize $1$};
			\draw [fill=black] (3.4 ,5) node {\normalsize $4$};
		\end{scriptsize}
	\end{tikzpicture}
	\caption{Complex blow-spherical tree of the curve $\widetilde{C}$.}
	\label{fig:complexbstree_four}

\end{minipage}
\end{tabular}
\end{figure}

\begin{remark}\label{rem:plane_spacial_are_not-same}
It was presented in \cite[Example 4.10]{BirbrairFJ:2021} an example of an algebraic curve $\Gamma\subset \C^n$ with nine cusps and such that its projective closure is a rational curve with nine cusps. Thus, there is no algebraic curve $\Lambda \subset \C^2$ which is blow-spherical homeomorphic to $\Gamma$. Indeed, assume that there is a blow-spherical homeomorphism $\varphi \colon \Gamma\to \Lambda$ and $\Lambda\subset \C^2$ is an algebraic curve.
By \cite[Theorem 4.3]{Sampaio:2020} the curve $\Lambda$ has also nine cusps, hence by Proposition \ref{homeoprojective} the projective closure of $\Lambda$ in $\C \mathbb{P}^2$ is a rational cuspidal curve with at least nine cusps, which is a contradiction by \cite[Corollary 1.2]{Tono:2005}. The above $\Gamma\subset \C^n$ was obtained as some affine part of a projective curve $\overline{\Gamma}\subset \mathbb{CP}^n$. Let us explain a way to obtain such a curve $\overline{\Gamma}$. Let $k$ be a positive integer number. We define $\varphi_k\colon \C\setminus \{1,...,k\}\to (\C^2)^k$ by 
$$
\varphi(t)=\textstyle{\left(t-1,\frac{1}{(t-1)^2},t-2,\frac{1}{(t-2)^2},...,t-k,\frac{1}{(t-k)^2}\right)}.
$$
Let $\overline{\Gamma}_k$ be the projective closure of the image of $\varphi$. We have that $\overline{\Gamma}_k$ is a rational algebraic curve with $k$ cusps.
\end{remark}

\section{An application to Lipschitz geometry}\label{sec:app_lne}
In this section, we show that blow-spherical homeomorphisms detect the complex algebraic curves which are LNE.

\begin{definition}[See \cite{BirbrairM:2000}]\label{def:lne}
Let $X\subset\R^n$ be a subset. We say that $X$ is {\bf Lipschitz normally embedded (LNE)} if there exists a constant $c\geq 1$ such that $d_{X,inn}(x_1,x_2)\leq C\|x_1-x_2\|$, for all pair of points $x_1,x_2\in X$.  We say that $X$ is {\bf Lipschitz normally embedded set at $p$} (shortly LNE at $p$), if there is a neighbourhood $U$ such that $p\in U$ and $X\cap U$ is an LNE set or, equivalently, that the germ $(X,p)$ is LNE. In this case, we say also that $X$ is $C$-LNE (resp. $C$-LNE at $p$). We say that $X$ is {\bf Lipschitz normally embedded set at infinity} (shortly LNE at infinity), if there is compact subset $K$ such that $X\setminus K$ is an LNE set. In this case, we say also that $X$ is $C$-LNE at infinity. 
\end{definition}

\begin{definition}\label{lipschitz function-infinity}
Let $X\subset\R^n$ and $Y\subset\R^m$. We say that $X$ and $Y$ are {\bf outer lipeomorphic at infinity} if there are compact subsets $K\subset \R^n$ and $\tilde K\subset \R^m$ and an outer lipeomorphism $f\colon X\setminus K\rightarrow Y\setminus \tilde K$.
\end{definition}

Let us remind the following result of Dias and Ribeiro in \cite{DiasR:2022}:
\begin{theorem}[Theorem 1.4 in \cite{DiasR:2022}]
 Let $X \subset \C^n$ be a pure dimensional complex algebraic subset. If $X$ is Lipschitz normally embedded at infinity, then $\deg(X)=\deg(C(X,\infty))$.
\end{theorem}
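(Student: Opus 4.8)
The plan is to combine Remark~\ref{remarkdegree} with the elementary bound $k_{X,\infty}(X_j)\ge 1$ to reduce the statement to showing that Lipschitz normal embedding at infinity forces every relative multiplicity of $X$ at infinity to equal $1$, and then to obstruct $k_{X,\infty}(X_{j_0})\ge 2$ by producing a sequence of pairs of points of $X$ along which the inner distance dominates the outer distance unboundedly.

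\emph{Reduction.} Let $X_1,\dots,X_r$ be the irreducible components of $C(X,\infty)$. They all have dimension $\dim X$, and since $C(X,\infty)=X_1\cup\cdots\cup X_r$, the very definition of the degree (a count of a generic linear fibre, together with the fact that $X_i\cap X_j$ is of smaller dimension) gives $\deg(C(X,\infty))=\sum_{j=1}^r\deg(X_j)$. By Remark~\ref{remarkdegree}, $\deg(X)=\sum_{j=1}^r k_{X,\infty}(X_j)\,\deg(X_j)$, so $\deg(X)\ge\deg(C(X,\infty))$ always, with equality if and only if $k_{X,\infty}(X_j)=1$ for all $j$. Hence it suffices to prove: if $k:=k_{X,\infty}(X_{j_0})\ge 2$ for some $j_0$, then $X$ is not LNE at infinity.

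\emph{The two sheets.} Choose a simple point $w\in\mathrm{Smp}(\partial X'_\infty)$ with $\|w\|=1$ lying over $X_{j_0}$ (possible by Remark~\ref{remarksimplepointdense}); then the germ $\big(\rho_\infty^{-1}(X\setminus\{0\}),(w,0)\big)=\big(X'_\infty\setminus\partial X'_\infty,(w,0)\big)$ has exactly $k$ connected components. Transporting this across the homeomorphism $\rho_\infty$ on $\mathbb S^n\times(0,\infty)$, through the neighbourhood basis $V_\epsilon=\{(u,s):\|u-w\|<\epsilon,\ 0\le s<\epsilon\}$ of $(w,0)$, one gets: for all small $\epsilon>0$ the set $X\cap N_\epsilon$ has exactly $k$ connected components $Y_1,\dots,Y_k$, where $N_\epsilon=\{x\in\C^n:\ \|x\|>1/\epsilon \text{ and } \|x/\|x\|-w\|<\epsilon\}$. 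Since $(w,0)$ lies in the boundary of each of the $k$ manifolds-with-boundary that constitute $X'_\infty$ near $(w,0)$, each $Y_a$ accumulates on the point at infinity in the direction $w$; slicing these manifolds-with-boundary by the level sets of the blow-up parameter $s$ (equivalently, of $\|x\|$), I extract, for a suitable sequence $r_m\to\infty$, points $a^{(m)}\in Y_1$ and $b^{(m)}\in Y_2$ with $\|a^{(m)}\|=\|b^{(m)}\|=r_m$ and directions $w_m:=a^{(m)}/r_m\to w$ and $w_m':=b^{(m)}/r_m\to w$.

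\emph{The estimates.} On the outer side, $\|a^{(m)}-b^{(m)}\|=r_m\|w_m-w_m'\|=o(r_m)$. On the inner side I claim $d_{X,\mathrm{inn}}(a^{(m)},b^{(m)})\ge\frac\epsilon3 r_m$ for $m$ large. Indeed, any path $\gamma$ in $X$ from $a^{(m)}$ to $b^{(m)}$ must leave $N_\epsilon$, as $Y_1\ne Y_2$ are distinct connected components of $X\cap N_\epsilon$; if $\mathrm{length}(\gamma)<\frac\epsilon3 r_m$ then, since $t\mapsto\|\gamma(t)\|$ is $1$-Lipschitz in arclength, $\gamma$ stays in $\{\|x\|>(1-\frac\epsilon3)r_m\}\subset\{\|x\|>1/\epsilon\}$, so it leaves $N_\epsilon$ through a point $x^\ast$ with $\rho^\ast:=\|x^\ast\|>(1-\frac\epsilon3)r_m$ and $\|v^\ast-w\|\ge\epsilon$ where $v^\ast=x^\ast/\rho^\ast$; for $m$ large $\|v^\ast-w_m\|\ge\epsilon/2$, hence $\mathrm{Re}\langle v^\ast,w_m\rangle\le 1-\epsilon^2/8$ and
\[
\|a^{(m)}-x^\ast\|^2=r_m^2+(\rho^\ast)^2-2r_m\rho^\ast\,\mathrm{Re}\langle w_m,v^\ast\rangle\ \ge\ (r_m-\rho^\ast)^2+\tfrac{\epsilon^2}{4}r_m\rho^\ast\ \ge\ \tfrac{\epsilon^2}{4}\big(1-\tfrac\epsilon3\big)r_m^2 ,
\]
so $\mathrm{length}(\gamma)\ge\|a^{(m)}-x^\ast\|>\frac\epsilon3 r_m$ for $\epsilon$ small, a contradiction. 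Therefore $d_{X,\mathrm{inn}}(a^{(m)},b^{(m)})/\|a^{(m)}-b^{(m)}\|\ge\epsilon/(3\|w_m-w_m'\|)\to\infty$, contradicting LNE at infinity. Hence $k_{X,\infty}(X_j)=1$ for all $j$ and $\deg(X)=\deg(C(X,\infty))$.

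The degree bookkeeping and the inequality above are routine; the step that needs real care — the main obstacle — is turning the purely combinatorial statement ``the germ has $k$ components'' into genuine, uniformly separated sheets of $X$ near infinity, and in particular producing the pairs $a^{(m)},b^{(m)}$ at a common radius with directions tending to $w$; this rests on the semialgebraic local conical structure of $X'_\infty$ along $\mathrm{Smp}(\partial X'_\infty)$ (cf.\ Remarks~\ref{remarksimplepointcone} and~\ref{remarksimplepointdense}). A shorter alternative route would be available if one had at hand the statement that LNE at infinity forces $X\setminus K$ to be blow-spherically homeomorphic to $C(X,\infty)\setminus\tilde K$: then Proposition~\ref{propinvarianciamultiplicidaderelativa}, together with the fact that $\rho_\infty^{-1}(C\setminus\{0\})=(C\cap\mathbb S^n)\times(0,\infty)$ for a cone $C$ (so that all relative multiplicities of a cone at infinity equal $1$), would finish the proof immediately.
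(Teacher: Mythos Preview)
The paper does not give its own proof of this statement: it is quoted verbatim as Theorem~1.4 of Dias--Ribeiro \cite{DiasR:2022} and used as a black box in Section~\ref{sec:app_lne}. There is therefore no in-paper proof to compare your attempt against.

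Your argument is nonetheless correct and follows the natural route. The reduction via Remark~\ref{remarkdegree} and the additivity $\deg(C(X,\infty))=\sum_j\deg(X_j)$ is clean, and the contradiction step (two sheets over a simple direction $w$, outer distance $o(r_m)$, inner distance $\gtrsim \epsilon r_m$ because any connecting path must exit the conical neighbourhood $N_\epsilon$) is exactly the right mechanism. Two minor remarks. First, the extraction of $a^{(m)},b^{(m)}$ at a common scale with directions tending to $w$---which you rightly flag as the delicate point---is most easily done through Corollary~\ref{corollary 2.16FernandesS:2020}: each connected component $Y_a$ of $X\cap N_\epsilon$ is itself semialgebraic with $w\in C(Y_a,\infty)$, so there exist semialgebraic arcs $\gamma_a(t)=tw+o_\infty(t)$ in $Y_a$, and setting $a^{(m)}=\gamma_1(t_m)$, $b^{(m)}=\gamma_2(t_m)$ gives $\|a^{(m)}-b^{(m)}\|=o(t_m)$ directly, without having to slice manifolds-with-boundary at an exact radius. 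Second, since ``LNE at infinity'' concerns $d_{X\setminus K,\mathrm{inn}}$ rather than $d_{X,\mathrm{inn}}$, you should note explicitly that $d_{X\setminus K,\mathrm{inn}}\ge d_{X,\mathrm{inn}}$, so your lower bound transfers; this is trivial but worth saying.
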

As a consequence, they also obtained that if a pure dimensional complex algebraic subset is Lipschitz normally embedded at infinity, then its tangent cone at infinity is reduced (see \cite[Proposition 3.3]{DiasR:2022}). The local version of this consequence was proved in \cite{DenkowskiT:2019} and it was proved in a more general setting in \cite{FernandesS:2019}.

Another consequence is the following:
\begin{corollary}
 Let $X \subset \C^n$ be a complex algebraic curve of degree $d$. If $X$ is Lipschitz normally embedded at infinity, then $C(X, \infty)$ is the union of $d$  different complex lines passing through the origin. 
\end{corollary}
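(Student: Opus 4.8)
The plan is to combine the theorem of Dias and Ribeiro quoted just above with the structural description of the tangent cone at infinity and the degree formula in Remark \ref{remarkdegree}. Let $X\subset\C^n$ be a complex algebraic curve of degree $d$ that is Lipschitz normally embedded at infinity. First I would invoke Theorem 1.4 in \cite{DiasR:2022} to conclude $\deg(X)=\deg(C(X,\infty))$. Let $X_1,\dots,X_r$ be the irreducible components of $C(X,\infty)$; since $X$ is a curve, by the Remark after the definition of the tangent cone (using \cite[p.~84, Proposition~2]{Chirka:1989}, \cite[Theorem~1.1]{LeP:2018}, \cite[Theorem~3.1]{Sampaio:2023}) each $X_i$ is a complex line through the origin, so $\deg(X_i)=1$ and $\deg(C(X,\infty))=r$ (the degree of a union of $r$ distinct lines through the origin is $r$, as the lines are the irreducible components and there is no multiplicity in the \emph{reduced} cone). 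Hence $r=d$, i.e.\ $C(X,\infty)$ is a union of exactly $d$ distinct complex lines.

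The remaining point is to make sure the $d$ lines are genuinely distinct, which is automatic once we know they are the distinct irreducible components $X_1,\dots,X_r$ with $r=d$; so the argument above already delivers this. For bookkeeping it is worth remarking, as the authors did after stating the Dias--Ribeiro theorem, that Lipschitz normal embedding at infinity forces the tangent cone at infinity to be reduced, i.e.\ $k_{X,\infty}(X_j)=1$ for every $j$. Indeed, by Remark \ref{remarkdegree},
\[
d=\deg(X)=\sum_{j=1}^r k_{X,\infty}(X_j)\deg(X_j)=\sum_{j=1}^r k_{X,\infty}(X_j),
\]
and combined with $\deg(X)=\deg(C(X,\infty))=r$ this gives $\sum_{j=1}^r (k_{X,\infty}(X_j)-1)=0$; since each $k_{X,\infty}(X_j)\geq 1$, we get $k_{X,\infty}(X_j)=1$ for all $j$ and $r=d$. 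This is really the same computation as the one underlying \cite[Proposition 3.3]{DiasR:2022}, restricted to curves.

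I do not expect a serious obstacle here: the statement is a formal corollary of the quoted theorem plus the elementary fact that the tangent cone at infinity of a complex algebraic curve is a finite union of complex lines and that the degree of a reduced union of $r$ distinct lines through the origin equals $r$. The only thing to be careful about is the distinction between the (possibly non-reduced) tangent cone carrying multiplicities $k_{X,\infty}(X_j)$ and the reduced union of lines $X_1,\dots,X_r$; phrasing the conclusion in terms of "the union of $d$ different complex lines" refers to the reduced set, and the count $r=d$ is exactly what the two expressions for $\deg(X)$ yield once the multiplicities are shown to be $1$.
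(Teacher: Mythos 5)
Your argument is correct and is exactly the one the paper intends: the authors state the corollary without proof as an immediate consequence of the Dias--Ribeiro theorem, and the implicit reasoning is precisely your chain $\deg(X)=\deg(C(X,\infty))$, each irreducible component of $C(X,\infty)$ is a line of degree one, hence $\deg(C(X,\infty))=r$ and $r=d$. Your additional observation that the relative multiplicities $k_{X,\infty}(X_j)$ must all equal $1$ matches the reducedness statement the authors attribute to \cite[Proposition 3.3]{DiasR:2022} and is a correct, if strictly optional, supplement.
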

The reciprocal of the above corollary also holds. In order to see that, let us remind the following result, which is part of the classification of curves under outer lipeomorphisms at infinity in \cite{Targino:2022}, where we are denoting by $\overline{X}$ the projective closure of $X$:

 \begin{theorem}\label{atinfinity}
Let $C$ and $C'$ be two complex algebraic plane curves.
The following statements are equivalent:
 \begin{enumerate}
      \item\label{it1} $C$ and $C'$ are outer lipeomorphic at infinity;
  \item\label{it2} there is a bijection $\psi$ between the set of points at infinity of $C$ and the set of points at infinity of $C'$ such that $(\overline{C}\cup L_\infty,p)$ has the same embedded topological type as $(\overline{C'}\cup L_\infty,\psi(p))$.
  \end{enumerate}
\end{theorem}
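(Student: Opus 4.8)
The plan is to reduce both implications to the bi-Lipschitz classification of germs of complex plane curves due to Fernandes \cite{Fernandes:2003}, using the fact that a point at infinity $p$ of an affine plane curve $C\subset\C^2$, read in an affine chart of $\mathbb{CP}^2$ centred at $p$, is precisely such a germ: the hyperplane at infinity $L_\infty$ appears there as a distinguished smooth branch, and the tangency of the ends of $C$ at infinity to $L_\infty$ is recorded by the relative multiplicities $k_{E,\infty}$. I will repeatedly invoke the classical dictionary (Puiseux expansions, Eggers--Wall trees) between the embedded topological type of the pair $(\overline C\cup L_\infty,p)$ and the discrete data consisting of, for the ends of $C$ tangent to $p$: their Puiseux characteristic exponents at infinity, their pairwise contact exponents at infinity, and their intersection multiplicities with $L_\infty$. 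The coordinate change relating the chart at $p$ to the affine coordinates of $\C^2$ is a birational map which is biholomorphic off $L_\infty$, so it transports these discrete invariants faithfully even though it is not bi-Lipschitz.

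\textbf{(1)$\Rightarrow$(2).} Let $f\colon C\setminus K\to C'\setminus\tilde K$ be an outer lipeomorphism at infinity. By Proposition \ref{Lip_implies_bs} it is a blow-spherical homeomorphism at infinity, so Proposition \ref{homeoprojective} extends it to a homeomorphism $\bar f\colon\overline C\to\overline{C'}$ of projective closures, whose restriction $\psi\colon\overline C\cap L_\infty\to\overline{C'}\cap L_\infty$ is the required bijection. To see that the embedded topological type is preserved at each $p$, I would check that an outer lipeomorphism at infinity preserves exactly the discrete invariants above. The intersection multiplicities with $L_\infty$ are the $k_{E,\infty}$, which are blow-spherical invariants by Proposition \ref{propinvarianciamultiplicidaderelativa}. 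The contact exponents are recovered from the separation rates $\|\gamma_1(t)-\gamma_2(t)\|\asymp\|\gamma_1(t)\|^{\beta}$ of pairs of ends sharing a direction $v$, parametrized as $\gamma_i(t)=tv+o_\infty(t)$ (and, for a single end, from the rate at which it pulls away from its tangent line); since $\|f(x)\|\asymp\|x\|$ for $\|x\|$ large and $\|f(\gamma_1(t))-f(\gamma_2(t))\|\asymp\|\gamma_1(t)-\gamma_2(t)\|$, all these exponents are preserved by $f$. By the dictionary, $(\overline C\cup L_\infty,p)$ and $(\overline{C'}\cup L_\infty,\psi(p))$ have the same embedded topological type.

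\textbf{(2)$\Rightarrow$(1).} Conversely, given $\psi$ as in (2), equality of embedded topological types provides, for each $p\in\overline C\cap L_\infty$ with direction $v$ and for $\psi(p)$ with direction $v'$, a matching of the Puiseux characteristic exponents, the pairwise contact exponents, and the $L_\infty$-intersection multiplicities of the ends of $C$ tangent to $v$ with those of the ends of $C'$ tangent to $v'$. Transplanting to infinity Fernandes' construction \cite{Fernandes:2003}, I would build from this matching an outer bi-Lipschitz homeomorphism between $\bigcup_{C(E,\infty)=\C\cdot v}E$ and $\bigcup_{C(F,\infty)=\C\cdot v'}F$; this refines the construction in the proof of Theorem \ref{thmequivinfinity} by matching the full Puiseux parametrizations $t\mapsto(\phi(t),t^{k})$ (not only the leading exponent $k$), so that the contact tree between ends sharing the direction $v$ is respected, which is what forces the map to be bi-Lipschitz and not merely blow-spherical. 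Since for large $R$ the clusters associated with distinct directions lie at mutual distance $\asymp R$ on $\mathbb S^3_R$, the finitely many cluster maps assemble, with a common Lipschitz constant, into an outer lipeomorphism $C\setminus B_R\to C'\setminus B_{R'}$.

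\textbf{The main obstacle.} The heart of the argument is the equivalence, at each point at infinity, between the outer-Lipschitz-at-infinity type of the ends of $C$ tangent to $v$ and the embedded topological type of $(\overline C\cup L_\infty,p)$. Its delicacy is that the natural identification of an end near infinity with a curve germ (via a chart at $p$, or via the inversion $x\mapsto x/\|x\|^2$) is blow-spherical but not bi-Lipschitz, so the local theorem of Fernandes cannot be invoked by a bare conjugation; instead one must verify, on the one hand, that the contact exponents at infinity together with the relative multiplicities form a complete set of bi-Lipschitz-at-infinity invariants (a Fernandes-type statement adapted to infinity), and, on the other, that these same numbers are exactly the discrete data recording the embedded topology of the germ at $p$. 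Once this two-sided dictionary is in place, both implications reduce to bookkeeping with Puiseux expansions, as in the proof of Theorem \ref{thmequivinfinity}, together with a uniform control of the gluing constants over the finitely many points at infinity.
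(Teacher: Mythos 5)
The first thing to note is that the paper does not prove Theorem \ref{atinfinity} at all: it is quoted from Targino \cite{Targino:2022} as an external input, so there is no internal proof to compare your argument against. Your outline is, in spirit, the strategy of that reference --- reduce both implications to a Fernandes/Neumann--Pichon-type dictionary between the outer Lipschitz data of the ends sharing a direction at infinity and the embedded topology of the corresponding germ of $\overline{C}\cup L_\infty$ --- so as a roadmap it is sound, and your identification of the intersection multiplicity of a branch with $L_\infty$ with the relative multiplicity $k_{E,\infty}$ is correct.

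As a proof, however, the two steps you yourself flag as the heart of the matter are asserted rather than established, and they are essentially the whole content of \cite{Targino:2022}. First, the ``two-sided dictionary'' is never carried out: one must actually compute how the birational chart $(x,y)\mapsto(x/y,1/y)$ transforms Puiseux expansions at infinity into Puiseux expansions at $p$, and verify that the contact exponents at infinity together with the $k_{E,\infty}$ translate precisely into the characteristic exponents and pairwise intersection multiplicities (including with the branch $L_\infty$) that classify the embedded topological type; without this, statement (2) is not connected to the invariants you control, and the fact that the chart is not bi-Lipschitz means no soft conjugation argument can substitute for the computation. Second, in (1)$\Rightarrow$(2) your recovery of the invariants of a single end is too coarse: ``the rate at which it pulls away from its tangent line'' detects only the first characteristic exponent; to recover all Puiseux pairs of one irreducible end you need the contact orders between its $k$ conjugate sheets over a generic direction, i.e.\ the argument of \cite{Fernandes:2003} transported to infinity. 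Likewise, in (2)$\Rightarrow$(1) the construction of a bi-Lipschitz map from matched full Puiseux parametrizations, with uniform constants as $t\to\infty$ and across the sheets, is the main technical labor and is only named, not done. Nothing in the plan is wrong, but these steps are the theorem, so the proposal should be read as a correct outline rather than a proof.
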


Since any complex algebraic curve $X \subset \C^n$ is outer lipeomorphic at infinity to a plane complex algebraic curve $A \subset \C^2$ (see \cite[Theorem 2.1]{FernandesJ:2023}), it follows from Theorem \ref{atinfinity} that a (connected) complex algebraic curve $X$ of degree $d$ is LNE infinity if and only if $C(X, \infty)$ is the union of $d$  different complex lines passing through the origin. Since a complex algebraic curve $X$ is locally the union of smooth curves which are pairwise transverse at $p$ if and only if $C(X, p)$ is the union of $m(X,p)$ different complex lines passing through the origin, the following local result was already known: {\it a complex algebraic curve $X$ is LNE at $p$ if and only if $C(X, p)$ is the union of $m(X,p)$ different complex lines passing through the origin} (e.g., see \cite{DenkowskiT:2019}). This also is implied by the classification of germs of complex analytic curves under outer lipemorphisms which was finished by Neumann and Pichon in \cite{N-P}, with previous contributions of Pham and Teissier in \cite{P-T} and Fernandes in \cite{Fernandes:2003} (see also \cite{FernandesSS:2018}). Thus, as an easy consequence of the above results, we have the following characterization of LNE curves:
\begin{proposition}
Let $X \subset \C^n$ be a connected complex algebraic curve of degree $d$. Then we the following statements are equivalent:
\begin{itemize}
 \item [(1)] $X$ is LNE;
 \item [(2)] $X$ is LNE at $p$ for all $p\in {\rm Sing}(X)\cup \{$infinity$\}$;
 \item [(3)] For all $p\in {\rm Sing}(X)$, $C(X, p)$ is the union of $m(X,p)$ different complex lines passing through the origin and $C(X, \infty)$ is the union of $d$ different complex lines passing through the origin.
\end{itemize}
\end{proposition}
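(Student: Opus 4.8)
The plan is to prove the chain of implications $(1)\Rightarrow(2)\Rightarrow(3)\Rightarrow(1)$, exploiting the fact that all three of the listed conditions are already essentially equivalent characterizations assembled from results proved earlier in the paper. The implication $(2)\Rightarrow(1)$ is immediate since $X$ being LNE at every point of ${\rm Sing}(X)\cup\{\text{infinity}\}$ combined with the local conical structure (Proposition \ref{propconicalstru}) gives, via a standard patching argument over the finitely many singular points and the single end region, that $X$ is globally LNE; alternatively one invokes directly that LNE is a local-plus-at-infinity property for algebraic curves. The implication $(1)\Rightarrow(2)$ is trivial, since a neighbourhood of any point (or of infinity) inside a globally $C$-LNE set is $C$-LNE.

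\textbf{The core equivalence $(2)\Leftrightarrow(3)$.} The substantive content is the equivalence of $(2)$ and $(3)$, which should be read as the union of two facts: a \emph{local} statement at each singular point $p$, and a statement \emph{at infinity}. For the local statement, the claim is that the germ $(X,p)$ is LNE if and only if $C(X,p)$ is the union of $m(X,p)$ distinct complex lines through the origin; as noted in the excerpt this is classical (Denkowski–Tibăr, and it also follows from the Neumann–Pichon classification, with earlier input from Pham–Teissier and Fernandes). For the statement at infinity, the claim is that $X$ is LNE at infinity if and only if $C(X,\infty)$ is the union of $d=\deg(X)$ distinct complex lines through the origin. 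Here I would argue as follows. First, by \cite[Theorem 2.1]{FernandesJ:2023}, $X$ is outer lipeomorphic at infinity to a plane curve $A\subset\C^2$, and an outer lipeomorphism at infinity preserves both the LNE-at-infinity property and the degree (the latter via Proposition \ref{Lip_implies_bs} and Proposition \ref{invariance_of_degree}, which together show degree is a blow-spherical-at-infinity invariant, hence an outer-lipeomorphism-at-infinity invariant) and the tangent cone at infinity up to the induced homeomorphism $d_\infty h$ on cones. So it suffices to prove the equivalence for plane curves $A$. For plane curves, one direction follows from the Dias–Ribeiro theorem (\cite[Theorem 1.4]{DiasR:2022}): LNE at infinity forces $\deg(A)=\deg(C(A,\infty))$, and since $C(A,\infty)$ is always a union of lines with multiplicities summing (with multiplicity) to $\deg A$, having degree equal to that of its tangent cone means all multiplicities are $1$, i.e.\ $C(A,\infty)$ is $d$ distinct lines. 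The converse direction — that $d$ distinct lines at infinity force LNE at infinity — I would obtain from Theorem \ref{atinfinity}: when $C(A,\infty)$ consists of $d$ distinct lines, at each point at infinity $p$ the germ $(\overline A\cup L_\infty, p)$ is a union of branches meeting $L_\infty$ transversally and pairwise transversally, which is precisely the embedded topological type making $(\overline A\cup L_\infty,p)$ LNE there; comparing with a normal-crossings model and applying Theorem \ref{atinfinity} yields that $A$ is outer lipeomorphic at infinity to an LNE model, hence LNE at infinity.

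\textbf{Assembling the global statement.} Finally I would assemble: $X$ is LNE $\iff$ $X$ is LNE at each $p\in{\rm Sing}(X)$ and at infinity (the local-to-global step, using the conical structure at infinity and at each singular point, together with the fact that $X\setminus({\rm Sing}(X)\cup\{\text{large ball}\})$ is a smooth curve which is automatically LNE on compact pieces) $\iff$ the stated condition on tangent cones holds at every singular point and at infinity. I expect the main obstacle to be the careful justification of the \emph{converse} direction at infinity — that $d$ distinct lines in $C(X,\infty)$ implies LNE at infinity — since this requires translating the tangent-cone condition into a statement about the embedded topology of $\overline X\cup L_\infty$ at each point at infinity and then invoking Theorem \ref{atinfinity} together with the reduction to plane curves; one must check that the reduction to a plane curve $A$ preserves not merely "LNE at infinity" in one direction but the full two-way equivalence, which is why routing everything through the degree invariant and Theorem \ref{atinfinity} (rather than through a direct estimate) is the cleaner path. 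The local-to-global patching is routine but should be stated, as it is the step that upgrades the pointwise conditions in $(3)$ to the global LNE constant in $(1)$.
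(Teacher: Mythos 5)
Your proposal matches the paper's argument: the paper also presents this proposition as an assembly of the classical local characterization at singular points (Denkowski--Tib\u{a}r, Neumann--Pichon), the characterization at infinity obtained by reducing to plane curves via \cite[Theorem 2.1]{FernandesJ:2023} and invoking Theorem \ref{atinfinity} (with Dias--Ribeiro giving the forward direction), and an implicit local-to-global patching. The paper states this only as ``an easy consequence of the above results,'' so your write-up is simply a more explicit version of the same route.
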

An equivalent characterization appeared in the Ph.D. thesis of da Costa \cite{Costa:2023}. Apparently, the characterization in \cite{Costa:2023} is independent of the results of Targino, since there is no citation to \cite{Targino:2022}.

Since blow-spherical homeomorphisms detect the item (3) in the above proposition, we have the following:

\begin{proposition}\label{prop:charac_bs_lne}
Let $X \subset \C^n$ be a complex algebraic curve. Then $X$ is LNE if and only if it is blow-spherical homeomorphic to a complex algebraic curve which is LNE.
\end{proposition}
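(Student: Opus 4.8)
The plan is to prove this as a corollary of the characterization of LNE curves established immediately above, together with the global classification Theorem \ref{main_theorem} and its invariance content. The key point is that the property ``being LNE'' is, for complex algebraic curves, equivalent to a purely combinatorial/metric condition on tangent cones at singular points and at infinity, namely item (3) of the preceding proposition, and this condition is precisely what is encoded by the relative multiplicities $k(X,p)$ recorded in the complex blow-spherical tree.

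First I would observe that one direction is trivial: if $X$ is LNE, then $X$ is blow-spherical homeomorphic to itself (via the identity, which is a blow-spherical homeomorphism by the first Remark after Definition \ref{def:strong_diffeo}), and $X$ is LNE, so $X$ is blow-spherical homeomorphic to an LNE curve. For the converse, suppose $\varphi\colon X\to Y$ is a blow-spherical homeomorphism with $Y$ an LNE complex algebraic curve. By the preceding proposition applied to $Y$, for every $q\in{\rm Sing}(Y)$ the tangent cone $C(Y,q)$ is a union of $m(Y,q)$ distinct complex lines and $C(Y,\infty)$ is a union of $\deg(Y)$ distinct complex lines. I must transfer each of these conditions back to $X$. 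For the singular points: by the invariance properties of blow-spherical homeomorphisms (Corollary 6.2 in \cite{Sampaio:2020}, as used in the proof of Theorem \ref{main_theorem}), $\varphi$ carries ${\rm Sing}_1(X)$ bijectively onto ${\rm Sing}_1(Y)={\rm Sing}(Y)$, and for each singular point $p$ of $X$ with image $q$, the local blow-spherical data — in particular the number of irreducible components of the tangent cone and the relative multiplicities $k(X,p)$ — agree with those of $(Y,q)$ by Proposition \ref{propinvarianciamultiplicidaderelativa} and Proposition \ref{coneinvarianteblow}. The condition ``$C(Y,q)$ is $m(Y,q)$ distinct lines'' is equivalent to saying all relative multiplicities $k_{Y,q}(Y_i)$ equal $1$ and the number of tangent line components equals the local multiplicity $m(Y,q)$; since $m(Y,q)=\sum_i k_{Y,q}(Y_i)$ (the local analogue of Remark \ref{remarkdegree}), having all relative multiplicities equal to $1$ is exactly the statement that the number of components equals $m$. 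Hence $C(X,p)$ also has all relative multiplicities equal to $1$, i.e. is a union of $m(X,p)$ distinct complex lines.

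For the behaviour at infinity: by Proposition \ref{invariance_of_degree} we have $\deg(X)=\deg(Y)$, and by Proposition \ref{coneinvarianteblow} together with Proposition \ref{propinvarianciamultiplicidaderelativa}, $C(X,\infty)$ and $C(Y,\infty)$ are blow-spherical homeomorphic with matching relative multiplicities; since $C(Y,\infty)$ is $\deg(Y)$ distinct lines (all relative multiplicities $1$), Remark \ref{remarkdegree} forces all of $k_{X,\infty}(X_i)$ to equal $1$ and the number of components of $C(X,\infty)$ to equal $\deg(X)$, i.e. $C(X,\infty)$ is a union of $\deg(X)$ distinct complex lines. Thus $X$ satisfies item (3) of the preceding proposition, and therefore $X$ is LNE. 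One should also note that the preceding proposition as stated assumes $X$ connected (equivalently irreducible, for curves); the general case follows by applying the argument component by component, using that a blow-spherical homeomorphism carries irreducible components to irreducible components (as in the proof of Theorem \ref{main_theorem}) and that a finite union of curves whose pairwise... more precisely, one uses that LNE can be checked locally and at infinity, which is the content of the equivalence $(1)\Leftrightarrow(2)$ in the preceding proposition.

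I expect the main obstacle to be purely expository rather than mathematical: the whole content is already packaged in the preceding characterization proposition plus the invariance results, so the real work is to make the translation ``relative multiplicities are preserved, and all being $1$ characterizes LNE'' clean and to handle the connectedness hypothesis correctly. No genuinely new estimate or construction is needed.
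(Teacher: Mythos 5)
Your argument is correct and is essentially the paper's own proof: the paper justifies this proposition in one line by observing that blow-spherical homeomorphisms detect item (3) of the preceding characterization, which is exactly the content you spell out via the invariance of tangent cones, relative multiplicities (all equal to $1$ being equivalent to the ``$m(X,p)$ distinct lines'' and ``$\deg(X)$ distinct lines'' conditions), and the correspondence of singular points. Your expanded version, including the reduction of the LNE condition to ``all relative multiplicities equal $1$'' via $m(X,p)=\sum_i k_{X,p}(X_i)$ and $\deg(X)=\sum_i k_{X,\infty}(X_i)$, fills in the details the paper leaves implicit and matches its intended route.
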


Let us remind the following result proved in \cite{FernandesS:2022}:

\begin{theorem}[Corollary 5.6 in \cite{FernandesS:2022}]
Let $X,Y\subset \mathbb{C}^n$ be two connected complex algebraic curves. Then, the following statements are equivalent:
\begin{itemize}
 \item [(1)] $X$ and $Y$ are homeomorphic;
 \item [(2)] $X$ and $Y$ are inner lipeomorphic.
\end{itemize}
\end{theorem}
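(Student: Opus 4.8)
The plan is to dispose of the trivial implication and then reduce the substantial one to the inner Lipschitz classification of surfaces with isolated singularities. The implication $(2)\Rightarrow(1)$ needs no argument: by Definition \ref{lipschitz function} an inner lipeomorphism is in particular a homeomorphism. For the converse I would first observe that a connected complex algebraic curve $X\subset\C^n$ is, as a real set, a connected two-dimensional semialgebraic set whose only non-$C^1$ points (its $\mathrm{Sing}_1(X)$) are the isolated complex singular points. Thus $X$ and $Y$ are precisely objects of the type classified under inner lipeomorphisms by the first author and Fernandes in \cite{FernandesS:2022}, and the whole problem becomes the assertion that \emph{for complex curves the inner Lipschitz classification collapses onto the topological one}.

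The heart of the argument is that every piece of metric data entering that classification is already a topological invariant of a complex curve. Locally I would invoke the metric conicalness of complex curve germs (as underlying the classification of Pham--Teissier \cite{P-T}, Fernandes \cite{Fernandes:2003} and Neumann--Pichon \cite{N-P}): through a Puiseux parametrization each irreducible germ becomes inner bi-Lipschitz to the flat metric cone over a circle of length $2\pi$, i.e.\ to a standard disk, so a germ with $r$ branches is inner bi-Lipschitz to $r$ such disks glued at their centers. Consequently the inner Lipschitz type of $(X,p)$ depends only on the number of branches $r(X,p)$, which equals the number of connected components of the link and is therefore topological; in particular a one-branch point (a cusp) is inner bi-Lipschitz to a smooth disk and is invisible both topologically and metrically. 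At infinity the same phenomenon holds: each end of a complex curve is inner bi-Lipschitz to a single standard half-infinite flat cone, so all ends are mutually inner bi-Lipschitz and only their number matters, again a topological datum. Equivalently, the normalization $\widetilde X\to X$ is a finite bijective map, a homeomorphism off the singular set, which distorts inner distances by at most a bi-Lipschitz constant, so the inner geometry of $X$ is captured by the smooth affine curve $\widetilde X$ together with the purely combinatorial gluing pattern over the non-manifold points.

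With this in hand I would assemble the global inner lipeomorphism exactly as in the proof of Theorem \ref{main_theorem}. The existence of a homeomorphism $X\to Y$ forces the two curves to share all the relevant topological invariants: the irreducible components are matched as the connected components of the manifold loci, hence their normalizations have equal genus and equal numbers of ends; the non-manifold points are matched together with their numbers of branches; and the numbers of ends at infinity agree. By the previous paragraph all the attendant local cone models and end models then coincide. Choosing inner lipeomorphisms between the corresponding local disk-bouquets, between the flat cone-ends, and filling in over each compact core by the classification of compact surfaces with boundary together with the isotopy extension argument (Theorem 3.3 in \cite{Hirsch}) used for Theorem \ref{main_theorem}, produces a global inner lipeomorphism $X\to Y$. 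The main obstacle is precisely the last gluing: one must patch the local flat-cone models to the smooth core across the annular transition regions without degrading the Lipschitz constant, i.e.\ keep uniform control of the inner metric where the models are interpolated. This uniform control is exactly what metric conicalness secures — it reduces every local germ to a flat disk and every end to a flat cone — and it is the technical content supplied by the classification in \cite{FernandesS:2022}, which yields $(1)\Rightarrow(2)$ and closes the equivalence.
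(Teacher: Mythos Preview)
The paper does not prove this statement: it is quoted as Corollary~5.6 of \cite{FernandesS:2022} and invoked as a black box in Section~\ref{sec:app_lne}. There is therefore no in-paper proof to compare against; your proposal supplies an outline of what the paper deliberately outsources, and indeed your closing sentence also defers the decisive uniform estimate to \cite{FernandesS:2022}.

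Your sketch is a faithful summary of the mechanism behind that result: metric conicalness of complex curve germs forces the local inner type at $p$ to depend only on the branch count $r(X,p)$, Puiseux at infinity does the same for ends, and the compact core is handled by the classification of orientable surfaces with boundary plus isotopy extension, exactly as in the proof of Theorem~\ref{main_theorem}. All of this is correct in outline.

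One slip worth fixing: the normalization $\widetilde X\to X$ is \emph{not} bijective. It is one-to-one over the smooth locus but has $r(X,p)$ preimages over each singular point $p$, so it is neither a bijection nor a homeomorphism and does not by itself exhibit an inner lipeomorphism $\widetilde X\simeq X$. The correct statement is that the inner metric on $X$ is recovered from that on the smooth curve $\widetilde X$ by identifying these finite fibers, which is precisely what makes the gluing pattern ``purely combinatorial'' as you say. Your main line of argument does not actually rely on the mis-stated version, so this only affects the parenthetical ``Equivalently,\ldots'' sentence, which should be corrected or dropped.
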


As a consequence, we obtain the following characterization:
\begin{corollary}\label{cor:bs_lipeo}
Let $X,Y\subset \mathbb{C}^n$ be two LNE complex algebraic curves. Then, the following statements are equivalent:
\begin{itemize}
 \item [(1)] $X$ and $Y$ are homeomorphic;
 \item [(2)] $X$ and $Y$ are outer lipeomorphic;
 \item [(3)] $X$ and $Y$ are blow-spherical homeomorphic.
\end{itemize}
\end{corollary}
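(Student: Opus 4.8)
The plan is to close the cycle of implications $(2)\Rightarrow(3)\Rightarrow(1)\Rightarrow(2)$, using the results already assembled. The implication $(2)\Rightarrow(3)$ is immediate from Proposition \ref{Lip_implies_bs}: any semialgebraic outer lipeomorphism is a blow-spherical homeomorphism, and an outer lipeomorphism between complex algebraic curves is automatically semialgebraic (being a regular map of bounded dilatation, or simply by invoking that algebraic sets and the graph of an outer lipeomorphism between them are semialgebraic). The implication $(3)\Rightarrow(1)$ is trivial, since a blow-spherical homeomorphism is in particular a homeomorphism. So the only substantive step is $(1)\Rightarrow(2)$.

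For $(1)\Rightarrow(2)$, suppose $X$ and $Y$ are homeomorphic LNE complex algebraic curves. Since they are homeomorphic (hence have the same number of connected components) I may reduce, component by component, to the connected case; note each component of an LNE set need not be LNE in general, but here one can instead argue globally. The key is Proposition \ref{prop:charac_bs_lne}: being LNE is detected by item (3) of the preceding proposition, i.e.\ by the tangent cones at the singular points and at infinity being reduced unions of the appropriate number of complex lines. A homeomorphism between the curves, together with the fact that the topological type at a singular point determines whether the local tangent cone is a transverse union of lines (this is exactly the local LNE characterization via \cite{DenkowskiT:2019} or \cite{N-P}), forces the two curves to have isomorphic complex blow-spherical trees: the Euler characteristics of the irreducible components match (homeomorphism), the local multiplicity data match (LNE makes all relative multiplicities equal to $1$ and the number of branches equal to the multiplicity), and likewise at infinity via Theorem \ref{atinfinity} and the degree computation. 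Once the complex blow-spherical trees are isomorphic, Theorem \ref{main_theorem} yields a (strong) blow-spherical homeomorphism $\varphi\colon X\to Y$. It then remains to upgrade this to an outer lipeomorphism: but for LNE curves this is automatic, because on a curve LNE means inner and outer metrics are bi-Lipschitz equivalent, so an inner lipeomorphism is an outer lipeomorphism, and by the cited Corollary 5.6 of \cite{FernandesS:2022} homeomorphic complex algebraic curves are inner lipeomorphic. Hence $X$ and $Y$ are outer lipeomorphic.

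Actually the cleanest route avoids Theorem \ref{main_theorem} altogether: from $(1)$ and Corollary 5.6 of \cite{FernandesS:2022}, $X$ and $Y$ are inner lipeomorphic; composing with the LNE inequalities $d_{X,\mathrm{inn}}\le C\|\cdot\|$ and $d_{Y,\mathrm{inn}}\le C'\|\cdot\|$ (and the trivial reverse inequalities) shows the same map is an outer lipeomorphism, giving $(1)\Rightarrow(2)$ directly. I would present this short argument as the main line and mention the tree-isomorphism route only as a remark.

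The main obstacle is bookkeeping rather than mathematics: one must be careful that "LNE" as a global hypothesis really does give a uniform constant comparing inner and outer distances on all of $X$ (not just near each singular point and near infinity separately), which is where the connectedness/finiteness of the curve and Proposition \ref{propconicalstru} on conical structure at infinity are used to patch the local constants into a global one; and one must check that the inner lipeomorphism produced by \cite{FernandesS:2022} is genuinely bi-Lipschitz for the inner metrics on both sides so that conjugating by the LNE comparisons is legitimate. These are routine but should be stated explicitly.
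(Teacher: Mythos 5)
Your overall architecture is right, and the one substantive implication, $(1)\Rightarrow(2)$, is handled exactly as the paper intends: Corollary 5.6 of \cite{FernandesS:2022} upgrades the homeomorphism to an inner lipeomorphism, and the LNE hypothesis on both curves (which by Definition \ref{def:lne} already comes with a single global constant, so no patching via Proposition \ref{propconicalstru} is needed) together with the trivial inequality $\|x_1-x_2\|\le d_{X,inn}(x_1,x_2)$ shows that the same map is an outer lipeomorphism. Note also that LNE forces connectedness (points in distinct components are at infinite inner distance), so your component-by-component worry is moot.

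The genuine gap is in your justification of $(2)\Rightarrow(3)$. The claim that an outer lipeomorphism between complex algebraic curves is automatically semialgebraic is false: such a map need not be regular, and the graph of a Lipschitz map between semialgebraic sets need not be semialgebraic --- for instance $(x,y)\mapsto (x+\frac{1}{2}\sin x,\, y)$ is a non-semialgebraic bi-Lipschitz self-map of $\C\cong\R^2$. Since Proposition \ref{Lip_implies_bs} is stated only for \emph{semialgebraic} outer lipeomorphisms, your argument as written does not apply to an arbitrary outer lipeomorphism, and this is the only support you give for that leg of the cycle. The implication is nevertheless true and you already have the tools to repair it: since $(2)\Rightarrow(1)$ is trivial, it suffices to prove $(1)\Rightarrow(3)$, which follows either (a) by observing that the outer lipeomorphism produced in your $(1)\Rightarrow(2)$ step can be taken semialgebraic (the construction in \cite{FernandesS:2022} is semialgebraic) and applying Proposition \ref{Lip_implies_bs} to that specific map, or (b) by your alternative route through Theorem \ref{main_theorem}: for LNE curves all relative multiplicities equal $1$ and the number of tangent lines at each singular point (resp.\ at infinity) equals the number of local branches (resp.\ of ends), so the complex blow-spherical tree is determined by topological data and a homeomorphism forces the trees to be isomorphic. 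One of these repairs should be made explicit; as written, $(2)\Rightarrow(3)$ rests on a false statement.
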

Let us remark that $(1)\Leftrightarrow (2)$ is also a consequence of the global classification in \cite{Targino:2022}, since the proof of the following result is an easy adaptation of the proof of the global classification in \cite{Targino:2022}:
 
\begin{proposition}\label{global}
Let $C$ and $\Gamma$ be two irreducible complex algebraic curves in $\C^n$. Then, $C$ and $\Gamma$ are outer lipeomorphic if and only if we have the following:
   \begin{enumerate}
     \item $C$ and $\Gamma$ are homeomorphic;
     \item $C$ and $\Gamma$ outer lipeomorphic at infinity and 
     \item there is a bijection $\varphi\colon {\rm Sing}(X)\to {\rm Sing}(Y)$ such that the germs $(C,p)$ and $(\Gamma,\varphi(p))$ are outer lipeomorphic for all $p\in {\rm Sing}(X)$.
 \end{enumerate}
\end{proposition}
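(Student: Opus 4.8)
The plan is to prove the nontrivial (``if'') direction by the cut-and-paste construction used in the proof of Theorem~\ref{main_theorem}, carried out with bi-Lipschitz (rather than merely blow-spherical) pieces; this is the ``easy adaptation of \cite{Targino:2022}'' referred to in the statement. The converse is immediate: if $\Phi\colon C\to\Gamma$ is an outer lipeomorphism then $\Phi$ is a homeomorphism, so (1) holds; since $C$ is an algebraic curve, $C\cap\overline{B_R(0)}$ is compact, and $\Phi$ restricts to an outer lipeomorphism from $C\setminus\overline{B_R(0)}$ onto $\Gamma\setminus\Phi(C\cap\overline{B_R(0)})$, so (2) holds; and because being locally a $C^1$-submanifold is preserved under lipeomorphisms (see \cite{Sampaio:2016, BirbrairFLS:2016}), $\Phi$ carries ${\rm Sing}(C)$ bijectively onto ${\rm Sing}(\Gamma)$ and its germ at each singular point is an outer lipeomorphism, so (3) holds.

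For the forward direction, write ${\rm Sing}(C)=\{p_1,\dots,p_s\}$ and $\widetilde p_j=\varphi(p_j)$. First I would fix the local data and the data at infinity: by (3) choose, for each $j$, an outer lipeomorphism $g_j\colon C\cap\overline{B_\epsilon(p_j)}\to\Gamma\cap\overline{B_\epsilon(\widetilde p_j)}$ with $\epsilon>0$ small enough that the balls $B_\epsilon(p_j)$ are pairwise disjoint and that each curve is the disjoint union of its branches on the corresponding punctured balls; by (2) choose $R>0$ large and an outer lipeomorphism $h\colon C\setminus B_R(0)\to\Gamma\setminus B_R(0)$. Exactly as in the proof of Theorem~\ref{main_theorem} one arranges that $h$ and the $g_j$ restrict to diffeomorphisms between the corresponding boundary spheres.

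Next I would form the compact surfaces with boundary
\[
M=\Bigl(C\cap\overline{B_R(0)}\Bigr)\setminus\bigcup_{j=1}^{s}B_\epsilon(p_j),\qquad
N=\Bigl(\Gamma\cap\overline{B_R(0)}\Bigr)\setminus\bigcup_{j=1}^{s}B_\epsilon(\widetilde p_j).
\]
Since $C$ and $\Gamma$ are irreducible, $M$ and $N$ are connected, compact, orientable surfaces with boundary, and this is where hypothesis (1) is used: together with (2) and (3) it forces $M$ and $N$ to have the same number of boundary circles and the same Euler characteristic, hence the same genus. The maps $h$ and the $g_j$ glue to a diffeomorphism $f\colon\partial M\to\partial N$, which extends to a diffeomorphism $F_0\colon M\to N$ by the classification of compact surfaces; using a collar of $\partial M$ and the isotopy extension theorem (Theorem~3.3 in \cite{Hirsch}) together with a bump-function reparametrisation verbatim as in the proof of Theorem~\ref{main_theorem}, one modifies $F_0$ on a compact collar to a diffeomorphism $F\colon M\to N$ which coincides with $h$ near the outer boundary and with $g_j$ near the $j$-th inner boundary. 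Then $\Phi\colon C\to\Gamma$ defined by $\Phi=g_j$ on $C\cap\overline{B_\epsilon(p_j)}$, $\Phi=h$ on $C\setminus B_R(0)$ and $\Phi=F$ on $M$ is a well-defined homeomorphism.

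The hard part is to check that $\Phi$ is bi-Lipschitz. It is bi-Lipschitz on $C\setminus B_R(0)$ by (2), on each $C\cap\overline{B_\epsilon(p_j)}$ by (3), and on the compact manifold $M$ it is a smooth map with smooth inverse, hence Lipschitz with Lipschitz inverse. To pass from these pieces to a global bound one must (i) perform the collar interpolation above so that near the seams $\{\|z\|=R\}$ and $\{\|z-p_j\|=\epsilon\}$ the map $\Phi$ arises from $h$, $g_j$ and $F_0$ through a smooth homotopy supported in a compact annulus, so that no Lipschitz constant degenerates there, and (ii) observe that a homeomorphism of $C$ which is proper and bi-Lipschitz on each member of the finite cover $\{C\setminus B_R(0),\,M,\,C\cap\overline{B_\epsilon(p_j)}\}$ is globally bi-Lipschitz; for curves the latter follows by comparing with the inner metric and using that, near each singular point and near infinity, $C$ is a finite union of bi-Lipschitz parametrised arcs. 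Once (i) and (ii) are arranged, $\Phi$ is the desired outer lipeomorphism. Since every step takes place intrinsically on the curves, the arbitrary ambient dimension causes no difficulty, which is precisely why the argument is a routine adaptation of the plane-curve case treated in \cite{Targino:2022}.
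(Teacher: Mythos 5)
The paper itself gives no proof of Proposition~\ref{global}: it is stated as a consequence whose proof is ``an easy adaptation'' of \cite{Targino:2022}, and your cut-and-paste scheme modelled on the proof of Theorem~\ref{main_theorem} is indeed the intended route. Your ``only if'' direction, the connectedness of $M$ and $N$, the matching of boundary circles via (2) and (3), and the Euler-characteristic count $\chi(M)=\chi(C)-s=\chi(\Gamma)-s=\chi(N)$ using (1) are all fine. But there is one step that fails as written. In the proof of Theorem~\ref{main_theorem} the maps $g_j$ and $h$ are \emph{constructed} to be strong blow-spherical homeomorphisms, hence $C^1$ diffeomorphisms off the singular set, so their restrictions to the seam circles $\{\|z-p_j\|=\epsilon\}$ and $\{\|z\|=R\}$ are diffeomorphisms and \cite[Theorem 3.3]{Hirsch} applies. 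In Proposition~\ref{global}, by contrast, $g_j$ and $h$ are handed to you by hypotheses (2) and (3) as arbitrary outer lipeomorphisms, and a bi-Lipschitz homeomorphism of a circle need not be differentiable anywhere; so the sentence ``exactly as in the proof of Theorem~\ref{main_theorem} one arranges that $h$ and the $g_j$ restrict to diffeomorphisms between the corresponding boundary spheres'' is unjustified, and the subsequent appeal to the smooth isotopy extension theorem does not apply. You must either (a) first replace the given $g_j$ and $h$ by outer lipeomorphisms in the same class that are real-analytic off the singular points --- possible because the local and at-infinity outer Lipschitz types of complex curves are realized by Puiseux-type models, as in Claim~\ref{claim:propequivblowdeg} and in \cite{N-P}, \cite{Fernandes:2003}, \cite{Targino:2022} --- or (b) give up smoothness and interpolate on a collar between the bi-Lipschitz boundary map and a diffeomorphism of $M$ by lifting each circle homeomorphism to $\R$ and taking convex combinations of the lifts, which produces a bi-Lipschitz (not smooth) isotopy with uniformly controlled constants. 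Choosing the models in (a) holomorphic off the singular points also disposes of the orientation-compatibility of the glued boundary map $f\colon\partial M\to\partial N$, which your argument (like the paper's own proof of Theorem~\ref{main_theorem}) leaves implicit.

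A smaller but still real soft spot is your step (ii). The assertion that a homeomorphism which is bi-Lipschitz on each member of a finite closed cover is globally bi-Lipschitz is false for general sets, and ``comparing with the inner metric'' is not available here because $C$ need not be LNE. The correct argument is local at the seams: for $x\in M$ and $y\in C\cap B_\epsilon(p_j)$ on possibly different branches with $\|x-y\|$ small, project $x$ and $y$ along their branches to points $x',y'$ on the seam sphere (each branch is an LNE Lipschitz graph over its tangent line near the seam, so $\|x-x'\|+\|y-y'\|\lesssim\|x-y\|$), and use that $F$ and $g_j$ agree on the seam together with the triangle inequality; similarly at $\{\|z\|=R\}$, and separately bound the pairs with $\|x-y\|$ bounded below by compactness and the linear growth of $h$ at infinity. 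With (a) or (b) above and this seam estimate made explicit, your proof is complete and is the argument the authors have in mind.
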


\noindent{\bf Acknowledgements}. The authors would like to thank Alexandre Fernandes for his interest in this research.

\end{document}